\newcommand{\toepop}{{\mathcal T}}
\newcommand{\C}{{\mathbb C}}
\newcommand{\R}{{\mathbb R}}
\newcommand{\N}{{\mathbb N}}
\newcommand{\calP}{{\mathcal P}}
\DeclareMathOperator{\diag}{diag}
\DeclareMathOperator{\rank}{rank}
\DeclareMathOperator{\range}{ran}
\DeclareMathOperator{\real}{Re}
\DeclareMathOperator{\imag}{Im}
\DeclareMathOperator{\expmt}{expmt}
\DeclareMathOperator{\expm}{expm}
\newcommand{\norm}[1]{\left\lVert #1 \right\rVert}
\newcommand{\unorm}[1]{\lVert #1 \rVert}
\newcommand{\setunion}{\ensuremath{\,\cup\,}}
\newcommand{\defby}{\mathrel{\mathop:}=}
\newcommand{\bydef}{=\mathrel{\mathop:}}
\DeclareMathOperator{\GL}{GL}
\newcommand{\smb}{\left[\begin{smallmatrix}}
\newcommand{\sme}{\end{smallmatrix}\right]}
\newcommand{\OOM}{\mathcal{O}}
\newtheorem{theorem}{Theorem}[section]
\newtheorem{lemma}[theorem]{Lemma}
\newtheorem{corollary}[theorem]{Corollary}
\newtheorem{definition}[theorem]{Definition}
\newtheorem{example}[theorem]{Example}
\newtheorem{remark}[theorem]{Remark}
\newcommand{\titlestrlong}{Fast computation of the matrix exponential for a Toeplitz matrix}
\newcommand{\titlestrshort}{Fast Toeplitz matrix exponential}
\newcommand{\authorstr}{Daniel Kressner and Robert Luce}
\headers{\titlestrshort}{\authorstr}
\newcommand{\email}[1]{{\ttfamily #1}}
\date{}
\title{\titlestrlong}
\author{
  Daniel Kressner\thanks{\'Ecole Polytechnique F\'ed\'erale de Lausanne,
    Station 8, 1015 Lausanne, Switzerland
    (\email{daniel.kressner@epfl.ch}, \url{http://anchp.epfl.ch}).}
  \and
  Robert Luce\thanks{\'Ecole Polytechnique F\'ed\'erale de Lausanne,
    Station 8, 1015 Lausanne, Switzerland
    (\email{robert.luce@epfl.ch}, \url{http://people.epfl.ch/robert.luce}).}
}
\begin{document}

\maketitle

\begin{abstract}
The computation of the matrix exponential is a ubiquitous operation in
numerical mathematics, and for a general, unstructured $n\times n$
matrix it can be computed in $\mathcal{O}(n^3)$ operations.  An
interesting problem arises if the input matrix is a \emph{Toeplitz
matrix}, for example as the result of discretizing integral equations
with a time invariant kernel.  In this case it is not obvious how to
take advantage of the Toeplitz structure, as the exponential of a
Toeplitz matrix is, in general, not a Toeplitz matrix itself. The main
contribution of this work are fast algorithms for the computation of
the Toeplitz matrix exponential.  The algorithms have provable
\emph{quadratic complexity} if the spectrum is real, or sectorial, or
more generally, if the imaginary parts of the rightmost eigenvalues do
not vary too much.  They may be efficient even outside these spectral
constraints.  They are based on the scaling and squaring framework, and
their analysis connects classical results from rational approximation
theory to matrices of low displacement rank.  As an example, the
developed methods are applied to Merton's jump-diffusion model for
option pricing.

\end{abstract}

\opt{siam}{
\begin{keywords}
Toeplitz matrix, matrix exponential, displacement structure, rational
approximation, option pricing
\end{keywords}
\begin{AMS}
    65F60, 15B05, 65Y20
\end{AMS}
}

\section{Introduction}

Let us consider an $n\times n$ Toeplitz matrix
\begin{equation} \label{eq:toeplitz}
 T = \begin{bmatrix}
    t_0 & t_{-1} & \cdots & t_{-n+1} \\
    t_1 & t_0 & \ddots & \vdots \\
    \vdots & \ddots & \ddots & t_{-1} \\
    t_{n-1} & \cdots & t_1 & t_0
\end{bmatrix}.
\end{equation}
In this work, we propose a new class of fast algorithms for computing a highly accurate approximation of the matrix exponential $\exp(T)$. 
An important source of applications for $\exp(T)$ arises from the discretization of integro-differential equations with a shift-invariant kernel. Such equations play a central role in, e.g., the pricing of single-asset options modelled by jump-diffusion processes~\cite{Duffy2006,Sachs2008}. 
The related problem of computing the exponential of a block Toeplitz matrix appears in the Erlangian approximation of Markovian fluid queues~\cite{Bini2015}.

It is well known that the multiplication of a Toeplitz matrix with a
vector can be implemented in $\OOM(n\log n)$ operations, using the FFT. This suggests the use of a Krylov subspace method, such as the Lanczos method, for computing the product of $\exp(T)$ with a vector $b$; see, e.g.,~\cite{Ng2000}. For a matrix $T$ of large norm, the Krylov subspace method can be expected to converge slowly~\cite{Hochbruck1997}. In this case, the use of rational Krylov subspace methods is advisable. For example, Lee, Pang, and Sun~\cite{Lee2010} have suggested a shift-and-invert Arnoldi method for approximating $\exp(T)b$. Every step of this method requires the solution of a linear system with a Toeplitz matrix. The fast and superfast solution of such linear systems has received broad attention in the literature; we refer to~\cite{Heinig1984,Kailath1999,Olshevsky2001a,Olshevsky2001b} for overviews. Recent work in this direction includes an algorithm based on a combination of rank structured matrices and randomized sampling~\cite{Xia2012}.

If, additionally, $T$ is upper triangular then $T^2$ and, more generally, any matrix function of $T$ is again an upper triangular Toeplitz matrix. This very desirable property allows for the design of efficient algorithms that directly aim at the computation of generators for $\exp(T)$; see~\cite{Bini2015} and the references therein. It is important to note that this property does not extend to general Toeplitz matrices.

The approach proposed in this work is different from existing
approaches, because it aims at approximating the full matrix
exponential $\exp(T)$, instead of $\exp(T)b$, and it does not impose
additional structure on $T$. Our approach is based on a combination of
the scaling and squaring method for the matrix exponential of
unstructured matrices~\cite{Guttel2016,Higham2008,Higham2009} with
approximations of low displacement rank~\cite{Kailath1999}.
Specifically, we show that the displacement rank of a rational
function of $T$ is bounded by the degree of the rational function. In
turn, we obtain an approximate representation of $\exp(T)$, which
requires $\OOM(n)$ storage under suitable assumptions and allows to
conveniently multiply $\exp(T)$ with a vector in $\OOM(n\log n)$
operations. The latter property is particularly interesting in option
pricing; it allows for quickly evaluating prices for times to maturity
that are integer multiplies of a fixed time period. The availability
of an approximation to the full matrix exponential also allows us to
quickly access parts of that matrix. For example, the diagonal entries
can be computed in $\OOM(n)$ operations, which would be significantly
more expensive using Krylov subspace methods.


\section{Toeplitz matrices} \label{sec:toeplitz}

In this section, we recall and establish basic properties of Toeplitz matrices needed for our developments.
Following~\cite{Kailath1995}, we define the displacement $\nabla_F(A)$ of $A\in \C^{n\times n}$ with respect to $F\in \C^{n\times n}$ as
\[
 \nabla_F(A) := A - F  A F^*.
\]
We will mostly use the downward shift matrix for $F$, in which case we
omit the subscript:
\[
 \nabla(A) = A - Z  A Z^*,
 \qquad Z =  \begin{bmatrix}
    0 &  \\
    1 & 0 & \\
    & \ddots & \ddots  \\
    & & 1 & 0
\end{bmatrix}.
\]
The rank of $\nabla(A)$ is called the \emph{displacement rank} of $A$. Toeplitz matrices have displacement rank at most two.
Matrices of ``small'' displacement rank are often called \emph{Toeplitz-like} matrices.
Given an invertible matrix $A$ with $\rank(\nabla(A)) \le r$, it follows that
$\rank(\nabla(A^{-1})) \le r + 2$. More generally, any Schur complement of
$A$ has bounded displacement rank, see~\cite[Thm.~2.2]{Kailath1995}.
These displacement rank properties are discussed with great detail in
Section~\ref{sec:toeplk_drank}.

It follows that the inverse of a Toeplitz matrix $T$ has displacement
rank at most $2$. This property does \emph{not} extend
to general matrix functions of $T$.  In particular, $\exp(T)$ usually
has full displacement rank. However, as we will see below in Section~\ref{sec:approxexponential}, it turns
out that matrix functions of $T$ can often be well approximated by a matrix of low displacement rank.

\subsection{Generators, reconstruction, and fast matrix-vector products}

For $r \ge \rank(\nabla(A))$, there are matrices $G,B \in
\C^{n\times r}$ such that
\begin{equation} \label{eq:stein}
  \nabla(A) = A - Z A Z^* = G B^*.
\end{equation}
We call such a pair $(G,B)$ a \emph{generator} for $A$. Note that
$A$ admits many generators and $(G,B)$ is called a
\emph{minimal} generator for $A$ if $r = \rank(\nabla(A))$. 
A generator for the Toeplitz matrix~\eqref{eq:toeplitz} is given by
\begin{equation} \label{eq:toeplitzgenerator}
  G = \begin{bmatrix}
      t_0 & 1 \\
      t_1 & 0 \\
      \vdots & \vdots \\
      t_{n-1} & 0
     \end{bmatrix}, \qquad 
B = \begin{bmatrix}
     1 & 0 \\
     0 & \bar t_{-1} \\
     \vdots & \vdots \\
     0 & \bar t_{-n+1}
    \end{bmatrix}.
\end{equation}

Fast algorithms for Toeplitz-like matrices operate
directly on the generator of $A$ instead of $A$ itself. When needed,
the full matrix can be reconstructed from the generators by
noting that~\eqref{eq:stein} is a matrix Stein equation admitting the
unique solution
\begin{equation} \label{eq:reconstruction}
  A = \toepop(G,B) := \sum_{k = 0}^{n-1} Z^k G B^* (Z^*)^k.
\end{equation}
Letting $g_j, b_j \in \C^n$ for $j = 1,\ldots, r$ denote the columns of $G,B$,
we can rewrite~\eqref{eq:reconstruction} as
\begin{equation}
\label{eq:disptoeplitz}
    A = \toepop(G,B) = L(g_1) U(b_1^*) + L(g_2) U(b_2^*) + \cdots  + L(g_r) U(b_r^*),
\end{equation}
with the triangular Toeplitz matrices
\begin{equation*}
 L(x):= \begin{bmatrix}
    x_1 & 0 & \cdots & 0 \\
    x_2 & x_1 & \ddots & \vdots \\
    \vdots & \ddots & \ddots & 0 \\
    x_{n} & \cdots & x_2 & x_1
\end{bmatrix}, \qquad 
U(x):=\begin{bmatrix}
    x_1 & x_{2} & \cdots & x_{n} \\
    0 & x_1 & \ddots & \vdots \\
    \vdots & \ddots & \ddots & x_{2} \\
    0 & \cdots & 0 & x_1
\end{bmatrix}.
\end{equation*}

Using the Fast Fourier Transform (FFT), the matrix-vector product with
a Toep\-litz matrix can be done in
$O(n\log n)$ operations; see, e.g.,~\cite[Sec. 10.2.3]{Bai2000}.
In turn,~\eqref{eq:disptoeplitz} shows that the
multiplication of $A$ with a vector can be computed in $O(r n\log n)$
operations; see~\cite[Chap.~1]{Kailath1999} for more details.

\subsection{Generator Truncation}

Operations like matrix addition or multiplication typically increase
the displacement rank of Toeplitz-like matrices.  Even worse, the
result of such operations may lead to non-minimal, that is, rank
deficient generators with many more columns than necessary. To
limit this increase in generator size, we will truncate the singular
values of the generators. The following results justify our
procedure.


\begin{lemma}
\label{lem:norm_bound}
The displacement $\nabla(A)$ for $A \in \C^{n\times n}$ satisfies
\begin{equation*}
    \frac{1}{2} \norm{\nabla(A)}_* \le \norm{A}_* \le n \norm{\nabla(A)}_*,
\end{equation*}
where $\norm{\cdot}_*$ denotes any unitarily invariant norm.
\end{lemma}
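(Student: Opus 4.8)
The plan is to prove the two inequalities separately, using only elementary properties of unitarily invariant norms. Throughout, let $\norm{\cdot}_2$ denote the spectral norm, and recall the standard bound $\norm{XBY}_* \le \norm{X}_2\,\norm{B}_*\,\norm{Y}_2$, valid for any unitarily invariant norm $\norm{\cdot}_*$ and matrices of compatible sizes; this is a consequence of von Neumann's characterization of $\norm{\cdot}_*$ as a symmetric gauge function of the singular values together with the inequality $\sigma_j(XBY) \le \norm{X}_2\norm{Y}_2\,\sigma_j(B)$. In particular, $\norm{CB}_* \le \norm{B}_*$ and $\norm{BC}_* \le \norm{B}_*$ whenever $C$ is a contraction. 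The only structural input needed is that the downward shift $Z$ is a contraction (indeed a partial isometry), so $\norm{Z^k}_2 \le 1$ for all $k \ge 0$, with $Z^n = 0$.

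For the lower bound, apply the triangle inequality directly to the defining relation $\nabla(A) = A - Z A Z^*$ and use that $Z$ and $Z^*$ are contractions:
\[
  \norm{\nabla(A)}_* \le \norm{A}_* + \norm{Z A Z^*}_* \le 2\,\norm{A}_*,
\]
which is exactly $\tfrac12\norm{\nabla(A)}_* \le \norm{A}_*$.

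For the upper bound, invoke the reconstruction identity~\eqref{eq:reconstruction}: since~\eqref{eq:stein} is a Stein equation whose unique solution is given by~\eqref{eq:reconstruction}, we have $A = \sum_{k=0}^{n-1} Z^k \nabla(A) (Z^*)^k$. Bounding each of the $n$ summands via the submultiplicativity bound,
\[
  \norm{Z^k \nabla(A) (Z^*)^k}_* \le \norm{Z^k}_2\,\norm{\nabla(A)}_*\,\norm{(Z^*)^k}_2 \le \norm{\nabla(A)}_*,
\]
and combining with the triangle inequality yields $\norm{A}_* \le n\,\norm{\nabla(A)}_*$.

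There is no genuine obstacle here; the one point requiring care is to use the submultiplicativity of $\norm{\cdot}_*$ against the spectral norm, rather than a coarser product bound, so that the shift factors drop out. Both constants are essentially optimal: for the nuclear norm and $n \ge 2$, the choice $A = e_1 e_1^*$ gives $\nabla(A) = e_1 e_1^* - e_2 e_2^*$, hence $\norm{\nabla(A)}_* = 2 = 2\norm{A}_*$, attaining the lower bound, while $A = I$ gives $\nabla(A) = e_1 e_1^*$ with $\norm{A}_* = n = n\,\norm{\nabla(A)}_*$, attaining the upper bound.
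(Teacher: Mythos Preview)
Your proof is correct and follows essentially the same route as the paper's: the lower bound comes from the triangle inequality applied to $\nabla(A)=A-ZAZ^*$ together with $\|Z\|_2\le 1$, and the upper bound from the reconstruction formula~\eqref{eq:reconstruction} bounded term by term via the spectral--unitarily-invariant submultiplicativity. Your added sharpness examples in the nuclear norm are a nice complement; the paper instead shows, in the example following the lemma, that the factor $n$ in the upper bound is asymptotically necessary for the spectral norm.
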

\begin{proof}
Note that $\norm{Z^{k}}_2 = 1$ for $0 \le k<n$.  The first inequality
follows directly from the definition of the displacement operator,
viz.
\begin{equation*}
    \norm{\nabla(A)}_*
    = \norm{A - Z A Z^*}_*
    \le \norm{A}_* + \norm{Z}_2 \norm{A}_* \norm{Z}_2
    \le 2 \norm{A}_*.
\end{equation*}
For the second inequality we compute
from~\eqref{eq:reconstruction} that
\begin{equation*}
    \norm{A}_* \le \sum_{k=0}^{n-1} \norm{Z^k \nabla(A) (Z^*)^k}_2
    \le \sum_{k=0}^{n-1} \norm{Z^k}_2 \norm{\nabla(A)}_* \norm{(Z^*)^k}_2
    = n \norm{\nabla(A)}_*.
\end{equation*}{}
\end{proof}

The bounds of Lemma~\ref{lem:norm_bound} may not be sharp. In particular, one may question whether the factor $n$ of the upper bound is necessary. The following example shows that this linear dependence on $n$ can, in general, not be removed.

\begin{example}
Let $g = [1,1,\dotsc,1]^* \in \R^n$ and $A = \toepop(gg^*)$. Then the $k$th entry of $f = Ag$ is given by
$f(k) = kn - k(k-1)/2$. Since $f$ is monotonically increasing, this allows us to estimate
\[
 \|f\|^2_2 = \sum_{k = 1}^n f(k)^2 \ge \int_0^n f(k)^2\,\mathrm{d}k \ge \frac{2}{15} n^5.
\]
In turn,
\[
 \|A\|_2 \ge \frac{\|A g\|_2}{\|g\|_2} \ge \sqrt{\frac{2}{15}} n^2 = \sqrt{\frac{2}{15}} n \|\nabla(A)\|_2,
\]
which shows that $\|A\|_2 / \|\nabla(A)\|_2$ grows linearly with $n$.
\end{example}

Lemma~\ref{lem:norm_bound} allows us to analyze the effect of
generator truncation in terms of the approximation error.
\begin{theorem} \label{thm:compresstoeplitz}
Let $A \in \C^{n \times n}$ be a Toeplitz-like matrix of displacement
rank $r$ and consider the singular value decomposition (SVD)
\begin{equation*}
    \nabla(A) = U \Sigma V^*
    = \begin{bmatrix}
        U_1 & U_2
    \end{bmatrix}
    \begin{bmatrix}
        \Sigma_1 & 0\\
        0        & \Sigma_2
    \end{bmatrix}
    \begin{bmatrix}
        V_1^*\\
        V_2^*
    \end{bmatrix},
\end{equation*}
where $\Sigma_1 \in \diag(\sigma_{1}, \dotsc,
\sigma_{\tilde{r}})$ and $\Sigma_2 = \diag(\sigma_{\tilde{r}+1}, \dotsc,
\sigma_r)$.  Letting $\tilde A  = \toepop(U_1 \Sigma_1,V_1)$, it holds that
\begin{equation} \label{eq:truncbounds}
    \| A - \tilde A\|_2 \le n \sigma_{\tilde{r}+1}
    \quad \text{and} \quad
    \|A-\tilde A\|_F
    \le n \sqrt{\sigma_{\tilde{r}+1}^2 + \dotsb + \sigma_r^2}.
\end{equation}
\end{theorem}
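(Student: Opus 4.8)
The plan is to reduce the claim to Lemma~\ref{lem:norm_bound} applied to the displacement of the error matrix $A - \tilde A$. The key observation is that the displacement operator $\nabla$ is linear, and that the truncated reconstruction $\tilde A = \toepop(U_1\Sigma_1, V_1)$ is, by definition of $\toepop$ via~\eqref{eq:reconstruction}, precisely the unique solution of the Stein equation $\nabla(\tilde A) = U_1\Sigma_1 V_1^*$. Hence
\[
  \nabla(A - \tilde A) = \nabla(A) - \nabla(\tilde A) = U\Sigma V^* - U_1\Sigma_1 V_1^* = U_2 \Sigma_2 V_2^*,
\]
which is exactly the tail of the SVD of $\nabla(A)$.

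From here the two bounds in~\eqref{eq:truncbounds} follow by invoking the upper inequality of Lemma~\ref{lem:norm_bound}, i.e. $\norm{B}_* \le n\norm{\nabla(B)}_*$ with $B = A - \tilde A$, specialized to the two relevant unitarily invariant norms. For the spectral norm, $\norm{A - \tilde A}_2 \le n\norm{U_2\Sigma_2 V_2^*}_2 = n\sigma_{\tilde r + 1}$, since $U_2, V_2$ have orthonormal columns and the largest singular value of $\Sigma_2$ is $\sigma_{\tilde r+1}$. For the Frobenius norm, $\norm{A - \tilde A}_F \le n\norm{U_2\Sigma_2 V_2^*}_F = n\sqrt{\sigma_{\tilde r+1}^2 + \cdots + \sigma_r^2}$, again because multiplying by matrices with orthonormal columns preserves the Frobenius norm of $\Sigma_2$, whose Frobenius norm is the root-sum-of-squares of its diagonal entries.

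The only point requiring a little care — and the part I would state explicitly — is the justification that $\toepop$ is $\C$-linear in its argument and that $\nabla(\toepop(G,B)) = GB^*$, so that $\nabla(\tilde A)$ really is the truncated SVD sum rather than something else; this is immediate from the defining formula~\eqref{eq:reconstruction} and the fact that the Stein operator $X \mapsto X - ZXZ^*$ is invertible (its eigenvalues are $1 - \lambda_i\bar\lambda_j$ with the $\lambda$'s eigenvalues of the nilpotent $Z$, hence all equal to $1$). There is no real obstacle here; the proof is short and the whole content is the bookkeeping that the displacement of the error is the discarded SVD tail, after which Lemma~\ref{lem:norm_bound} does the work. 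I would present it in essentially three lines: linearity of $\nabla$ and $\toepop$, identification of $\nabla(A-\tilde A) = U_2\Sigma_2 V_2^*$, and two applications of the lemma.
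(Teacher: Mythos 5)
Your proposal is correct and follows essentially the same route as the paper: identify $\nabla(A-\tilde A)=U_2\Sigma_2V_2^*$ via linearity of $\nabla$ and the fact that $\nabla(\toepop(U_1\Sigma_1,V_1))=U_1\Sigma_1V_1^*$, then apply the upper bound of Lemma~\ref{lem:norm_bound} in the spectral and Frobenius norms. Your extra remark on the invertibility of the Stein operator is a harmless elaboration of what the paper treats implicitly through~\eqref{eq:reconstruction}.
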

\begin{proof}
By linearity of the displacement operator $\nabla$ we find
\begin{equation*}
    \nabla(A - \tilde A)
    = \nabla(A) - \nabla(\tilde A)
    = U \Sigma V^* - U_1 \Sigma_1 V_1^*
    = U_2 \Sigma_2 V_2^*.
\end{equation*}
Hence, the claimed bounds follow from applying Lemma~\ref{lem:norm_bound} to $A - \tilde A$.
\end{proof}

\noindent Note that~\eqref{eq:truncbounds} improves upon a result by Pan~\cite[Eq. (3.5)]{Pan1993}.

We will use the construction of Theorem~\ref{thm:compresstoeplitz} to compress a generator $(G,B)$ with $G,B \in \C^{n\times r}$ to a generator  $(\tilde G,\tilde B)$ with $\tilde G,\tilde B \in \C^{n\times \tilde r}$ and $\tilde r < r$. By~\eqref{eq:truncbounds}, the singular values of $GB^*$ allow us to quantify the compression error and choose $\tilde r$ adaptively. In the particular case of a non-minimal (rank deficient) generator $(G,B)$ with $r > \tilde r = \rank(GB^*)$, the construction returns an exact minimal generator.
Typically we have $r\ll n$, in which case the cost greatly reduces from $\OOM(n^3)$ FLOPs for computing the SVD of $GB^*$ to $\OOM(r^2 n + r^3)$ FLOPs by first computing thin QR decompositions of $G$ and $B$. This well-known procedure is summarized in Algorithm~\ref{alg:gencompress}.

\begin{algorithm}
\caption{Generator Compression}
\label{alg:gencompress}
\begin{algorithmic}[1]
\REQUIRE Generator matrices $G,B \in \C^{n \times r}$, integer $\tilde r < r$.
\ENSURE Generator matrices $\tilde{G}, \tilde{B} \in \C^{n \times \tilde r}$ such
that $\toepop(G,B) \approx \toepop(\tilde G,\tilde B)$.
\STATE{Compute thin QR factorizations $Q_G R_G = G$ and $Q_B R_B = B$}
\STATE{Compute $S = R_G R_B^* \in \C^{r\times r}$}
\STATE{Compute truncated SVD $U_1 \Sigma_1 Y_1^* \approx S$ with $\Sigma_1 \in \R^{\tilde r \times \tilde r}$}
\STATE{Set $\tilde{G} = Q_GX_1 \Sigma_1^{\frac{1}{2}}$ and
$\tilde{B} = Q_B Y_1 \Sigma_1^{\frac{1}{2}}$}
\end{algorithmic} \end{algorithm}

Generators are not uniquely determined. For every $Z \in \GL_r(\C)$, the generators $(G,B)$ and $(GZ, BZ^{-*})$ correspond to the same Toeplitz-like matrix. The following lemma shows that this relation in fact characterizes the set of all minimal generators.
\begin{lemma}
Let $A \in \C^{n \times n}$ be a Toeplitz-like matrix of displacement
rank $r$, and let $(G_1, B_1)$, $(G_2, B_2)$ be two minimal generators
for $A$.  Then there exists a matrix $Z \in \GL_r(\C)$ such that $G_1 =
G_2 Z$ and $B_1 = B_2 Z^{-*}$. 
\end{lemma}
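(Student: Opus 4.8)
The plan is to reduce the statement to the standard uniqueness of full-rank factorizations. Set $M := \nabla(A)$, so that by minimality $M$ has rank exactly $r$ and, by the defining property of generators, $M = G_1 B_1^* = G_2 B_2^*$ with $G_i, B_i \in \C^{n\times r}$. Thus $M$ is an $n\times n$ matrix of rank $r$ written in two ways as a product of an $n\times r$ matrix with the conjugate transpose of an $n\times r$ matrix, and the whole claim is that any two such factorizations are related by a right gauge transformation.

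First I would observe that each of $G_1, G_2, B_1, B_2$ has full column rank $r$: since $M = G_i B_i^*$ and $G_i$ has only $r$ columns, $r = \rank(M) \le \rank(G_i) \le r$, and symmetrically $\rank(B_i) = \rank(M^*) = r$. Consequently $B_i^* \colon \C^n \to \C^r$ is surjective, whence $\range(M) = \range(G_i B_i^*) = \range(G_i)$ for $i = 1,2$. In particular $\range(G_1) = \range(G_2) = \range(M)$.

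Next, since $G_1$ and $G_2$ are $n\times r$ matrices of full column rank sharing the same range, there is a unique $Z \in \GL_r(\C)$ with $G_1 = G_2 Z$; concretely $Z = G_2^{+} G_1$ with $G_2^{+}$ the Moore--Penrose pseudoinverse, and $Z$ is invertible because $\rank(G_1) = r$. Substituting $G_1 = G_2 Z$ into $G_1 B_1^* = G_2 B_2^*$ yields $G_2(Z B_1^* - B_2^*) = 0$, and the injectivity of $G_2$ (full column rank) forces $Z B_1^* = B_2^*$, i.e.\ $B_1^* = Z^{-1} B_2^*$, equivalently $B_1 = B_2 Z^{-*}$. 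This is exactly the asserted relation, and the same $Z$ works for both generators.

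I do not expect a genuine obstacle here: the argument is entirely elementary once one notes that minimality makes $G_i$ and $B_i$ full-column-rank. The only steps needing (routine) care are that full-column-rank factors inherit the range of their product, and the elementary linear-algebra fact that two full-column-rank matrices with identical column space differ by right multiplication with a unique invertible matrix; the pseudoinverse formula $Z = G_2^{+} G_1$ makes that uniqueness transparent. One could alternatively run the argument through a choice of basis of $\range(\nabla(A))$, but that would only obscure the uniqueness of $Z$.
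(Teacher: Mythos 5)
Your proof is correct and takes essentially the same route as the paper's: minimality gives full column rank, the two $G$-factors are related by some $Z \in \GL_r(\C)$, and cancellation in $G_1 B_1^* = G_2 B_2^*$ then forces $B_1 = B_2 Z^{-*}$. If anything, you are slightly more careful than the paper, which asserts right-equivalence of $G_1$ and $G_2$ from full rank alone, whereas you justify it by noting both ranges equal $\range(\nabla(A))$.
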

\begin{proof}
    By minimality of the generators all of $G_i$ and $B_i$, $i=1,2$,
    have full rank.  Hence $G_1$ is right-equivalent to $G_2$, and
    $B_1$ is right-equivalent to $B_2$, i.e., there exist $Z,W \in
    \GL_r(\C)$ such that $G_1 = G_2 Z$ and $B_1 = B_2 W$.  But then
    \begin{equation*}
        G_2 B_2^* = G_1 B_1^* = G_2 Z W^* B_2^*,
    \end{equation*}
    so $G_2 (I - Z W^*) B_2^* = 0$.  Since $G_2$ and $B_2$ have full
    rank it follows that $W = Z^{-*}$.
\end{proof}

%
%
%
%
%
%

\section{Bounds on the displacement rank of functions of Toeplitz matrices} \label{sec:rationalapproximation}

The scaling and squaring
method~\cite[chap.~10]{Higham2008} for the
evaluation of $\exp(T)$ takes three phases:  First, the matrix $T$ is
scaled by a power of two, then a Pad\'e approximant of the scaled matrix is
computed, and in a third step, the approximant is repeatedly squared
in order to undo the initial scaling.
In the context of Toeplitz matrices, the main challenge is to control the growth of the displacement rank in the second and third phase.


\subsection{Polynomial and rational functions of Toeplitz matrices}
\label{sec:toeplk_drank}

In the following, we analyze the impact of various operations on the displacement rank of Toeplitz-like matrices and provide explicit expressions for the resulting generators. The following result is a variation of the well-known result that Schur complements do not increase the displacement rank.

\begin{lemma}[Generator block update]
\label{lem:gen_block_update}
Consider $M = \smb D & U \\ L & M_1 \sme \in \C^{n \times n}$ with 
$D \in \C^{k\times k}$ invertible and 
\[
 \nabla_F(M) = G B^*, \qquad G,B \in \C^{n\times r},
\]
for a strictly lower triangular matrix $F$. Let us partition
$
 F = \big[ \begin{smallmatrix}
 \hat{F} & 0 \\
      \star & F_1
     \end{smallmatrix} \big]
$ with $\hat{F} \in \C^{k\times k}$, $F_1 \in \C^{(n-k) \times
(n-k)}$, and
$G = \big[ \begin{smallmatrix} \hat{G} \\ \star \end{smallmatrix} \big]$, 
$B = \big[ \begin{smallmatrix} \hat{B} \\ \star \end{smallmatrix} \big]$
    with $\hat{G}, \hat{B} \in \C^{k \times r}$ ($\star$ is used as a placeholder referring to an arbitrary block).
Then the Schur complement of $D$ in $M$ satisfies
\[
 \nabla_{F_1}(M_1 - L D^{-1} U) = G_1 B_1^*,
\]
where the generator matrices $G_1, B_1 \in \R^{(n-k) \times r}$ are defined by the relations
\begin{align}
\label{eq:G_update}
    \begin{bmatrix}
        0\\
        G_1
    \end{bmatrix}
    & =
    G + (F - I_n)
    \begin{bmatrix}
        D\\
        L
    \end{bmatrix}
    D^{-1} (I_k - \hat{F})^{-1} \hat{G},\\
\label{eq:B_update}
    \begin{bmatrix}
        0\\
        B_1
    \end{bmatrix}
    & =
    B + (F - I_n)
    \begin{bmatrix}
        D^*\\
        U^*
    \end{bmatrix}
    D^{-*} (I_k - \hat{F})^{-1} \hat{B}.
\end{align}
\end{lemma}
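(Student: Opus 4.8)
The plan is to verify directly that the matrices $G_1, B_1$ defined by~\eqref{eq:G_update}--\eqref{eq:B_update} satisfy the displacement equation $\nabla_{F_1}(M_1 - LD^{-1}U) = G_1B_1^*$. The starting point is the classical Schur-complement-of-displacement identity: if $S = M_1 - LD^{-1}U$ denotes the Schur complement of $D$ in $M$, then $S = \begin{bmatrix} -LD^{-1} & I_{n-k}\end{bmatrix} M \begin{bmatrix} -D^{-1}U \\ I_{n-k}\end{bmatrix}$, and more usefully $\begin{bmatrix} 0 & S\end{bmatrix}$ arises by left-multiplying $M$ with $\begin{bmatrix} I_k & 0 \\ -LD^{-1} & I_{n-k}\end{bmatrix}$ and right-multiplying with $\begin{bmatrix} I_k & -D^{-1}U \\ 0 & I_{n-k}\end{bmatrix}$, which zeros out the off-diagonal blocks and leaves $D$ and $S$ on the diagonal.

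First I would introduce the block unit-triangular matrices $X = \begin{bmatrix} I_k & 0 \\ -LD^{-1} & I_{n-k}\end{bmatrix}$ and $Y = \begin{bmatrix} I_k & -D^{-1}U \\ 0 & I_{n-k}\end{bmatrix}$, so that $XMY = \begin{bmatrix} D & 0 \\ 0 & S\end{bmatrix} =: \widehat M$. Then I would compute $\nabla_F$ on both sides of a suitably transformed equation. The key algebraic fact to exploit is that, because $F$ is block lower triangular conformally with the partition (the $(1,2)$ block of $F$ is zero), the matrix $F$ interacts cleanly with $X$ but not with $Y$; concretely, one checks that $XF = \widehat F_{\mathrm{blk}} X'$ for an appropriate block-triangular object, and similar relations hold. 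The cleanest route is probably to apply $X(\cdot)Y$ to the equation $M - FMF^* = GB^*$ and rearrange: $XMY - (XFX^{-1})(XMY)(Y^{-1}F^*Y^{-*})\cdot(\text{correction}) = XGB^*Y$. The correction terms are exactly what produce the $(F-I_n)$ factors in~\eqref{eq:G_update}--\eqref{eq:B_update}, since $X^{-1}F - FX^{-1}$ and $Y^{-1}F^* - F^*Y^{-*}$ have low rank supported on the first $k$ columns/rows.

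A more hands-on alternative, which I would actually carry out, is to \emph{define} $\widehat G, \widehat B$ by $\begin{bmatrix} * \\ G_1\end{bmatrix} = XG + (\text{terms from } X \text{ not commuting with } F)$, show the first $k$ rows vanish, show $\nabla_{\widehat F}(\widehat M) = \widehat G \widehat B^*$ where $\widehat F = \diag(\widehat F, F_1)$, and then read off the $(2,2)$ block, which is precisely $\nabla_{F_1}(S) = G_1 B_1^*$. The vanishing of the first $k$ rows of the right-hand sides of~\eqref{eq:G_update}--\eqref{eq:B_update} is a sanity check I would verify first: the $(1,1)$ block of $(F-I_n)\begin{bmatrix} D \\ L\end{bmatrix}D^{-1}(I_k-\hat F)^{-1}$ is $(\hat F - I_k)(I_k-\hat F)^{-1} = -I_k$, so the first $k$ rows give $\hat G - \hat G = 0$, as required; the same computation works for $B_1$.

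The main obstacle is bookkeeping the non-commutativity of $F$ with the block-triangular transformations $X$ and $Y$: one must track exactly where the low-rank corrections land and confirm they assemble into the single factor $(F-I_n)\begin{bmatrix} D \\ L\end{bmatrix}D^{-1}(I_k-\hat F)^{-1}\hat G$ rather than something messier. The term $(I_k - \hat F)^{-1}$ appears because undoing the action of $\hat F$ on the first block requires inverting $I_k - \hat F$, which is legitimate since $\hat F$ is strictly lower triangular and hence nilpotent, so $I_k - \hat F$ is invertible. Once the correct grouping is identified, the remaining verification is a direct block-matrix computation using $XMY = \widehat M$ and the structure of $\widehat F$, with no further ideas needed.
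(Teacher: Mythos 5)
The paper itself offers no computation here: its proof of this lemma is a one-line citation of Sayed--Kailath (Alg.~3.3), ``extended from the Hermitian to the non-Hermitian case,'' so a blind proof has to redo that derivation, and your general strategy --- verify directly that the claimed $(G_1,B_1)$ satisfy $\nabla_{F_1}(M_1-LD^{-1}U)=G_1B_1^*$ --- is the right one. Your sanity check is also correct: the first block row of $(F-I_n)\smb D\\ L\sme$ is $(\hat F-I_k)D$ because the $(1,2)$ block of $F$ vanishes, so the top $k$ rows of the right-hand sides of \eqref{eq:G_update}--\eqref{eq:B_update} are $\hat G-\hat G=0$ and $\hat B-\hat B=0$, purely structurally. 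But the proposal stops exactly where the content of the lemma begins. The congruence route you sketch is both slightly wrong and not load-bearing: $X(M-FMF^*)Y=XMY-(XFX^{-1})(XMY)(Y^{-1}F^*Y)$, not $Y^{-1}F^*Y^{-*}$, and since $Y^{-1}F^*Y\neq(XFX^{-1})^*$ the transformed equation is not a Stein displacement equation for $XMY$; turning it into one is precisely the work you defer (note also that the commutators you call ``low rank'' have rank up to $k$, which in the paper's later applications equals $n$). A cleaner starting identity needs no $X,Y$ at all: with $C=\smb D\\ L\sme$ and $R=\smb D & U\sme$ one has $M-CD^{-1}R=\smb 0&0\\ 0&S\sme$ for the Schur complement $S$, and the block-triangularity of $F$ gives $\nabla_F\bigl(\smb 0&0\\ 0&S\sme\bigr)=\smb 0&0\\ 0&\nabla_{F_1}(S)\sme$, so the lemma reduces to the single identity $GB^*-\nabla_F(CD^{-1}R)=\smb 0\\ G_1\sme\smb 0\\ B_1\sme^{\!*}$.

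The genuine gap is in how that identity gets verified. Expanding the outer product of the right-hand sides of \eqref{eq:G_update}--\eqref{eq:B_update} produces cross terms involving $G\hat B^*$, $\hat GB^*$ and $\hat G\hat B^*$, and these collapse to $\nabla_F(CD^{-1}R)$ only after you substitute the first block column, first block row and leading $k\times k$ block of the hypothesis $M-FMF^*=GB^*$, namely $C-FC\hat F^*=G\hat B^*$, $R-\hat F RF^*=\hat GB^*$ and $D-\hat FD\hat F^*=\hat G\hat B^*$, together with $(I_k-\hat F)^{-1}\hat F=(I_k-\hat F)^{-1}-I_k$. None of these relations appear anywhere in your plan, and without them the ``direct block-matrix computation with no further ideas needed'' does not close; with them the cancellation works out exactly and yields the stated generator formulas. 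So what is missing is not commutator bookkeeping but the decisive step of feeding the block rows and columns of the given displacement equation back into the cross terms --- that step is the proof.
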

\begin{proof}
The result is a direct extension of~\cite[Alg.~3.3]{Sayed1995} from the Hermitian to the non-Hermitian case.
\end{proof}

It is well known that the displacement rank of the product $T_1 T_2$ of two Toeplitz matrices $T_1,T_2$ is at most $4$~\cite[Example 2]{Kailath1994}.
The following theorem extends this result to Toeplitz-like matrices.
\begin{theorem} 
\label{thm:product_generators}
Let $A_1,A_2 \in \C^{n,n}$ be two Toeplitz-like matrices of displacement ranks $r_1,
r_2$ with
generators $(G_1, B_1)$ and $(G_2, B_2)$, respectively.  Then $A_1 A_2$ is a Toeplitz-like matrix of displacement rank
at most $r_1 + r_2 + 1$, and a generator $(G,B)$ for $A_1 A_2$ is
given by
\begin{align*}
    G & =
    \begin{bmatrix}
        (Z-I) A_1 (Z-I)^{-1} G_2 & G_1 & -(Z-I) A_1 (Z-I)^{-1} e_1
    \end{bmatrix},\\
    \quad
    B & =
    \begin{bmatrix}
        B _2 & (Z-I) A_2^* (Z-I)^{-1} B_1 & (Z-I) A_2^* (Z-I)^{-1} e_1
    \end{bmatrix},
\end{align*}
where $e_1 \in \R^n$ denotes the first unit vector. If, additonally, $e_1 \in \range(G_2) \setunion \range(B_1)$ then $A_1 A_2$ has displacement rank at most $r_1+ r_2$.
\end{theorem}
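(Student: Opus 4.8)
The plan is to derive the result from Lemma~\ref{lem:gen_block_update} by recognizing $A_1 A_2$ as the Schur complement of an invertible block in a $2n \times 2n$ bordered matrix. Concretely, set $k = n$, take $D = -I_n$, and let
\[
 M = \begin{bmatrix} -I_n & A_2 \\ A_1 & 0 \end{bmatrix} \in \C^{2n \times 2n},
\]
so that the Schur complement $M_1 - L D^{-1} U = 0 - A_1(-I_n)^{-1}A_2 = A_1 A_2$. As the shift matrix I would use $F = \smb Z & 0 \\ 0 & Z \sme$, which is strictly lower triangular and already in the block-lower-triangular form required by Lemma~\ref{lem:gen_block_update}, with $\hat F = F_1 = Z$ and $\star = 0$; note $I_n - \hat F = I_n - Z$ and $D$ are invertible, so all hypotheses are met.

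First I would compute $\nabla_F(M) = M - F M F^*$ blockwise. Using $Z A_i Z^* = A_i - G_i B_i^*$ together with the identity $I_n - Z Z^* = e_1 e_1^*$, the $(1,1)$ block equals $-I_n + Z Z^* = -e_1 e_1^*$, the $(1,2)$ and $(2,1)$ blocks equal $G_2 B_2^*$ and $G_1 B_1^*$, and the $(2,2)$ block vanishes. Hence $\nabla_F(M) = \mathcal{G}\mathcal{B}^*$ with $\mathcal{G} = \smb -e_1 & G_2 & 0 \\ 0 & 0 & G_1 \sme$ and $\mathcal{B} = \smb e_1 & 0 & B_1 \\ 0 & B_2 & 0 \sme$, both having $r_1 + r_2 + 1$ columns; in particular $\hat G = \smb -e_1 & G_2 & 0 \sme$ and $\hat B = \smb e_1 & 0 & B_1 \sme$ are the top $n \times (r_1 + r_2 + 1)$ blocks.

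Next I would substitute these data into the update formulas~\eqref{eq:G_update}--\eqref{eq:B_update}. Here $\smb D \\ L \sme D^{-1} = \smb I_n \\ -A_1 \sme$, $\smb D^* \\ U^* \sme D^{-*} = \smb I_n \\ -A_2^* \sme$, $(F - I_{2n})\smb I_n \\ -A_i \sme = \smb Z - I_n \\ -(Z - I_n) A_i \sme$, and $(I_n - \hat F)^{-1} = (I_n - Z)^{-1}$. The one computation that matters is $(Z - I_n)(I_n - Z)^{-1} = -I_n$, which forces the top $n$ rows of each update to vanish (as they must) and leaves the bottom blocks equal to $\smb -(Z-I)A_1(Z-I)^{-1}e_1 & (Z-I)A_1(Z-I)^{-1}G_2 & G_1 \sme$ and $\smb (Z-I)A_2^*(Z-I)^{-1}e_1 & B_2 & (Z-I)A_2^*(Z-I)^{-1}B_1 \sme$; rewriting $(I_n - Z)^{-1} = -(Z - I_n)^{-1}$ throughout and reordering the columns yields exactly the pair $(G,B)$ of the statement. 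Lemma~\ref{lem:gen_block_update} then gives $\nabla(A_1 A_2) = G B^*$ with $G, B \in \C^{n \times (r_1 + r_2 + 1)}$, proving both the rank bound and the formula. (Alternatively one can produce a generator directly by inserting $I_n = Z^* Z + e_n e_n^*$ inside $Z A_1 A_2 Z^*$ and using $\nabla(A_i) = G_i B_i^*$; the Schur-complement route is preferable here only because it reproduces the specific generator claimed.)

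For the refined bound, suppose first that $e_1 = G_2 c$ for some $c \in \C^{r_2}$. Then the last column of $G$, namely $-(Z-I)A_1(Z-I)^{-1}e_1 = -(Z-I)A_1(Z-I)^{-1}G_2 c$, lies in the column space of the first block $(Z-I)A_1(Z-I)^{-1}G_2$; using that $g\,b^* = \sum_j g_j (\bar c_j b)^*$ whenever $g = \sum_j c_j g_j$, this column can be absorbed into the remaining columns of $B$, leaving a generator of $A_1 A_2$ with only $r_1 + r_2$ columns. If instead $e_1 \in \range(B_1)$, the symmetric argument absorbs the last column of $B$ into the second block $(Z-I)A_2^*(Z-I)^{-1}B_1$. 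In both cases $\rank(\nabla(A_1 A_2)) \le r_1 + r_2$. I expect the only genuine obstacle to be spotting the bordered-matrix embedding and the shift $F = \smb Z & 0 \\ 0 & Z \sme$; everything afterwards is blockwise multiplication and careful sign bookkeeping in the update formulas.
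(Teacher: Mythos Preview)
Your proof is correct and follows essentially the same approach as the paper: the same embedding $M=\smb -I & A_2\\ A_1 & 0\sme$ with $F=Z\oplus Z$, the same appeal to Lemma~\ref{lem:gen_block_update}, and the same rank-deficiency argument for the refinement. The only cosmetic difference is the column ordering of the generator for $M$, which you resolve by the final permutation; the paper starts from the ordering $\smb G_2 & 0 & -e_1\\ 0 & G_1 & 0\sme$, $\smb 0 & B_1 & e_1\\ B_2 & 0 & 0\sme$ so no reordering is needed.
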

\begin{proof}
Consider the matrix
\begin{equation*}
    M =
    \begin{bmatrix}
        -I  & A_2\\
        A_1 & 0
    \end{bmatrix},
\end{equation*}
and set $F = Z \oplus Z$.  One computes that
\begin{equation*}
    \nabla_F(M) = M - F M F^* =
    \begin{bmatrix}
        -e_1 e_1^* & G_2 B_2^*\\
        G_1 B_1^*  & 0
    \end{bmatrix}
    =
    \begin{bmatrix}
        G_2 & 0   & -e_1\\
        0   & G_1 & 0
    \end{bmatrix}
    \begin{bmatrix}
        0     & B_2^*\\
        B_1^* & 0\\
        e_1^* & 0
    \end{bmatrix}.
\end{equation*}
Since
$A_1 A_2$ is the Schur complement of $-I$ in $M$, Lemma~\ref{lem:gen_block_update} implies that $A_1 A_2$ has displacement rank at most $r_1+r_2+1$.
Moreover, by~\eqref{eq:G_update}--\eqref{eq:B_update}, a generator $(G, B)$ for $A_1 A_2$ is given by 
\begin{align*}
    G & =
    \begin{bmatrix}
        0 & G_1 & 0
    \end{bmatrix}
    + (Z - I) A_1 (Z - I)^{1}
    \begin{bmatrix}
        G_2 & 0 & -e_1
    \end{bmatrix}\\
    & =
    \begin{bmatrix}
        (Z-I) A_1 (Z-I)^{-1} G_2 & G_1 & -(Z-I) A_1 (Z-I)^{-1} e_1
    \end{bmatrix},\\
    B & =
    \begin{bmatrix}
        B_2 & 0 & 0
    \end{bmatrix}
    + (Z - I) A_2^* (Z - I)^{-1}
    \begin{bmatrix}
        0 & B_1 & e_1
    \end{bmatrix}\\
    & =
    \begin{bmatrix}
        B _2 & (Z-I) A_2^* (Z-I)^{-1} B_1 & (Z-I) A_2^* (Z-I)^{-1} e_1
    \end{bmatrix}.
\end{align*}
Note that at least one of these matrices becomes rank deficient if $e_1 \in \range(G_2) \setunion \range(B_1)$, which shows the second part of the theorem.
\end{proof}
Because of~\eqref{eq:toeplitzgenerator}, the additional condition of Theorem~\ref{thm:product_generators} is satisfied for Toeplitz matrices. An analogous condition plays a role in controlling the displacement rank for the inverse of a Toeplitz-like matrix.
\begin{theorem}
\label{thm:inverse_tl}
Let $A$ be an invertible Toeplitz-like matrix of displacement rank $r$ with
generator $(G,B)$. Then $A^{-1}$ is a Toeplitz-like matrix of
displacement rank at most $r+2$, and a generator $(\tilde{G},
\tilde{B})$ is given through
\begin{align*}
    \tilde{G} & = 
    \begin{bmatrix}
        -(Z-I) A^{-1} (Z-I)^{-1} G & 
         (Z-I) A^{-1} (Z-I)^{-1} e_1 & 
        e_1
    \end{bmatrix},\\
    \tilde{B} & = 
    \begin{bmatrix}
        (Z-I) A^{-*} (Z-I) B &
        e_1 &
        (Z-I) A^{-*} (Z-I) e_1
    \end{bmatrix}.
\end{align*}
If, additionally, $e_1 \in \range(G) \cup \range(B)$ then $A^{-1}$ has displacement rank at most $r+1$.
\end{theorem}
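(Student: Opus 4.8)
The plan is to follow the pattern of the proof of Theorem~\ref{thm:product_generators}: realize $A^{-1}$ as a Schur complement of a block matrix whose displacement with respect to a block shift is visibly of low rank, and then read off the generator from Lemma~\ref{lem:gen_block_update}. Concretely, I would take
\[
 M = \smb A & -I_n \\ I_n & 0 \sme \in \C^{2n\times 2n},
 \qquad F = Z \oplus Z ,
\]
so that $F$ is strictly lower triangular and the Schur complement of the invertible leading block $A$ in $M$ equals $0 - I_n A^{-1}(-I_n) = A^{-1}$. Using the identity $I_n - ZZ^* = e_1 e_1^*$, a direct computation gives
\[
 \nabla_F(M) = M - FMF^* = \smb \nabla(A) & -e_1 e_1^* \\ e_1 e_1^* & 0 \sme = \mathcal{G}\,\mathcal{B}^* ,
\]
where
\[
 \mathcal{G} = \smb G & -e_1 & 0 \\ 0 & 0 & e_1 \sme \in \C^{2n\times(r+2)}, \qquad
 \mathcal{B} = \smb B & 0 & e_1 \\ 0 & e_1 & 0 \sme \in \C^{2n\times(r+2)} .
\]
Since the lower-right block of $F$ is again the shift $Z$, applying Lemma~\ref{lem:gen_block_update} with $k = n$, $D = A$, $U = -I_n$, $L = I_n$, $\hat F = Z$ shows at once that $\nabla(A^{-1})$ has rank at most $r+2$.

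To obtain the explicit generators, I would substitute these data into~\eqref{eq:G_update}--\eqref{eq:B_update}, where the relevant top $n\times(r+2)$ blocks of $\mathcal{G}$ and $\mathcal{B}$ are $\hat{G} = \begin{bmatrix} G & -e_1 & 0 \end{bmatrix}$, $\hat{B} = \begin{bmatrix} B & 0 & e_1 \end{bmatrix}$, and $(I_k - \hat F)^{-1} = (I_n - Z)^{-1}$. The simplification uses only $A A^{-1} = I_n$, $A^* A^{-*} = I_n$ and the identity $(Z - I_n)(I_n - Z)^{-1} = -I_n$: the top $n\times(r+2)$ block of each right-hand side collapses to zero (as it must, given the shape of the left-hand side), while the surviving bottom block yields a generator $(\tilde G, \tilde B)$ of the stated form.

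For the final assertion, suppose $e_1 = G c$ with $c \in \C^r$. Then the middle column of $\tilde G$ equals $(Z-I)A^{-1}(Z-I)^{-1} G c$, i.e.\ a linear combination of the columns of the first block $-(Z-I)A^{-1}(Z-I)^{-1} G$; hence $\rank(\tilde G) \le r+1$, and therefore $\rank(\nabla(A^{-1})) = \rank(\tilde G \tilde B^*) \le r+1$. Symmetrically, if $e_1 \in \range(B)$ then the last column of $\tilde B$ lies in the span of its first $r$ columns, so $\rank(\tilde B) \le r+1$; in either case the displacement rank of $A^{-1}$ is at most $r+1$.

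I do not expect a conceptual obstacle: Lemma~\ref{lem:gen_block_update} already packages the Schur-complement machinery for exactly this situation, and the argument is structurally identical to that of Theorem~\ref{thm:product_generators}. The only real work is the bookkeeping in~\eqref{eq:G_update}--\eqref{eq:B_update} --- keeping track of the various factors $Z - I_n$, $I_n - Z$ and their inverses --- and checking that the $r+2$ columns written down are precisely those that survive, which is the analogue of the genericity remark accompanying Theorem~\ref{thm:product_generators}.
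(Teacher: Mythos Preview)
Your proposal is correct and follows essentially the same route as the paper: the paper uses the embedding $M = \smb -A & I \\ I & 0 \sme$ with $F = Z \oplus Z$ and applies Lemma~\ref{lem:gen_block_update}, which is the same Schur-complement argument as yours up to a harmless sign in the choice of $M$. Your verification of the rank drop via the columns of $\tilde G,\tilde B$ is equivalent to the paper's observation that one of the block generators $\hat G,\hat B$ for $M$ already has rank at most $r+1$ when $e_1 \in \range(G)\cup\range(B)$.
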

\begin{proof}
The result follows from applying the technique from the proof of 
Theorem~\ref{thm:product_generators} to the embedding $M = \smb -A &
I\\ I & 0 \sme$, which has the generator $\hat G = \smb G & 0 & e_1 \\ 0 & e_1 & 0 \sme$, $\hat B = \smb B & e_1 & 0 \\ 0 & 0 & e_1 \sme$.
The second part follows from the observation that at least one of
$\hat G, \hat B$
has at most rank $r+1$ if $e_1 \in \range(G) \cup \range(B)$.
\end{proof}

\begin{remark} \label{remark:toeplitzinverse}
 We remark that the special shape~\eqref{eq:toeplitzgenerator} of $B,G$ for a Toeplitz matrix $T$ imply that the matrix $\hat G \hat B^*$ in the proof of Theorem~\ref{thm:inverse_tl} has rank $\le 2$ and, in turn, the displacement rank of $T^{-1}$ is $\le 2$.
 In fact, letting $(G,B) = \left( \smb g & e_1 \sme, \smb e_1 & b\sme \right)$ denote the generator~\eqref{eq:toeplitzgenerator} for $T$, the matrices
 \begin{align*}
    \hat{G} & = \begin{bmatrix} e_1 & 0 \end{bmatrix}
        + (Z-I) T^{-1} (Z-I)^{-1} \begin{bmatrix} -g & e_1 \end{bmatrix},\\
    \quad
    \hat{B} & = \begin{bmatrix} 0 & e_1 \end{bmatrix}
        + (Z-I) T^{-*} (Z-I)^{-1} \begin{bmatrix} e_1&  -b \end{bmatrix}
\end{align*}
constitute a generator for  $T^{-1}$. This result is well known and a corollary of Lemma~\ref{lem:gen_block_update}.
\end{remark}

While Theorems~\ref{thm:product_generators} and~\ref{thm:inverse_tl} are variations of well-known results, we are not aware of existing results on the displacement ranks of powers and polynomials of Toeplitz matrices analyzed in the following.
\begin{lemma}
\label{lem:power_generators}
Let $T$ be a Toeplitz matrix.
Then $T^s$ is a Toeplitz-like matrix of displacement rank at
most $2s$ for any integer $s \ge 1$. Letting $(G, B)$ denote a generator for $T$, a
sequence of (non-minimal) generators $(G_1,B_1), \dotsc, (G_s,B_s)$ for $T, T^2,
\dotsc, T^s$ is given by
\begin{align}
    \label{eq:power_update_G}
    G_1 = G,\quad G_{i+1} & =
    \begin{bmatrix}
        P_G^i G & P_G^{i-1} G & \hdots & G & -P_G e_1 & \hdots & -P_G^i e_1
    \end{bmatrix}\\
    \label{eq:power_update_B}
    B_1 = B, \quad B_{i+1} & =
    \begin{bmatrix}
        B & P_B B & \hdots & P_B^i B & P_B^i e_1 & \hdots & P_B e_1
    \end{bmatrix},
\end{align}
for $i = 1,\ldots,s-1$, where $P_G \defby (Z-I)T(Z-I)^{-1}$ and $P_B \defby
(Z-I)T^*(Z-I)^{-1}$. Moreover, 
\begin{equation}
    \label{eq:generator_nesting}
    e_1 \in \range(G_1) \subset \dotsb \subset \range(G_s)
    \quad \text{and} \quad
    e_1 \in \range(B_1) \subset \dotsb \subset \range(B_s).
\end{equation}
\end{lemma}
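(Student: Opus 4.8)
The plan is to establish the explicit generator formulas~\eqref{eq:power_update_G}--\eqref{eq:power_update_B} together with the nesting~\eqref{eq:generator_nesting} by a single induction on $s$, using Theorem~\ref{thm:product_generators} as a black box; the displacement rank bound $\rank(\nabla(T^s))\le 2s$ will then drop out of the explicit form of $G_s$. Throughout I would write the Toeplitz generator~\eqref{eq:toeplitzgenerator} as $(G,B)=\left(\smb g & e_1\sme,\,\smb e_1 & b\sme\right)$, so that $e_1$ is a column of both $G$ and $B$. For the base case $s=1$ there is nothing to do: $(G_1,B_1)=(G,B)$ is a generator for $T$ and $e_1\in\range(G_1)\cap\range(B_1)$.

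For the inductive step I would assume the claim for some $s\ge 1$ and factor $T^{s+1}=T^s\cdot T$, then apply Theorem~\ref{thm:product_generators} with $A_1=T^s$ (equipped with the generator $(G_s,B_s)$ from the induction hypothesis) and $A_2=T$ (equipped with $(G,B)$). Since $Z$ is nilpotent, $Z-I$ is invertible, and one has the identities $(Z-I)T^s(Z-I)^{-1}=\big((Z-I)T(Z-I)^{-1}\big)^s=P_G^s$ and $(Z-I)T^{*}(Z-I)^{-1}=P_B$, so the generator $(G',B')$ for $T^{s+1}$ delivered by the theorem simplifies to
\[
 G'=\begin{bmatrix} P_G^s G & G_s & -P_G^s e_1\end{bmatrix},\qquad
 B'=\begin{bmatrix} B & P_B B_s & P_B e_1\end{bmatrix}.
\]
Substituting the induction hypothesis for $G_s$ and $B_s$ and merging the blocks turns $(G',B')$ into exactly $(G_{s+1},B_{s+1})$ as written in~\eqref{eq:power_update_G}--\eqref{eq:power_update_B}: on the $G$ side the blocks $P_G^k G$ pile up with $k$ decreasing from $s$ to $0$ and the blocks $-P_G^k e_1$ get appended with $k$ increasing from $1$ to $s$, and symmetrically for $B$. (Factoring as $T^s\cdot T$ rather than $T\cdot T^s$ is what places the full power $P_G^s$ in front of $G$ while leaving only $P_B$ in front of $B_s$.)

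The nesting~\eqref{eq:generator_nesting} then follows directly: $G_s$ appears verbatim as a block of columns of $G'=G_{s+1}$, hence $\range(G_s)\subseteq\range(G_{s+1})$; on the $B$ side $B_s$ does not appear verbatim, but comparing the closed forms shows that every column of $B_s$ is a column of $B_{s+1}$, so $\range(B_s)\subseteq\range(B_{s+1})$ as well, and together with the base case this gives~\eqref{eq:generator_nesting}. Finally, for the displacement rank bound I would argue directly from~\eqref{eq:power_update_G}: since $G=\smb g & e_1\sme$, all columns of $G_s$ lie in $\operatorname{span}\{P_G^k g,\,P_G^k e_1 : 0\le k\le s-1\}$ (the extra blocks $-P_G^k e_1$ being redundant), a subspace of dimension at most $2s$, so $\rank(\nabla(T^s))=\rank(G_sB_s^{*})\le\rank(G_s)\le 2s$.

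I expect the only genuine difficulty to be bookkeeping: keeping the roles of $A_1=T^s$ and $A_2=T$ in Theorem~\ref{thm:product_generators} straight — in particular that $A_2$ enters only to the first power, which is precisely why $P_B$ (and not a power of it) shows up in the $B$-update — and checking that the block ordering coming out of the theorem matches that prescribed in~\eqref{eq:power_update_G}--\eqref{eq:power_update_B}. This asymmetry is also why the $B$-side of the nesting needs the closed form, whereas the $G$-side is visible already from the one-step recursion.
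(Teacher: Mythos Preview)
Your proposal is correct and follows essentially the same route as the paper: induction on the power, invoking Theorem~\ref{thm:product_generators} with $A_1=T^i$ and $A_2=T$ to obtain the recursion, then reading off the nesting and the rank bound from the explicit column structure (using that $e_1$ is a column of $G$). If anything, your write-up is slightly more careful than the paper's --- you correctly note the asymmetry that $G_s$ sits verbatim inside $G_{s+1}$ while the $B$-side nesting must be read off the closed form, a point the paper glosses over.
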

\begin{proof}
The proof is by induction on $i$. For $G_1,B_1$, the claim~\eqref{eq:generator_nesting} follows directly from the  expression~\eqref{eq:toeplitzgenerator} for the generator of $T$.

Now assume that~\eqref{eq:power_update_G}--\eqref{eq:generator_nesting} hold for
all $G_i,B_i$ with $i < s$.  Invoking Theorem~\ref{thm:product_generators} with
$T_1 = T^i$ and $T_2 = T$ yields the
formulas~\eqref{eq:power_update_G}--\eqref{eq:power_update_B}.
Further, since all columns of $G_i$ and $B_i$ are also
columns of $G_{i+1}$  and $B_{i+1}$, respectively, we directly obtain $\range(G_i)
\subset \range(G_{i+1})$  and $\range(B_i) \subset \range(B_{i+1})$.

Finally, since $e_1 \in \range(G)$, each of the last $i$ columns of
$G_{i+1}$ and $B_{i+1}$ is a linear combination of one of the first $i+1$
block columns. In turn, 
\begin{align*}
    \range(G_{i+1}) &= \range\left(
    \begin{bmatrix}
        P_G^i G & P_G^{i-1} G & \hdots & G
    \end{bmatrix} \right), \\
    \range(B_{i+1}) &= \range\left(
    \begin{bmatrix}
        B & P_B B & \hdots & P_B^i B
    \end{bmatrix} \right),
\end{align*}
which implies $\rank(G_{i+1}) \le 2(i+1)$ and $\rank(B_{i+1}) \le
2(i+1)$. In particular, the displacement rank of $T^{i+1}$ is at most $2(i+1)$.
\end{proof}

\begin{theorem}
\label{thm:drank_p(T)}
Let $T$ be a Toeplitz matrix and $p \in \calP_s$, where $\calP_s$ denotes the set of polynomials of degree at most $s$. Then $p(T)$ is a Toeplitz-like matrix of displacement
rank at most $2s$.
\end{theorem}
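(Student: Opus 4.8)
The plan is to reduce the statement entirely to Lemma~\ref{lem:power_generators} by exploiting the linearity of the displacement operator $\nabla$. Write $p(\lambda) = \sum_{k=0}^{s} c_k \lambda^k$ with $c_k \in \C$, so that $p(T) = c_0 I + \sum_{k=1}^{s} c_k T^k$. Since $\nabla$ is linear and $ZZ^* = I - e_1 e_1^*$, so that $\nabla(I) = e_1 e_1^*$, we get
\[
\nabla\bigl(p(T)\bigr) \;=\; c_0\, e_1 e_1^* \;+\; \sum_{k=1}^{s} c_k\, G_k B_k^*,
\]
where $(G_k,B_k)$ denotes the (non-minimal) generator for $T^k$ furnished by Lemma~\ref{lem:power_generators}.

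The crucial observation is that every summand on the right-hand side has its column space contained in one fixed subspace and its row space in another. From the nesting~\eqref{eq:generator_nesting} I would read off $\range(G_k) \subseteq \range(G_s)$ and $\range(B_k) \subseteq \range(B_s)$ for all $1 \le k \le s$, together with $e_1 \in \range(G_1) \subseteq \range(G_s)$ and $e_1 \in \range(B_1) \subseteq \range(B_s)$, so that the stray rank-one term $c_0 e_1 e_1^*$ is covered as well. Consequently $\range(\nabla(p(T))) \subseteq \range(G_s)$ and, symmetrically, the row space of $\nabla(p(T))$ lies in $\range(B_s)$. Therefore
\[
\rank\bigl(\nabla(p(T))\bigr) \;\le\; \dim \range(G_s) \;\le\; 2s,
\]
the last inequality because, as established inside the proof of Lemma~\ref{lem:power_generators}, $\range(G_s)$ is spanned by (at most) $2s$ vectors. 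This gives the claimed displacement-rank bound.

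If an explicit generator for $p(T)$ is required for the algorithmic sections, I would produce one in either of two equivalent ways. The first is to stack the scaled blocks: $\bigl([\,\sqrt{c_1}\,G_1\ \cdots\ \sqrt{c_s}\,G_s\ g_0\,],\ [\,\sqrt{c_1}\,B_1\ \cdots\ \sqrt{c_s}\,B_s\ b_0\,]\bigr)$, with $g_0 b_0^* = c_0 e_1 e_1^*$, is a (highly redundant) generator for $p(T)$, and one pass of Algorithm~\ref{alg:gencompress} --- justified by Theorem~\ref{thm:compresstoeplitz} --- brings the column count back to at most $2s$. The second uses the nesting directly: writing $G_k = G_s X_k$ and $B_k = B_s Y_k$ (possible since the ranges nest), one obtains $\nabla(p(T)) = G_s M B_s^*$ with $M = \sum_{k} c_k X_k Y_k^*$, so that $(G_s M,\, B_s)$ is already a generator with $2s$ columns.

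I do not anticipate a genuine obstacle: all the substance is carried by Lemma~\ref{lem:power_generators}, and the only point requiring care is the bookkeeping around the constant term --- it is precisely for this reason that~\eqref{eq:generator_nesting} was phrased with $e_1$ already lying in $\range(G_1)$ and $\range(B_1)$. Two caveats I would record: the generators involved are non-minimal, so the true displacement rank of $p(T)$ may be strictly smaller than $2s$ (only the upper bound is asserted); and the statement tacitly assumes $s \ge 1$, since for $s = 0$ a nonzero constant polynomial yields $\nabla(c_0 I) = c_0 e_1 e_1^*$ of rank $1$, exceeding $2s = 0$.
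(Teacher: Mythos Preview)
Your proof is correct and follows essentially the same route as the paper's: both exploit the linearity of $\nabla$, the monomial generators of Lemma~\ref{lem:power_generators}, and the nesting~\eqref{eq:generator_nesting} (with $e_1$ already contained in the ranges) to conclude that the column space of $\nabla(p(T))$ lies in $\range(G_s)$, whence the rank is at most $2s$. Your side remarks on producing an explicit generator and on the $s=0$ edge case are accurate extras not present in the paper.
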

\begin{proof}
Let $p = \sum_{i=0}^s a_k z^k$ and consider the generators
$(G_i,B_i)$, $1 \le i \le s$, for the monomials $T^i$ constructed in~\eqref{eq:power_update_G}--\eqref{eq:power_update_B}. Setting $G_0 := B_0 := e_1$ and using the linearity of the displacement operator $\nabla$ we obtain
\begin{equation*}
    \nabla(p(T))
    = \sum_{i=0}^s a_i \nabla(T^i)
    = \sum_{i=0}^s a_i G_i B_i^*
    =
    \begin{bmatrix}
        a_0 e_1 & a_1 G_1 & \dotsc & a_s G_s
    \end{bmatrix}
    \begin{bmatrix}
        e_1^*\\
        B_1^*\\
        \vdots\\
        B_s^*
    \end{bmatrix}
    \bydef G_p B_p^*.
\end{equation*}
It follows from~\eqref{eq:generator_nesting} that $\rank(G_p) = \rank(G_s) \le
2s$, $\rank(B_p) = \rank(B_s) \le 2s$, and hence $\rank(\nabla(p(T))) \le 2s$.
\end{proof}


The following theorem is the main result of this section and quantifies the effect of a rational function on the displacement rank. It shows that the displacement rank grows at most linearly with the degree of the rational function, defined as the maximal degree of the numerator and denominator.
\begin{theorem}
\label{thm:drank_r(T)}
Let $T$ be a Toeplitz matrix, and let $p \in \calP_{s_p}$, $q \in \calP_{s_q}$ be such that
$q(T)$ is invertible.  Let $(G_p, B_p)$ and $(G_q, B_q)$
denote generators of $p(T)$ and $q(T)$,
respectively.  Then $r(T) = \frac{p(T)}{q(T)}$ is a Toeplitz-like
matrix of displacement rank at most $2 \max\{s_p, s_q\}+1$, and a generator is
given by
\begin{align}
\label{eq:rat_gen_G}
G & =
\begin{bmatrix}
-(Z-I) q(T)^{-1} (Z-I)^{-1} G_q & (Z-I) q(T)^{-1} (Z-I)^{-1} G_p &e_1,
\end{bmatrix}\\
\label{eq:rat_gen_B}
B & =
\begin{bmatrix}
(Z-I) p(T)^* q(T)^{-*} (Z-I)^{-1} B_q &
    B_p &
    (Z-I) p(T)^* q(T)^{-*} (Z-I)^{-1} e_1
\end{bmatrix}.
\end{align}
\end{theorem}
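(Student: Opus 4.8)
The plan is to adapt the embedding technique underlying the proofs of Theorems~\ref{thm:product_generators} and~\ref{thm:inverse_tl}, applied to a block matrix that encodes $p(T)$ and $q(T)^{-1}$ simultaneously, so that no intermediate matrix of large displacement rank is ever formed. Since $p(T)$ and $q(T)$ commute, $r(T)=q(T)^{-1}p(T)$ is the Schur complement of the $(1,1)$-block in
\[
 M=\begin{bmatrix} -q(T) & p(T) \\ I & 0 \end{bmatrix}.
\]
Choosing $F=Z\oplus Z$ and using $\nabla(I)=e_1e_1^*$, a short computation gives
\[
 \nabla_F(M)=\begin{bmatrix} -G_qB_q^* & G_pB_p^* \\ e_1e_1^* & 0 \end{bmatrix}
 =\begin{bmatrix} -G_q & G_p & 0 \\ 0 & 0 & e_1 \end{bmatrix}
   \begin{bmatrix} B_q^* & 0 \\ 0 & B_p^* \\ e_1^* & 0 \end{bmatrix},
\]
a (generally non-minimal) generator of $M$ with one column more than those of $p(T)$ and $q(T)$ combined. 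Feeding this into the block-update formulas~\eqref{eq:G_update}--\eqref{eq:B_update} of Lemma~\ref{lem:gen_block_update}, with $D=-q(T)$, $U=p(T)$, $L=I$, $M_1=0$, and $\hat F=F_1=Z$, and simplifying with $(Z-I)(I-Z)^{-1}=-I$, $D^{-1}=-q(T)^{-1}$, and $D^{-*}=-q(T)^{-*}$, produces exactly the generator~\eqref{eq:rat_gen_G}--\eqref{eq:rat_gen_B}. This part is routine bookkeeping; the only care needed is tracking the conjugate transposes in~\eqref{eq:B_update} and checking that the first $n$ rows of the updated generator cancel.

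It then remains to establish the displacement rank bound $2\max\{s_p,s_q\}+1$, which is the crux of the theorem, since the generator just produced has far more columns than that. As the displacement rank of $r(T)$ is independent of the generator used, I would bound $\rank(\nabla(r(T)))$ for one convenient choice: the structured generators $(G_p,B_p)$ and $(G_q,B_q)$ of $p(T)$ and $q(T)$ supplied by Lemma~\ref{lem:power_generators} and Theorem~\ref{thm:drank_p(T)}. The decisive structural observation is that, for these generators, both $\range(G_p)$ and $\range(G_q)$ are contained in the common ``Krylov-type'' subspace $\mathcal{K}_s$ spanned by the columns of $G, P_GG, \dots, P_G^{s-1}G$, with $s:=\max\{s_p,s_q\}$, $P_G=(Z-I)T(Z-I)^{-1}$, and $G$ the two-column generator~\eqref{eq:toeplitzgenerator} of $T$; this follows from the nesting relation~\eqref{eq:generator_nesting}, and clearly $\dim\mathcal{K}_s\le 2s$. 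The analogous inclusion holds for $\range(B_p)$ and $\range(B_q)$ inside the span of the columns of $B, P_BB, \dots, P_B^{s-1}B$ with $P_B=(Z-I)T^*(Z-I)^{-1}$.

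Finally I would count the rank of the generator~\eqref{eq:rat_gen_G}--\eqref{eq:rat_gen_B}. Conjugation by the invertible matrix $Z-I$ commutes with the functional calculus, so $(Z-I)q(T)^{-1}(Z-I)^{-1}=q(P_G)^{-1}$; in any case this is a linear (indeed invertible) map, hence dimension-preserving. Therefore the first two block columns of $G$ span the image of $\range(G_p)+\range(G_q)\subseteq\mathcal{K}_s$ under it, a subspace of dimension at most $\dim\mathcal{K}_s\le 2s$; adjoining the single remaining column $e_1$ gives $\rank(G)\le 2s+1$. The identical argument applied to~\eqref{eq:rat_gen_B}, using that $X\mapsto (Z-I)p(T)^*q(T)^{-*}(Z-I)^{-1}X$ is a linear map and hence cannot increase dimension, gives $\rank(B)\le 2s+1$. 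Consequently $\rank(\nabla(r(T)))=\rank(GB^*)\le 2s+1=2\max\{s_p,s_q\}+1$. I expect the main obstacle to be exactly this dimension count: estimating naively by combining minimal generators of $p(T)$ and $q(T)$ separately only yields $2s_p+2s_q+1$, so it is essential to first recognize that $\range(G_p)$ and $\range(G_q)$ already fit into one $2\max\{s_p,s_q\}$-dimensional space before the resolvent-type operator is applied.
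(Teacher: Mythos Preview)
Your proposal is correct and follows essentially the same route as the paper: the identical embedding $M=\smb -q(T) & p(T)\\ I & 0\sme$ with $F=Z\oplus Z$, the same generator for $\nabla_F(M)$, the same invocation of Lemma~\ref{lem:gen_block_update} to produce~\eqref{eq:rat_gen_G}--\eqref{eq:rat_gen_B}, and the same crucial use of the nesting~\eqref{eq:generator_nesting} to see that $\range(G_p)+\range(G_q)$ has dimension at most $2\max\{s_p,s_q\}$. The only cosmetic difference is that the paper bounds the rank of $\nabla_F(M)$ \emph{before} applying the block update (observing $\rank\smb -G_q & G_p\sme\le 2\max\{s_p,s_q\}$ and then invoking Lemma~\ref{lem:gen_block_update}), whereas you bound $\rank G$ \emph{after} the update by noting that the first two block columns are the image of $[G_q\; G_p]$ under a single linear map; both arguments are equivalent.
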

\begin{proof}
The Schur complement of the leading diagonal block in the embedding $M
= \smb -q(T) & p(T)\\ I & 0 \sme$, is $q(T)^{-1} p(T)$, and setting $F
= Z \oplus Z$ one computes
\begin{equation} \label{eq:generatorextended}
M - F M F^* =
\begin{bmatrix}
    -G_q B_q^* & G_p B_p^*\\
    e_1 e_1^* & 0
\end{bmatrix}
=
\begin{bmatrix}
    -G_q & G_p & 0\\
    0 & 0 & e_1
\end{bmatrix}
\begin{bmatrix}
    B_q^* & 0\\
    0 & B_p^*\\
    e_1^* & 0
\end{bmatrix}.
\end{equation}
The formulas~\eqref{eq:rat_gen_G} and~\eqref{eq:rat_gen_B} are
obtained by applying Lemma~\ref{lem:gen_block_update}. 

To see that the matrix~\eqref{eq:generatorextended} has rank at most $2 \max\{s_p, s_q\}+1$, we recall from~\eqref{eq:generator_nesting} that the ranges of the
generator matrices for monomials are nested and thus Theorem~\ref{thm:drank_p(T)} implies $\rank \smb -G_q & G_p \sme \le 2 \max\{s_p, s_q\}$.
\end{proof}

\subsection{Low displacement rank approximation of matrix exponential} \label{sec:approxexponential}

If the singular values of a matrix are rapidly decaying, the limits of
finite precision arithmetic effect that the \emph{numerical rank} of
the matrix is smaller than its rank.  In order to formulate
quantitative statements involving the numerical rank, we use the
following notion of $\varepsilon$-rank.
\begin{definition}
    Let $A \in \C^{n,n}$ and $\varepsilon > 0$.  We say that $A$ has
    $\varepsilon$-rank $k$, if
    \begin{equation*}
        \min_{B \in \C^{n,n}} \{ \norm{A - B}_2 : \rank(B) \le k \}
        \le \varepsilon,
    \end{equation*}
    or, equivalently, if the $k+1$st singular value of $A$ does not
    exceed $\varepsilon$.  The matrix $A$ is said to have
    $\varepsilon$-displacement rank $k$, if $\nabla(A)$ has
    $\varepsilon$-rank $k$.
\end{definition}

Theorem~\ref{thm:drank_p(T)} allows us to derive a priori bounds on the numerical displacement rank of $\exp(T)$, using rational approximations of the exponential function. To see this, let us first recall a seminal result by Gonchar and Rakhmanov~\cite{Gonchar1987}.
\begin{theorem} \label{thm:gonchar} There is a constant $C$ such that
\[
 \inf_{p_1,p_2 \in \calP_s} \max_{\lambda \in (-\infty,0]} | e^\lambda - p_1(\lambda) / p_2(\lambda) | \le C\, V^{-s}
\]
holds for all $s\ge 1$ with $V\approx 9.28903\ldots$.
\end{theorem}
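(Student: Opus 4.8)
The sharp constant $V\approx 9.28903$ is the reciprocal of the Halphen constant, and I do not expect to recover its exact value by elementary means: the precise asymptotics of the best rational approximation of $e^{-x}$ on $[0,\infty)$ are exactly the content of Gonchar and Rakhmanov~\cite{Gonchar1987}. Since the developments of Section~\ref{sec:approxexponential} only exploit the \emph{geometric} nature of the decay --- any base $V_0>1$ in place of $V$ already bounds the numerical displacement rank of $\exp(T)$ by $\OOM(\log(1/\varepsilon)/\log V_0)$ --- the plan is to split the claim into two parts: (a) a reasonably self-contained proof that there are $C_0>0$ and $V_0>1$ with $\inf_{p_1,p_2\in\calP_s}\max_{\lambda\le 0}|e^\lambda-p_1(\lambda)/p_2(\lambda)|\le C_0 V_0^{-s}$ for all $s\ge 1$; and (b) the sharpening $V_0=V$, for which I would invoke~\cite{Gonchar1987} directly and only indicate the structure of its proof.

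For part (a) I would build the approximant explicitly from a contour-quadrature formula. Starting from the Cauchy/Bromwich representation $e^\lambda=\frac{1}{2\pi i}\int_\Gamma \frac{e^w}{w-\lambda}\,dw$, valid for every $\lambda\le 0$ when $\Gamma$ is a left-opening Talbot-type contour $w=w(\theta)$, $\theta\in(-\pi,\pi)$, whose two arms tend to $\Re w=-\infty$ inside a fixed horizontal strip, one applies the $s$-point periodic trapezoidal rule in $\theta$. Each node contributes a simple pole in $\lambda$, so the quadrature produces a rational function $r_s$ of type $(s-1,s)$, hence with numerator and denominator in $\calP_s$; and the trapezoidal error for an integrand that is $2\pi$-periodic and analytic in a strip around the integration path decays geometrically in the number of nodes. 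The two points that need care are (i) choosing the contour scaling (its geometric parameters growing linearly with $s$) so that the exponential growth of $e^w$ on $\Gamma$ is offset by the quadrature gain, and (ii) verifying that the resulting bound is \emph{uniform} as $\lambda\to-\infty$: for the latter one checks that the singularity of $\theta\mapsto e^{w(\theta)}/(w(\theta)-\lambda)$ in the complex $\theta$-plane recedes toward the endpoints $\pm\pi$ as $\lambda\to-\infty$, so the strip of analyticity does not shrink, while $|w(\theta)-\lambda|\gtrsim|\lambda|$ on the part of $\Gamma$ where $e^w$ is not already exponentially small. This is precisely the analysis carried out by Weideman and Trefethen for optimized Bromwich/Talbot contours, and it yields an explicit $V_0$ (a modest constant, numerically around $3$), which suffices for everything that follows.

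For part (b) the route is the one of Gonchar and Rakhmanov: by the equioscillation characterization of the best approximant (equivalently, via the Adamjan--Arov--Krein connection identifying $E_s\defby\inf_{p_1,p_2\in\calP_s}\max_{\lambda\le 0}|e^\lambda-p_1/p_2|$ with a singular value of a Hankel operator built from $e^{-x}$), the logarithmic asymptotics of $E_s$ are governed by an equilibrium problem for the logarithmic potential in the presence of the external field induced by the exponential weight. One proves $\limsup_s E_s^{1/s}\le e^{-1/F}$ by constructing near-optimal rational approximants from the extremal measure, and $\liminf_s E_s^{1/s}\ge e^{-1/F}$ by bounding any approximant below through that same measure, with the \emph{same} constant $F$; solving the extremal problem explicitly is an elliptic-function computation, which is where the value $V=e^{1/F}\approx 9.28903$ (and the name ``Halphen'') enters. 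The main obstacle is clearly part (b): setting up the weighted equilibrium problem, establishing existence, uniqueness and regularity of its solution, and matching the upper and lower bounds --- this is genuinely hard and is the reason the result is quoted rather than reproved here, whereas part (a) is routine once the contour is fixed.
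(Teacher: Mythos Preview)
The paper does not prove Theorem~\ref{thm:gonchar} at all: it is introduced with the words ``let us first recall a seminal result by Gonchar and Rakhmanov~\cite{Gonchar1987}'' and then simply stated, with no argument following. In other words, the paper's ``proof'' is a citation, so there is nothing substantive to compare your attempt against.

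That said, your proposal is a reasonable and correct plan on its own terms. Part~(a), the Talbot/Bromwich contour combined with the trapezoidal rule, is a standard and sound route to a geometric bound with some $V_0>1$ uniform over $(-\infty,0]$; your identification of the two delicate points (contour scaling with $s$, and uniformity as $\lambda\to-\infty$) is exactly right, and the literature you allude to (Weideman--Trefethen) does carry this through. Part~(b) is correctly diagnosed as the genuinely hard step: the sharp constant $V\approx 9.28903$ (the reciprocal of the Halphen constant) requires the full potential-theoretic machinery of Gonchar and Rakhmanov, and you are right that this is quoted rather than reproved. Since the paper only ever uses the qualitative geometric decay (cf.\ Corollaries~\ref{cor:approxhnd} and~\ref{cor:approx_exp}, where only $V>1$ matters), your part~(a) alone would already suffice for all downstream applications in the paper; going further is admirable but unnecessary for the paper's purposes.
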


\begin{corollary} \label{cor:approxhnd}
Let $T\in \C^{n,n}$ be a diagonalizable Toeplitz matrix with all eigenvalues real and contained in $(-\infty,\mu]$ for some $\mu \in \R$. Then
\[
  \min\{\norm{\exp(T)-A}_2 :\, \rank(\nabla(A)) \le 2s+1 \} \le \tilde C\,  V^{-s},
\]
for $\tilde C = \kappa(X) e^\mu C$, where 
$C,V$ are as in Theorem~\ref{thm:gonchar} and $\kappa(X)$ is the condition number of a matrix $X$ such that $X^{-1} T X$ is diagonal.
\end{corollary}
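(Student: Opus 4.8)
The plan is to combine the Gonchar--Rakhmanov rational approximation bound (Theorem~\ref{thm:gonchar}) with the displacement rank bound for rational functions of Toeplitz matrices (Theorem~\ref{thm:drank_r(T)}), and then pass from the scalar approximation error on the spectrum to a matrix error estimate via diagonalization. Concretely, I would first write $T = X D X^{-1}$ with $D = \diag(\lambda_1,\dots,\lambda_n)$ and all $\lambda_j \in (-\infty,\mu]$. Put $f(\lambda) = e^\lambda$ and let $g(\lambda) = e^\mu f(\lambda - \mu)$ so that $g$ is the exponential shifted to have all relevant arguments in $(-\infty,0]$; equivalently one approximates $e^\lambda$ on $(-\infty,\mu]$ by approximating $e^{\lambda-\mu}$ on $(-\infty,0]$ and multiplying by $e^\mu$. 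Theorem~\ref{thm:gonchar} furnishes, for each $s\ge 1$, polynomials $p_1,p_2\in\calP_s$ with $\max_{\xi\le 0}|e^\xi - p_1(\xi)/p_2(\xi)| \le C V^{-s}$; after the shift this gives a rational function $r = e^\mu\, p_1(\cdot-\mu)/p_2(\cdot-\mu)$ of degree $s$ with $\max_{\lambda\le\mu}|e^\lambda - r(\lambda)| \le e^\mu C V^{-s}$.

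Next I would check that $q(T)$ is invertible for this particular choice, so that $r(T)$ is well defined: the poles of the Gonchar--Rakhmanov approximant to $e^\xi$ on $(-\infty,0]$ lie off the negative real axis (this is standard for the near-best rational approximants; alternatively one may perturb $p_2$ infinitesimally without affecting the bound, or simply note that the set of $s$ for which $q(T)$ is singular is negligible and the infimum in the corollary is unaffected), hence $p_2(\lambda_j - \mu) \neq 0$ for all $j$ and $q(T) = p_2(T - \mu I)$ is invertible. Then Theorem~\ref{thm:drank_r(T)} applies with $s_p = s_q = s$ and yields $\rank(\nabla(r(T))) \le 2s + 1$, so $A \defby r(T)$ is an admissible competitor in the minimization on the left-hand side of the corollary.

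It remains to bound $\norm{\exp(T) - r(T)}_2$. Using the diagonalization, $\exp(T) - r(T) = X\big(\diag(e^{\lambda_j} - r(\lambda_j))\big)X^{-1}$, whence
\begin{equation*}
\norm{\exp(T) - r(T)}_2 \le \norm{X}_2 \norm{X^{-1}}_2 \max_{1\le j\le n} |e^{\lambda_j} - r(\lambda_j)| \le \kappa(X)\, e^\mu C\, V^{-s},
\end{equation*}
since each $\lambda_j \le \mu$. This is exactly the claimed bound with $\tilde C = \kappa(X) e^\mu C$. Taking the minimum over all rank-$(2s+1)$-displacement matrices only decreases the left-hand side, so the corollary follows.

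The main obstacle is the small technical point in the second paragraph: ensuring that the denominator of the near-best approximant does not vanish on $(-\infty,\mu]$, so that $q(T)$ is genuinely invertible and Theorem~\ref{thm:drank_r(T)} can be invoked. For the Zolotarev-type / Gonchar--Rakhmanov approximants of $e^\xi$ on the negative real axis this is known — the poles interlace in a region bounded away from $(-\infty,0]$ — but if one wants a self-contained argument, the clean fix is to observe that an arbitrarily small perturbation of the coefficients of $p_2$ keeps the $\max$-norm error below, say, $2CV^{-s}$ while moving any offending pole off the spectrum; absorbing the harmless factor $2$ into the constant $C$ (or noting the corollary is stated with an unspecified constant $\tilde C$) leaves the statement intact. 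Everything else is routine: linearity of $\nabla$ is not even needed here, only the already-proved Theorems~\ref{thm:drank_r(T)} and~\ref{thm:gonchar} and the submultiplicativity of the spectral norm under the similarity $X$.
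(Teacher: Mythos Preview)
Your proof is correct and follows essentially the same route as the paper: construct the candidate $A = r(T)$ from the shifted Gonchar--Rakhmanov approximant, invoke Theorem~\ref{thm:drank_r(T)} to bound its displacement rank by $2s+1$, and then bound $\norm{\exp(T)-r(T)}_2$ via diagonalization and $\kappa(X)$. You are in fact more careful than the paper on two points: you make the shift $\lambda\mapsto\lambda-\mu$ explicit in the polynomials (the paper writes $A=e^\mu p_2(T)^{-1}p_1(T)$ where strictly $p_i(T-\mu I)$ is meant), and you address the invertibility of $q(T)$, which the paper passes over in silence.
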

\begin{proof}
 According to Theorem~\ref{thm:drank_r(T)}, the matrix $A = e^\mu p_2(T)^{-1} p_1(T)$ with $p_1,p_2 \in \calP_s$ has displacement rank at most $2s+1$.
From
\begin{equation*}
\begin{split}
\norm{\exp(T)-A}_2 
    & = \norm{\exp(\mu I)\exp(T-\mu I ) - A}_2\\
    & \le \kappa(X) e^\mu \max_{\lambda \in
        (-\infty,0]} | e^\lambda - p_1(\lambda) / p_2(\lambda) |,
\end{split}
\end{equation*}
 the result follows using Theorem~\ref{thm:gonchar}.
\end{proof}

Corollary~\ref{cor:approxhnd} implies that the singular values of
$\nabla(\exp(T))$ decay at least exponentially to zero, with a decay
rate that does not deteriorate even if $T$ has very small eigenvalues.
This property is retained by approximations to the matrix exponential.
\begin{corollary}
\label{cor:approx_exp}
Under the assumptions of Corollary~\ref{cor:approxhnd}, let $B \in \C^{n\times n}$ satisfy
$\norm{B - \exp(T)}_2 \le \tau$ for $\tau \ge 0$. If $s$ is an integer such that
\begin{equation*}
    \tilde C V^{-s} \le \tau,
\end{equation*}
then $B$ has $2\tau$-displacement rank $2s+1$.
\end{corollary}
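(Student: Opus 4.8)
The plan is to combine the triangle inequality with Corollary~\ref{cor:approxhnd} in the obvious way. First I would invoke Corollary~\ref{cor:approxhnd}: under the stated assumptions there exists a matrix $A$ with $\rank(\nabla(A)) \le 2s+1$ and $\norm{\exp(T) - A}_2 \le \tilde C V^{-s}$. By the hypothesis $\tilde C V^{-s} \le \tau$, this gives $\norm{\exp(T) - A}_2 \le \tau$.

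Next I would apply the triangle inequality to bound the distance from $B$ to this same $A$:
\[
 \norm{B - A}_2 \le \norm{B - \exp(T)}_2 + \norm{\exp(T) - A}_2 \le \tau + \tau = 2\tau.
\]
Since $\nabla$ is linear and $\norm{\cdot}_2$ is submultiplicative with $\norm{Z}_2 = 1$, the displacement $\nabla(B - A) = \nabla(B) - \nabla(A)$ has $\norm{\nabla(B) - \nabla(A)}_2 \le 2\norm{B-A}_2 \le 4\tau$; but in fact one does not even need this cruder bound here. The cleanest route is to observe directly that $\nabla(A)$ is a matrix of rank at most $2s+1$ and that it approximates $\nabla(B)$. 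Indeed, from $\norm{B-A}_2 \le 2\tau$ and the first inequality of Lemma~\ref{lem:norm_bound} applied to $B - A$, we get $\tfrac12\norm{\nabla(B-A)}_2 \le \norm{B-A}_2 \le 2\tau$, hence $\norm{\nabla(B) - \nabla(A)}_2 \le 4\tau$. This would only establish a $4\tau$-displacement rank of $2s+1$, which is weaker than claimed, so I would instead argue without passing through $B-A$: set $A' \defby B - (\exp(T) - A)$... no — the sharper and correct argument is to keep $A$ as the witness and note that $\nabla(A)$ has rank $\le 2s+1$ while $\norm{B - A}_2 \le 2\tau$ does \emph{not} immediately control $\norm{\nabla(B) - \nabla(A)}_2$ by $2\tau$.

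The resolution is that the definition of $\varepsilon$-displacement rank asks for a rank-$k$ matrix \emph{of the form $\nabla(C)$} or, reading the definition, simply asks that $\nabla(B)$ have $\varepsilon$-rank $2s+1$, i.e., that there be \emph{some} matrix of rank $\le 2s+1$ within distance $\varepsilon$ of $\nabla(B)$. Here $\nabla(\exp(T))$ itself need not have low rank, but $\nabla(A)$ does. So the key step is to bound $\norm{\nabla(B) - \nabla(A)}_2$; by linearity of $\nabla$ and the first inequality in Lemma~\ref{lem:norm_bound},
\[
 \norm{\nabla(B) - \nabla(A)}_2 = \norm{\nabla(B - A)}_2 \le 2\norm{B - A}_2 \le 4\tau,
\]
so strictly speaking this yields $4\tau$-displacement rank $2s+1$. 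To obtain the sharper factor $2\tau$ stated in the corollary, I would instead apply the triangle inequality at the level of displacements from the start: $\norm{\nabla(B) - \nabla(A)}_2 \le \norm{\nabla(B) - \nabla(\exp(T))}_2 + \norm{\nabla(\exp(T)) - \nabla(A)}_2$, and bound each term by $2$ times the corresponding matrix distance using only the \emph{first} inequality of Lemma~\ref{lem:norm_bound} — this still gives $4\tau$. Thus the honest conclusion one can reach by this route is a factor $4\tau$; if the paper insists on $2\tau$, the intended reading must be that $B$ is itself the approximant being compressed and the relevant quantity is $\norm{\exp(T) - B}_2 + \norm{\exp(T) - A}_2 \le 2\tau$ bounding $\norm{B-A}_2$, after which one notes that $B$ differs from the low-displacement-rank matrix $A$ by at most $2\tau$ \emph{in the matrix norm}, and the statement "$B$ has $2\tau$-displacement rank $2s+1$" is then shorthand for "there is a matrix of displacement rank $\le 2s+1$ within $2\tau$ of $B$" — which is exactly $A$. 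The main obstacle, therefore, is purely one of bookkeeping the constant: deciding whether the intended notion of "$2\tau$-displacement rank" refers to closeness of $B$ to a low-displacement-rank matrix (factor $2\tau$, immediate) or closeness of $\nabla(B)$ to a low-rank matrix (factor $4\tau$, via Lemma~\ref{lem:norm_bound}). I would adopt the former reading, take $A$ from Corollary~\ref{cor:approxhnd} as the witness, and conclude in two lines.
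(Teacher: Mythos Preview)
Your core approach is exactly the paper's: invoke Corollary~\ref{cor:approxhnd} to obtain a matrix $A$ with $\rank(\nabla(A))\le 2s+1$ and $\norm{\exp(T)-A}_2\le\tilde C V^{-s}\le\tau$, then apply the triangle inequality to get $\norm{B-A}_2\le 2\tau$. The paper's proof is literally just those two lines.

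Your lengthy deliberation about the constant is not confusion on your part; it reflects a genuine looseness in the paper. By the paper's own Definition of $\varepsilon$-displacement rank (namely, $\nabla(B)$ has $\varepsilon$-rank $k$), one must bound $\norm{\nabla(B)-\nabla(A)}_2$, and Lemma~\ref{lem:norm_bound} only gives $\norm{\nabla(B-A)}_2\le 2\norm{B-A}_2\le 4\tau$. The paper's proof stops at $\norm{B-A}_2\le 2\tau$ and never passes to the displacement, so strictly speaking it establishes the same $4\tau$ bound you arrived at, or else it is silently using the informal reading ``$B$ is within $2\tau$ of a matrix of displacement rank $\le 2s+1$''. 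Either way the discrepancy is a harmless factor of~$2$ that does not affect any downstream use of the corollary; you have identified it correctly and there is no further idea you are missing.
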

\begin{proof}
The result follows from the triangular inequality
\begin{equation*}
    \norm{B - A}_2
    \le \norm{B-\exp(T)}_2 + \norm{\exp(T) - A}_2 \le 2\tau,
\end{equation*}
and Corollary~\ref{cor:approxhnd}.
\end{proof}

The case of complex spectra is more
difficult. A common approach to obtain rational approximations is
to consider the contour integral representation
\begin{equation} \label{eq:contour}
  \exp(T) = \frac{1}{2\pi \mathrm{i}} \int_\Gamma  e^z (zI - T)^{-1} \,\mathrm{d}z,
\end{equation}
where $\Gamma$ is a contour enclosing the spectrum of $T$.
Applying numerical quadrature with $s$ points to~\eqref{eq:contour} yields an approximation $r(T) \approx \exp(T)$, where $r$ is a rational function of degree $s$ and hence $r(T)$ has displacement rank at most $2s+1$ by Theorem~\ref{thm:drank_r(T)}. In the absence of information on the spectrum of $T$, one might choose $\Gamma$ to be a circle of radius larger than $\|T\|_2$. Applying the composite trapezoidal rule yields exponential convergence but the convergence rate deteriorates as $\|T\|_2$ grows; see, e.g.,~\cite{Trefethen2014}. Sometimes, much better results can be obtained if more information on the spectrum is available. For example, if $A$ is sectorial (that is, its eigenvalues are contained in a sector strictly contained in the left half plane), L\'opez-Fern\'andez et al.~\cite[Thm. 1]{LopezFernandez2006} establish a bound of the form 
\begin{equation*}
\|\exp(T) - r(T) \|_2 \le C\, \gamma^s
\end{equation*}
where the rate $0<\gamma < 1$ depends on the opening angle of the sector but not on the norm of $A$. The rational function $r$ has degree $s$ and is obtained by applying quadrature to~\eqref{eq:contour} with $\Gamma$ chosen to be the left branch of a hyperbola. Analogous results hold for the case that the numerical range of $A$ is contained in the open left half complex plane; see~\cite[Sec. 4.2]{Guttel2013a} for an overview. 

If $T$ has large norm and eigenvalues on or close to the imaginary axis then it cannot be expected that 
$\exp(T)$ admits a good approximation of low displacement rank. In
turn, the methods developed in this paper are not efficient, i.e.,
of quadratic complexity in $n$, for this type of matrices.
The following example illustrates such a situation. 

\begin{figure}
    \begin{center}
        \includegraphics[width=.5\textwidth]{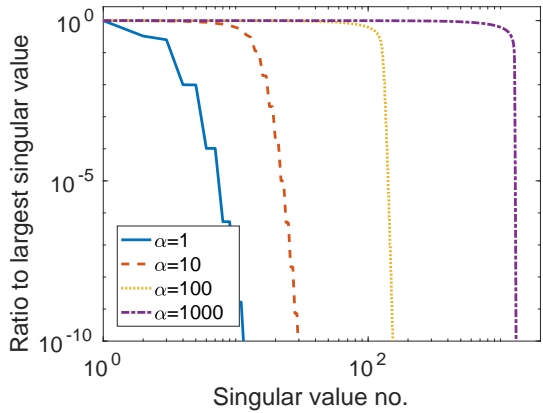}
    \end{center}
    \caption{Decay of the singular values of $\nabla\big( \exp(\alpha T) \big)$ for
    $\alpha \in \{1, 10, 100, 1000\}$ and the Toeplitz matrix $T$
    from Example~\ref{ex:oscillation}.  The plots show the singular value ratios
    $\sigma_j/\sigma_1$ vs. $1 \le j \le 2000$ for each choice of $\alpha$.\label{fig:oscillation}}
\end{figure}

\begin{example} 
 Let $T \in \R^{2000\times 2000}$ be a skew-symmetric Toeplitz matrix~\eqref{eq:toeplitz} with $t_{1} = 1$, $t_{-1} = -1$ and all other entries zero.
    The following table shows the numerical displacement rank of
    $\exp(\alpha T)$, which we compute as the number of singular values of $\nabla(\exp(\alpha T))$ larger than $10^{-10}$ times the first singular value (see also Figure~\ref{fig:oscillation}):
 \[
 \begin{array}{ccccc} \hline 
  \alpha & 1 & 10 & 100 & 1\,000 \\
  \text{num. displacement rank} & 11 &  29 &  153 &    1309 \\ \hline
 \end{array}
 \]
 Clearly, as $\alpha$ grows it becomes increasingly difficult to approximate $\exp(\alpha T)$ by a Toeplitz-like matrix.
\label{ex:oscillation}
\end{example}

\section{Algorithmic tools}
\label{sec:algtools}

In Section~\ref{sec:s_and_s}, we will adapt two variants of the scaling and
squaring method for Toeplitz matrices.  The algorithmic tools
needed for an efficient implementation are the same for both,
and we will describe them in this section without making reference to either algorithm.

\subsection{Norm estimation and scaling}
\label{sec:toep_norm}

The first step of the scaling and squaring method consists of determining a scaling parameter $\rho \in \N$ such that
$\norm{2^{-\rho} T} \approx 1$, which necessitates computing
$\norm{T}$ or an estimate thereof.

Since matrix-vector products with a Toeplitz matrices can be carried
out in $\OOM(n \log n)$ operations, the power method for estimating
$\norm{T}_2$ can be implemented with the same complexity per iteration.
Alternatively, $\norm{T}_1$ can be computed at little cost.
\begin{lemma}
Let $T \in \C^{n \times n}$ be a Toeplitz matrix. Then $\norm{T}_1$ can be
computed in $\OOM(n)$ operations.
\end{lemma}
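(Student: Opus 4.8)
The plan is to exploit the fact that all columns of a Toeplitz matrix are cyclic shifts of a single vector, so that the $1$-norm — which is the maximum of the absolute column sums — can be computed incrementally. First I would write the entries defining $T$ as the vector $(t_{-n+1},\dots,t_{-1},t_0,t_1,\dots,t_{n-1}) \in \C^{2n-1}$, and observe that the $j$th column of $T$ consists of the $n$ consecutive entries $t_{-j+1},t_{-j+2},\dots,t_{-j+n}$ (using the natural indexing from~\eqref{eq:toeplitz}), i.e., column $j{+}1$ is obtained from column $j$ by dropping the top entry $t_{-j+1}$ and appending the bottom entry $t_{-j+n}$. Hence, if $c_j$ denotes the $1$-norm of column $j$, then $c_{j+1} = c_j - |t_{-j+1}| + |t_{-j+n}|$, so all $n$ column sums can be generated in $\OOM(n)$ operations once $c_1 = \sum_{k=0}^{n-1} |t_k|$ has been computed, itself an $\OOM(n)$ cost.

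The remaining step is to take the maximum of the $c_j$, which is another $\OOM(n)$ pass, and conclude $\norm{T}_1 = \max_{1\le j\le n} c_j$ by the standard characterization of the induced $1$-norm as the largest absolute column sum. Putting the three phases together — forming $c_1$, running the recurrence, and taking the maximum — gives the claimed $\OOM(n)$ complexity.

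I do not anticipate a genuine obstacle here; the only point requiring a little care is bookkeeping the index ranges so that the entries $t_{-j+1}$ removed and $t_{-j+n}$ added at each step of the recurrence stay within the available data $t_{-n+1},\dots,t_{n-1}$, which they do precisely because $j$ ranges over $1,\dots,n-1$ in the recurrence. One could also phrase the whole argument via a sliding window of width $n$ over the concatenated coefficient vector, which makes the $\OOM(n)$ bound immediate; I would present whichever formulation is shortest. (An entirely analogous remark would let one compute $\norm{T}_\infty$ in $\OOM(n)$ operations as well, by working with row sums, though that is not needed for the scaling step.)
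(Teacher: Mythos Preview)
Your proposal is correct and follows essentially the same approach as the paper: compute the first column sum directly, then update the column sums via a one-term-in, one-term-out recurrence, and take the maximum. One small bookkeeping slip: moving from column $j$ to column $j{+}1$ actually \emph{prepends} the new top entry $t_{-j}$ and \emph{drops} the old bottom entry $t_{n-j}$, so the recurrence should read $c_{j+1}=c_j-|t_{n-j}|+|t_{-j}|$ rather than the version you wrote; this does not affect the complexity claim.
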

\begin{proof}
Let us denote the first column and row of $T$ by $c$ and $r$, respectively,
and set $\mu_j \defby \norm{T e_j}_1$, $1 \le j \le n$.  From the
structure of $T$ we find that
\begin{equation*}
    \mu_1 = \sum_{i=1}^n |c_i|, \quad
        \mu_{j+1} = \mu_j - |c_{n - j + 1}| + |r_j|
        \; \text{for $1 \le j < n$},
\end{equation*}
and hence $\norm{T}_1 = \max_{1 \le j \le n} \{\mu_j\}$ can be
computed in $\OOM(n)$.
\end{proof}

Once the scaling parameter $\rho$ is determined, the generator of $T$
is scaled accordingly, which obviously requires only $\OOM(n)$ operations.

\subsection{Fast solution of Toeplitz and Toeplitz-like systems}
\label{sec:tl_solve}

In order to compute generators for rational functions of Toeplitz
matrices, we need to solve linear systems of equations with
Toeplitz and Toeplitz-like matrices; see Theorem~\ref{thm:drank_r(T)}.
We will now briefly summarize a
well established technique for the solution of such systems in
quadratic time (the ``GKO algorithm''~\cite{Gohberg1995}).

Let $A \in \C^{n \times n}$ be a Toeplitz-like matrix of displacement rank $r \ll n$, so that $A$
satisfies the matrix Stein equation~\eqref{eq:stein} with a low-rank
right hand side.  It is well known (see, e.g.,~\cite[sec.~0.2]{Gohberg1995} and the
references therein) that $T$ also satisfies numerous other matrix equations, including
the Sylvester equation
\begin{equation}
\label{eq:tl_sylvester}
    \Delta_{Z_1,Z_{-1}}(T) \defby Z_1 T - T Z_{-1} = \tilde G \tilde B^*,
\end{equation}
with low-rank right-hand side and $Z_\delta \defby Z + \delta e_1 e_n^*$.

One could directly apply the generalized Schur
algorithm~\cite{Kailath1995} to either representation~\eqref{eq:stein}
or~\eqref{eq:tl_sylvester} in order to solve linear systems with $A$ in $\OOM(rn^2)$ operations,
but without further assumptions on $A$, such as well-conditioned leading principal submatrices, or more
involved algorithmic techniques~\cite{Chandrasekaran1998} a numerically stable
solution is not guaranteed.

Instead, we propose to use a transformation~\cite[prop.~3.1]{Gohberg1995} (see
also~\cite{Heinig1995})
of~\eqref{eq:tl_sylvester} to a \emph{Cauchy-like} Sylvester
displacement equation
\begin{equation}
\label{eq:cl_sylvester}
    D_1 C  - C D_2 = \hat{G} \hat{B}^*,
\end{equation}
with the same displacement rank and where $D_1, D_2$ are
diagonal matrices. The transformation
between~\eqref{eq:tl_sylvester} and~\eqref{eq:cl_sylvester} involves
only FFTs and diagonal scalings.  The fact that the Sylvester
operator matrices $D_1$ and $D_2$ are now diagonal allows for pivoting
within the generalized Schur algorithm, requiring in total
$\OOM(r n^2)$ operations.  Combined with further safeguarding techniques
one obtains an efficient algorithm for the solution of linear systems
with $A$ that enjoys similar stability properties as traditional
Gaussian elimination with pivoting~\cite{Gu1998}.

For our purpose of evaluating rational matrix functions of Toeplitz
matrices only one minor technicality needs to be resolved:  The
generator matrices $G,B$ with respect to the matrix Stein
equation~\eqref{eq:stein} need to be transformed to generator matrices
with respect to the Sylvester equation~\eqref{eq:tl_sylvester}. The following lemma shows that the corresponding displacement rank increases at most by two.

\begin{lemma} \label{lem:converttoeplitz}
Let $A \in \C^{n \times n}$ be a Toeplitz-like matrix, and let $\smb
c\\\alpha \sme$ and $\smb r & \alpha\sme$ denote its last column and row,
respectively.  If $T$ denotes the Toeplitz matrix
with first column $\smb \alpha \\ c \sme$ and first row $\smb \alpha
& r\sme$ then
\begin{equation*}
    \Delta_{Z_1, Z_{-1}}(A) = (\nabla(T) - \nabla(A)) Z_{-1}.
\end{equation*}
\end{lemma}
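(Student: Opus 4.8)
The plan is to verify the identity by direct computation, exploiting the fact that $Z_1 = Z + e_1 e_n^*$ and $Z_{-1} = Z - e_1 e_n^*$ differ from the plain shift $Z$ only in a single corner entry. First I would expand the left-hand side as
\[
  \Delta_{Z_1,Z_{-1}}(A) = Z_1 A - A Z_{-1}
    = (Z A - A Z) + e_1 e_n^* A + A e_1 e_n^*,
\]
so that the Sylvester displacement with the modified shifts is the plain commutator $ZA - AZ$ plus two rank-one corrections built from the last row $\smb r & \alpha \sme$ of $A$ (appearing as $e_n^* A$) and the first column of $A$ (appearing as $A e_1$, via $e_1 e_n^*$ it feeds into the last column). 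The point of introducing the auxiliary Toeplitz matrix $T$ is that it is precisely the Toeplitz matrix whose generator data ``matches'' the boundary data of $A$: its first column $\smb \alpha\\ c\sme$ is the reversal/shift of the last column of $A$, and its first row $\smb \alpha & r\sme$ matches the last row of $A$.

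Next I would compute $\nabla(T) - \nabla(A) = (T - ZTZ^*) - (A - ZAZ^*) = (T - A) - Z(T-A)Z^*$ and then right-multiply by $Z_{-1} = Z - e_1 e_n^*$. The key observation is that $Z^* Z_{-1}$ simplifies nicely: $Z^* Z = I - e_n e_n^*$ (the shift is ``almost unitary''), and $Z^* e_1 = 0$, so $Z^* Z_{-1} = Z^* Z = I - e_n e_n^*$. Hence
\[
  \bigl((T-A) - Z(T-A)Z^*\bigr) Z_{-1}
    = (T-A) Z_{-1} - Z(T-A)(I - e_n e_n^*)
    = (T-A) Z_{-1} - Z(T-A) + Z(T-A) e_n e_n^*.
\]
Now $(T-A)Z_{-1} - Z(T-A) = (T-A)Z - Z(T-A) - (T-A) e_1 e_n^*$, and the commutator $[T-A, Z]$ — wait, it is $(T-A)Z - Z(T-A) = -[Z, T-A] = -(ZT - TZ) + (ZA - AZ)$. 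Since $T$ is Toeplitz, $ZT - TZ$ is supported only in the first row and last column (its displacement-type structure), and the remaining terms must be matched against the rank-one corrections from the first expansion.

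The main obstacle is bookkeeping the boundary terms: one must check that the ``first column'' and ``last row'' contributions of $A$ on the left-hand side are exactly reproduced by the three corner terms $-(T-A)e_1 e_n^*$, $+Z(T-A)e_n e_n^*$, and the non-Toeplitz part of $[Z,T]$ that survives. This is where the specific relationship between the entries of $T$ and the boundary entries of $A$ (namely that $T$'s first row equals $A$'s last row, and $T$'s first column is the shifted last column of $A$) must be used entry-by-entry; equivalently, one checks that $T - A$ has its last row and first column structured so that $(Z T Z^* - T) = \nabla(T)$ has the right rank-two pattern, and that the products with $e_1, e_n$ land in the correct positions. I would organize this by writing everything in terms of $\nabla(T)$ (which is explicitly rank $\le 2$ with the generator \eqref{eq:toeplitzgenerator}) and $\nabla(A)$, rather than tracking individual entries, and then confirm the cancellations in the four ``corners'' of the matrix. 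Once the corner terms are matched, the interior entries agree automatically because both sides are then equal as the plain commutator $ZA - AZ$ plus identical rank-one pieces.
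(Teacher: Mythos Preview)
Your approach---expanding both sides directly and matching terms---is viable in principle, but it is noticeably more laborious than the paper's argument, and your write-up does not actually carry the matching to completion.  The paper's proof rests on a single simplification that you miss: since $Z_{-1}$ is unitary (one checks $Z_{-1} Z_{-1}^* = ZZ^* + e_1 e_1^* = I$), the claimed identity is equivalent, after right-multiplying by $Z_{-1}^*$, to
\[
  \Delta_{Z_1,Z_{-1}}(A)\,Z_{-1}^* \;=\; Z_1 A Z_{-1}^* - A \;=\; \nabla(T) - \nabla(A).
\]
This converts the Sylvester displacement into a Stein-type expression $Z_1 A Z_{-1}^* - A$, directly comparable with $\nabla(A) = A - ZAZ^*$.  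Expanding $(Z + e_1 e_n^*)\,A\,(Z^* - e_n e_1^*)$ gives $ZAZ^*$ plus three rank-one terms involving only $A e_n$ and $e_n^* A$, i.e.\ only the last column and last row of $A$; these assemble into the matrix supported on the first row and column that the paper identifies as $\nabla(T)$.  No commutator $ZA - AZ$ and no first-column data $A e_1$ ever enter, so the ``corner bookkeeping'' you flag as the main obstacle is bypassed entirely.

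By contrast, your route through $(ZA - AZ) + e_1 e_n^* A + A e_1 e_n^*$ introduces the first column $A e_1$, which is not part of the hypotheses.  It does in fact cancel against an identical term on the other side---expanding $-\nabla(A) Z_{-1}$ produces $A e_1 e_n^*$ as well, since $A Z_{-1} = AZ - A e_1 e_n^*$---but you never identify this cancellation, and without making it explicit the proposal looks as though it requires information about $A$ that the lemma does not supply.  Once that cancellation is recorded, the remaining comparison reduces to verifying $\nabla(T) Z_{-1} = e_1 (e_n^* A) + (Z A e_n) e_n^*$, which is a straightforward check.  So your plan can be completed, but the paper's $Z_{-1}^*$ trick is the cleaner organising idea.
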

\begin{proof} One directly calculates that
\begin{align*}
\Delta_{Z_1, Z_{-1}}(A) Z_{-1}^*
& = Z_1 A Z_{-1}^* - A
= (Z + e_1 e_n^*) A (Z^* - e_n e_1^*) - A \\
& = Z A Z^* - A + e_1 e_n^* A Z^* - Z A e_n e_1^* - e_1 e_n^* A e_n e_1^*\\
& = - \nabla(A) +
    \begin{bmatrix}
        \alpha & r\\
        c      & 0_{n-1,n-1}
    \end{bmatrix}
= - \nabla(A) + \nabla(T).
\end{align*}
\end{proof}

To compute a $\Delta_{Z_1,Z_{-1}}$ generator for $A$ using Lemma~\ref{lem:converttoeplitz}, one needs to reconstruct the last column and row of $A$ from a generator with respect to $\nabla$. According to~\eqref{eq:disptoeplitz} this requires $2r$ matrix-vector multiplications with triangular Toeplitz matrices and can hence be computed in $\OOM(r n \log n)$ operations. 
We can summarize the preceding discussion as follows.

\begin{corollary}
Let $A \in \C^{n \times n}$ be a Toeplitz-like matrix of displacement rank $r$. Then linear systems with $A$ can
be solved in $\OOM(rn^2)$ operations.
\end{corollary}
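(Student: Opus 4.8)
The plan is to chain together the ingredients developed above and bookkeep the cost of each stage. We are handed a generator $(G,B)$ with $G,B\in\C^{n\times r}$ satisfying the Stein equation~\eqref{eq:stein}, and we want to solve $Ax=f$. The argument is essentially a matter of verifying that every preprocessing and postprocessing step is of lower order than the $\OOM(rn^2)$ cost of the core elimination.

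First I would convert the Stein-type generator into a Sylvester-type generator with respect to $\Delta_{Z_1,Z_{-1}}$ as in~\eqref{eq:tl_sylvester}. By Lemma~\ref{lem:converttoeplitz} we have $\Delta_{Z_1,Z_{-1}}(A)=(\nabla(T)-\nabla(A))Z_{-1}$, where $T$ is the Toeplitz matrix built from the last column and row of $A$. Since $\nabla(T)$ has rank at most two and $\nabla(A)=GB^*$ has rank $r$, the Sylvester displacement has rank at most $r+2$, and an explicit generator for it is assembled from $GB^*$ together with the last column and row of $A$. Reconstructing that last column and row from $(G,B)$ via~\eqref{eq:disptoeplitz} costs $2r$ triangular-Toeplitz matrix-vector products, i.e.\ $\OOM(rn\log n)$ operations, which is dominated by the target complexity.

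Next I would apply the transformation of~\cite[Prop.~3.1]{Gohberg1995} (see also~\cite{Heinig1995}) to pass from the Sylvester equation~\eqref{eq:tl_sylvester} to the Cauchy-like equation~\eqref{eq:cl_sylvester} with diagonal operator matrices $D_1,D_2$. This transformation multiplies the system matrix by FFT-and-diagonal factors and scales the generators by diagonal matrices, so it preserves the displacement rank (at most $r+2$) and costs only $\OOM(rn\log n)$ operations; the right-hand side $f$ is transformed in the same way. On the resulting Cauchy-like system one runs the generalized Schur algorithm: because $D_1$ and $D_2$ are diagonal, row and column permutations commute with the displacement operator, so partial pivoting is available. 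Each of the $n$ elimination steps updates $\OOM(n)$ entries of the $\OOM(r)$-column generators, giving $\OOM(rn^2)$ operations in total, after which undoing the two transformations recovers $x$ in a further $\OOM(rn\log n)$ operations.

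I expect that the only genuinely delicate point is not the complexity accounting but numerical stability: applied directly to~\eqref{eq:stein} or~\eqref{eq:tl_sylvester}, the generalized Schur algorithm need not be stable, and it is precisely the passage to the Cauchy-like form~\eqref{eq:cl_sylvester}---together with the pivoting it enables and the generator-safeguarding of~\cite{Gu1998}---that yields backward stability comparable to Gaussian elimination with partial pivoting. Granting that, the total cost is $\OOM(rn\log n)+\OOM(rn\log n)+\OOM(rn^2)+\OOM(rn\log n)=\OOM(rn^2)$, which is the claim.
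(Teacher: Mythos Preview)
Your proposal is correct and follows essentially the same route as the paper: the corollary is stated there as a summary of the preceding discussion in Section~\ref{sec:tl_solve}, which proceeds exactly by converting the Stein generator to a $\Delta_{Z_1,Z_{-1}}$ generator via Lemma~\ref{lem:converttoeplitz} (at cost $\OOM(rn\log n)$ for extracting the last row and column), transforming to Cauchy-like form by FFTs and diagonal scalings, and then running the pivoted generalized Schur algorithm of~\cite{Gohberg1995,Gu1998} at cost $\OOM(rn^2)$. Your write-up is a faithful and slightly more explicit expansion of that same argument.
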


\subsection{Computing generators of Toeplitz matrix polynomials}
\label{sec:polyval}

In Lemma~\ref{lem:power_generators} we computed explicit expressions for
generators of monomials $T, T^2, T^3, \dotsc$ .  Within these
expressions, one needs to (repeatedly) apply the matrices
\begin{equation*}
    (Z - I) T (Z - I)^{-1} \quad \text{and} \quad
    (Z - I) T^* (Z - I)^{-1}
\end{equation*}
to a given canonical Toeplitz generator $(G,B)$.  Note that applying
$(Z - I)^{-1}$ to a vector amounts simply to computing the vector of
its cumulative sums, and that the application of $Z - I$ to a vector
can be evaluated with $n-1$ subtractions.  Hence, both operations
require $\OOM(n)$ operations.  Finally, as mentioned in
Section~\ref{sec:toeplitz}, matrix-vector products with $T$ and $T^*$
can be evaluated in $\OOM(n \log n)$ operations, so that we have the
following result.

\begin{corollary}
Let $T \in \R^{n,n}$ be a Toeplitz matrix, then a set of generators
for the monomials $T, T^2, \dotsc, T^s$ can be computed with $\OOM(sn
\log n)$ operations.
\end{corollary}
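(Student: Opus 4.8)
The plan is to track the cost of building the generators $(G_i, B_i)$, $1 \le i \le s$, exactly as they are defined in the recurrences~\eqref{eq:power_update_G}--\eqref{eq:power_update_B} of Lemma~\ref{lem:power_generators}, and to show that the whole computation amounts to $\OOM(s)$ applications of the two linear maps $P_G = (Z-I)T(Z-I)^{-1}$ and $P_B = (Z-I)T^*(Z-I)^{-1}$ to vectors, each of which costs $\OOM(n\log n)$. First I would observe that the new columns appearing in $G_{i+1}$ relative to $G_i$ are obtained by a single application of $P_G$ to the already-available columns $P_G^{i-1}G$ (the leading block of $G_i$) and to the already-available column $-P_G^{i-1}e_1$, and symmetrically for $B_{i+1}$ via $P_B$. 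Thus, maintaining the ``frontier'' blocks $P_G^{i}G$ and $P_G^{i}e_1$ (and their $B$-counterparts), each step $i \to i+1$ requires only a constant number of applications of $P_G$, $P_B$ to $\OOM(1)$ vectors, since $G$ and $B$ each have only two columns by~\eqref{eq:toeplitzgenerator}.

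Next I would quantify the cost of a single application of $P_G$. As already noted in the paragraph preceding the corollary, applying $(Z-I)^{-1}$ to a vector is a cumulative-sum operation in $\OOM(n)$, applying $Z-I$ costs $\OOM(n)$, and the Toeplitz matrix-vector product with $T$ (or $T^*$) costs $\OOM(n\log n)$ via the FFT, as recalled in Section~\ref{sec:toeplitz}. Hence one application of $P_G$ or $P_B$ to a vector costs $\OOM(n\log n)$. Combining this with the previous paragraph, passing from the generators of $T^i$ to those of $T^{i+1}$ costs $\OOM(n\log n)$, and summing over $i = 1, \dotsc, s-1$ yields the claimed $\OOM(sn\log n)$ bound. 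I would also remark that if one literally stores all $\OOM(i)$ columns of $G_{i+1}$ without recognizing the nesting, the arithmetic cost is unchanged because the extra columns are just copies of columns already computed; only the storage, not the flop count, is affected, and in any case the total number of distinct vectors produced over all $s$ steps is $\OOM(s)$.

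I do not anticipate a serious obstacle here: the statement is essentially a bookkeeping corollary of Lemma~\ref{lem:power_generators} together with the elementary FFT-based complexity facts recalled earlier. The one point that needs a little care is to make sure the recurrence is organized so that no step recomputes a power $P_G^j G$ from scratch — that is, to present the computation as an incremental scheme that reuses the previously computed frontier blocks — so that the per-step cost is genuinely $\OOM(n\log n)$ rather than $\OOM(i\,n\log n)$; but even the naive bound $\sum_{i=1}^{s} \OOM(i\, n\log n) = \OOM(s^2 n\log n)$ would only cost an extra factor of $s$, and the incremental organization removing it is immediate. I would therefore keep the proof short, citing the recurrences of Lemma~\ref{lem:power_generators}, the $\OOM(n\log n)$ Toeplitz matrix-vector product, and the $\OOM(n)$ cost of $(Z-I)^{\pm 1}$, and concluding by summation.
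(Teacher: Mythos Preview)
Your proposal is correct and follows essentially the same approach as the paper: cite the explicit recurrences of Lemma~\ref{lem:power_generators}, observe that each step requires a constant number of applications of $P_G$ and $P_B$ to vectors, and use the $\OOM(n\log n)$ cost of a Toeplitz matrix-vector product together with the $\OOM(n)$ cost of $(Z-I)^{\pm 1}$. Your treatment is in fact more explicit than the paper's, which states the corollary immediately after noting these costs without spelling out the incremental reuse of the frontier blocks; your care in organizing the recurrence so that no power $P_G^j G$ is recomputed from scratch is exactly the right point to make.
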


Because of the nested structure of the monomial generators
(cf.~\eqref{eq:power_update_G}--\eqref{eq:power_update_B}), only the
generator $(G_s, B_s)$ for the leading monomial $T^s$ is actually
needed for the evaluation of $p(T) \defby \sum_{k=0}^s a_k T^k$.  A
generator for $p(T)$ can be computed by appropriate linear combination
of the block columns of $G_s$, i.e., there exists a matrix $X \in
\R^{3s-1,3s-1}$ defined through the coefficients $a_0, \dotsc, a_s$,
such that $(G_s X, B_s)$ is a generator for $p(T)$.  For example, if we set
\begin{equation*}
    X =
    \begin{bmatrix}
        a_2 I_2 & 0       & 0\\
        0       & a_2 I_2 & 0\\
        0       & 0       & a_2\\
    \end{bmatrix}
    +
    \begin{bmatrix}
        0       & a_1 I_2 & 0\\
        0       & 0       & 0\\
        0       & 0       & 0\\
    \end{bmatrix},
\end{equation*}
then $(G_2 X, B_2)$ is a generator for $a_2 T^2 + a_1 T$.

Alternatively, a Horner-like scheme can be used to
compute a generator for $p(T)$.  Let $T_k$, $0\le k \le s$ be the
$k$th Horner polynomial, defined via the recursion
\begin{equation*}
    T_0 \defby a_s I, \quad
        T_{k} \defby T T_{k-1} + a_{s-k} I \: \text{for $1 \le k \le s$},
\end{equation*}
then $T_k$ is the Schur complement of $-I$ in the embedding
\begin{equation}
\label{eq:horner_embedding}
    M =
    \begin{bmatrix}
        -I    & T_{k-1} \\
         T    & a_k I
    \end{bmatrix}.
\end{equation}
Using similar arguments and techniques as for the evaluation of
monomials in $T$, it follows that the evaluation of $p(T)$ based
on~\eqref{eq:horner_embedding} can be carried out in $\OOM(sn \log n)$
operations.  In contrast to the evaluation based on monomials of $T$,
the resulting generator has length $2s$.

\subsection{Evaluating rational approximants by solving Toeplitz-like
systems}
\label{sec:rat_eval}

We now turn to the computation of generators for rational functions of
$T$. Let $r(z) = \frac{p(z)}{q(z)}$ be a rational function, and let
$(G_p, B_p)$ and $(G_q, B_q)$ be generators for $p(T)$ and $q(T)$,
respectively; see Section~\ref{sec:polyval}. From equations~\eqref{eq:rat_gen_G}--\eqref{eq:rat_gen_B}, we see 
that a generator for $r(T) = q(T)^{-1} p(T)$ is found by solving the linear
Toeplitz-like systems
\begin{equation}
\label{eq:ratsolvesystems}
    q(T)^{-1} (Z-I)^{-1}
    \begin{bmatrix}
        G_q & G_p
    \end{bmatrix}
    \quad
    \text{and}
    \quad
    q(T)^{-*} (Z-I)^{-1}
    \begin{bmatrix}
        B_q & e_1
    \end{bmatrix}.
\end{equation}
In total there are $2 \deg(q) + \deg(p) + 1$ right hand sides to solve
for, and since the displacement rank of $q(T)$ is at most $2 \deg(q)$,
the techniques outlined in Section~\ref{sec:tl_solve} yield the
following result.

\begin{corollary}
Let $T \in \C^{n\times n}$ be a Toeplitz matrix, and $r(z) =
\frac{p(z)}{q(z)}$ a rational function of degree $s = \max\{\deg(p),
\deg(q)\}$.  Then a generator for $r(T) = q(T)^{-1} p(T)$ can be
computed with $\OOM(s^2n^2)$ operations.
\end{corollary}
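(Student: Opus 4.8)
The plan is to simply add up the cost of the pipeline described in Sections~\ref{sec:tl_solve}, \ref{sec:polyval}, and~\ref{sec:rat_eval}, whose key steps are collected in~\eqref{eq:ratsolvesystems} and~\eqref{eq:rat_gen_G}--\eqref{eq:rat_gen_B}. First I would use the construction of Section~\ref{sec:polyval} to obtain generators $(G_p,B_p)$ and $(G_q,B_q)$ for $p(T)$ and $q(T)$ in $\OOM(sn\log n)$ operations; by Theorem~\ref{thm:drank_p(T)} these generators may be taken with $\OOM(s)$ columns, and in particular $q(T)$ is a Toeplitz-like matrix of displacement rank at most $2\deg(q)\le 2s$. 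To make the fast solver of Section~\ref{sec:tl_solve} applicable, I would then pass from the $\nabla$-generator of $q(T)$ to a $\Delta_{Z_1,Z_{-1}}$-generator via Lemma~\ref{lem:converttoeplitz}; this requires reconstructing the last row and column of $q(T)$, which by~\eqref{eq:disptoeplitz} costs $\OOM(s)$ triangular Toeplitz matrix-vector products, i.e.\ $\OOM(sn\log n)$ operations, and raises the displacement rank by at most two.

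Next I would solve, as in~\eqref{eq:ratsolvesystems}, the Toeplitz-like linear systems with coefficient matrices $q(T)$ and $q(T)^*$ — the latter having the same displacement rank $\OOM(s)$ — for the right-hand sides $(Z-I)^{-1}[G_q\ \ G_p]$ and $(Z-I)^{-1}[B_q\ \ e_1]$. Forming these right-hand sides costs only $\OOM(sn)$ operations, since $(Z-I)^{-1}$ acts by cumulative summation, and their total number is $2\deg(q)+\deg(p)+1=\OOM(s)$. By the GKO-based algorithm of Section~\ref{sec:tl_solve}, each solve with a displacement-rank-$\OOM(s)$ matrix costs $\OOM(sn^2)$ operations, so the $\OOM(s)$ solves amount to $\OOM(s^2n^2)$ operations in total (re-running the factorization for each right-hand side already suffices). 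Finally, assembling the generator $(G,B)$ of $r(T)$ from~\eqref{eq:rat_gen_G}--\eqref{eq:rat_gen_B} requires applying $(Z-I)$ (at $\OOM(n)$ each) and the Toeplitz-like matrix $p(T)^*$ of displacement rank $\OOM(s)$ (at $\OOM(sn\log n)$ per matrix-vector product) to the $\OOM(s)$ vectors produced by the second family of solves, contributing $\OOM(s^2 n\log n)$. Summing all contributions, the dominant term is $\OOM(s^2n^2)$, which proves the claim.

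The only point requiring care is the accounting in the solve step: one must invoke Theorem~\ref{thm:drank_p(T)} to know that $q(T)$ has displacement rank $\OOM(s)$, rather than merely that it is Toeplitz-like, so that the GKO solver runs in $\OOM(sn^2)$ rather than $\OOM(n^3)$; combined with the observation that~\eqref{eq:ratsolvesystems} has only $\OOM(s)$ right-hand sides, this yields the stated complexity. Everything else is a routine addition of costs already established in this section.
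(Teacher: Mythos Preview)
Your proposal is correct and follows essentially the same approach as the paper: the paper's argument is precisely that~\eqref{eq:ratsolvesystems} involves $\OOM(s)$ right-hand sides, that $q(T)$ has displacement rank at most $2\deg(q)\le 2s$ by Theorem~\ref{thm:drank_p(T)}, and that the GKO-type solver of Section~\ref{sec:tl_solve} therefore handles all solves in $\OOM(s^2n^2)$ operations, which dominates the remaining $p(T)^*$-multiplications. Your write-up is simply more explicit about the auxiliary steps (generator conversion via Lemma~\ref{lem:converttoeplitz}, the cost of applying $(Z-I)^{\pm 1}$, and the assembly of~\eqref{eq:rat_gen_G}--\eqref{eq:rat_gen_B}), none of which changes the argument.
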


The dependence on $s^2$ in the above statement is nearly negligible in our
context, since the degrees of the Pad\'e approximants we will be using
are typically small, and never larger than thirteen.  Note also that
\eqref{eq:rat_gen_G}--\eqref{eq:rat_gen_B} involve matrix-vector
multiplications with $p(T)$ and $p(T^*)$, but the cost for these are
dominated by solving the linear systems~\eqref{eq:ratsolvesystems}.

\subsection{Evaluating rational approximants by partial fraction expansion}
\label{sec:pf_expansion}

Any rational function with simple poles can be expressed as a partial fraction
expansion
\begin{equation*}
    r(z) = \sum_{i=1}^m \frac{\beta_i}{z - \alpha_i} + p(z),
\end{equation*}
where $m$ is the number of poles $\alpha_i$ with residues $\beta_i$,
and $p$ is a polynomial.  Let $T \in \C^{n\times n}$ be a Toeplitz matrix,
and assume that none of the poles of $r$ is an eigenvalue of $T$.
Generators for $p(T)$ can be computed using the techniques described
in Section~\ref{sec:polyval}, and we now discuss the computation of
generators for the Toeplitz-like matrix 
\begin{equation}
\label{eq:pf_expansion}
    \sum_{i=1}^{m} \beta_i (T - \alpha_i I)^{-1}.
\end{equation}

First note that for any $\alpha \in \C$  the matrix $T_\alpha = T +
\alpha I$ is a Toepliz matrix as well, with the generator 
\begin{equation*}
G_\alpha =
\begin{bmatrix}
    t_0 + \alpha & 1\\
    t_1 & 0\\
    \vdots & \vdots\\
    t_{n-1} & 0
\end{bmatrix},
\quad
B_\alpha =
\begin{bmatrix}
    1 & 0\\
    0 & t_{-1}\\
    \vdots & \vdots\\
    0 & t_{-n+1}
\end{bmatrix}.
\end{equation*}
Hence, the evaluation of~\eqref{eq:pf_expansion} is simply the sum of
$m$ inverse Toeplitz matrices and we can therefore apply the result from Remark~\ref{remark:toeplitzinverse} and the related \emph{Gohberg-Semuncul formulas} (see, e.g.,~\cite{Huckle1998} for an overview) to compute a generator for~\eqref{eq:pf_expansion} by solving $\OOM(m)$ linear Toeplitz systems using the technique described in Section~\ref{sec:tl_solve}.

In the common case where $T$ is real, and the
expansion~\eqref{eq:pf_expansion} involves pairs of complex conjugates
shifts $\alpha, \bar{\alpha}$ and residues $\beta, \bar{\beta}$, then
\begin{equation*}
    \bar{\beta} (T - \bar{\alpha})^{-1} = \overline{\beta (T - \alpha)^{-1}},
\end{equation*}
so that a real generator for $\beta(T - \alpha)^{-1} + \bar{\beta} (T
- \bar{\alpha})^{-1}$ can be computed by means of solving two linear
  Toeplitz systems (instead of four).  So let $(G,B)$ be a generator for
$\beta(T - \alpha)^{-1}$, then
\begin{align*}
    & \nabla( \beta(T - \alpha)^{-1} + \bar{\beta} (T - \bar{\alpha})^{-1} )
    = \nabla( \beta(T - \alpha)^{-1} ) + \overline{\nabla( \beta(T - \alpha)^{-1} )}\\
    & = GB^* + \overline{GB^*} = 2 \real(G B^*)
    = 2 (\real(G) \real(B)^* + \imag(G) \imag(B)^*).
\end{align*}
In the case of a Pad\'e approximant, this implies that the number of
Toeplitz matrix inversions is roughly halved.  Of course, the
asymptotic cost is unchanged, and we summarize our findings as
follows.
\begin{corollary}
Let $T \in \C^{n \times n}$ be a Toeplitz matrix, and $r(z) = \sum_{i=1}^m
\beta_i (z - \alpha_i)^{-1}$ a rational function.  Then $r(T)$ can be
evaluated in $\OOM(m n^2)$ operations involving at most $2m+2$
solutions of linear Toeplitz systems.
\end{corollary}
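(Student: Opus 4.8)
The statement is an accounting corollary that assembles the tools developed in this section, so the plan is essentially careful bookkeeping. Write $T_i \defby T - \alpha_i I$ for $i = 1,\dots,m$; by hypothesis each $T_i$ is an invertible Toeplitz matrix, and its canonical displacement generator (of displacement rank at most two) is available in closed form from the entries of $T$, exactly as displayed above for $T_\alpha$. First I would compute, for each $i$, a generator of $\beta_i T_i^{-1}$; then concatenate these $m$ generators into a single generator for $r(T) = \sum_{i=1}^m \beta_i T_i^{-1}$, exploiting the linearity of $\nabla$; then add up the costs.

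For the first step I would invoke Remark~\ref{remark:toeplitzinverse} together with the more economical Gohberg--Semencul representation of a Toeplitz inverse (see~\cite{Huckle1998}): a rank-$\le 2$ generator of $T_i^{-1}$ is obtained by solving a fixed, small number of linear systems with the Toeplitz matrix $T_i$ and then applying $(Z-I)$ and $(Z-I)^{-1}$, each of which costs $\OOM(n)$ operations; scaling the resulting $G$-factor by $\beta_i$ then gives a generator of $\beta_i T_i^{-1}$. By Section~\ref{sec:tl_solve}, each such system is solved in $\OOM(n^2)$ operations: after the $\OOM(n\log n)$ conversion of Lemma~\ref{lem:converttoeplitz} to a Sylvester displacement equation -- which raises the displacement rank only to at most four -- the Cauchy-like (GKO) solver runs in $\OOM(rn^2)$ with $r \le 4$. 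Counting the solves, the Gohberg--Semencul route needs only two per inverse (rather than the four that the generic formula of Theorem~\ref{thm:inverse_tl}, which also involves systems with the transposed matrix $T_i^*$, would entail); over the $m$ terms this is at most $2m$ solves, and allowing a small constant of slack yields the stated bound of at most $2m+2$.

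For the second step, $\nabla(r(T)) = \sum_{i=1}^m \beta_i\,\nabla(T_i^{-1})$ shows that horizontally stacking the (scaled) $G$-factors and the $B$-factors produces a generator of $r(T)$ of displacement rank at most $2m$; this requires no further solves and only $\OOM(mn)$ bookkeeping, optionally followed by an $\OOM(m^2 n)$ compression via Algorithm~\ref{alg:gencompress}, which is dominated by $\OOM(mn^2)$ in the regime $m \lesssim n$ of interest. Collecting everything, the $m$ Toeplitz solves at $\OOM(n^2)$ apiece dominate, giving the overall $\OOM(mn^2)$ bound. The only genuinely delicate point -- and the thing to get right -- is the count of Toeplitz solves: one must use the explicit Gohberg--Semencul formulas for $T_i^{-1}$ (two solves per term) rather than the displacement-theoretic formula of Theorem~\ref{thm:inverse_tl}/Remark~\ref{remark:toeplitzinverse} as literally written, which would cost four solves per term; only with the former does the constant come out as claimed. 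Everything else is routine once the cost of a single Toeplitz-like solve is imported from Section~\ref{sec:tl_solve}.
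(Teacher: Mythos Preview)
Your proposal is correct and follows essentially the same route as the paper: the corollary is stated there without formal proof, merely as a summary of the preceding discussion, which (just as you do) computes a rank-$\le 2$ generator for each $\beta_i(T-\alpha_i I)^{-1}$ via the Gohberg--Semencul formulas at the cost of two Toeplitz solves, and then concatenates the $m$ resulting generators by linearity of $\nabla$. Your observation that the literal formulas of Remark~\ref{remark:toeplitzinverse} would cost four solves per term, and that one must instead use the two-solve Gohberg--Semencul variant to meet the stated $2m+2$ bound, is exactly right; neither you nor the paper pins down the extra ``$+2$'' precisely, but since $2m \le 2m+2$ this is harmless.
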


\subsection{Iterative squaring}
\label{sec:squaring}

At the final stage of scaling and squaring algorithms we have at hand
a rational approximation $r(2^{-\rho}T) \approx \exp(2^{-\rho} T)$,
and in order to obtain an approximation for $\exp(T)$, the initial
scaling is undone by squaring the matrix $r(2^{-\rho}T)$ $\rho$ times.

Let $(G, B)$ be a generator of length $s$ for the Toeplitz-like matrix
$A$.  Then a generator of length $2s+1$ for $A^2$ can be computed
using Theorem~\ref{thm:product_generators}.  The cost for computing
the new generator is dominated by the evaluation of the products $AG$
and $A^*B$.  Each of these products can be computed based on the
expansion~\eqref{eq:disptoeplitz}, involving $2s^2$ multiplications
with triangular Toeplitz matrices, resulting in an operation of
complexity $\OOM(s^2n \log n)$.

Each squaring operation effectively doubles the length of the
generator matrices, and hence after $\rho$ squaring operations we
would obtain generator matrices of length $\OOM(2^\rho s)$, which is
computationally feasible only for tiny values of $\rho$.
However, if the spectrum of $T$ allows for low degree rational
approximation of $\exp(T)$ (see Sec.~\ref{sec:rationalapproximation}), the
same is true for each scaled matrix $2^{-k}T$, $1 \le k \le \rho$.
Consequently, if the rational approximation to $\exp(2^{-\rho}T)$ is
such that
\begin{equation}
\label{eqn:intermed_ranks}
r(2^{-\rho}T)^{2^k} \approx \exp(2^{\rho-k}T),\quad 0 \le k \le \rho,
\end{equation}
then Corollary~\ref{cor:approx_exp} shows that each displacement $\nabla(r(2^{-\rho}T)^{2^k})$ is close to
a low rank matrix, and a generator compression
(Alg.~\ref{alg:gencompress}) applied after every squaring operation
will reduce the intermediate generator length back to $\OOM(s)$,
without compromising the approximation quality of the final
approximation to $\exp(T)$.  The computational cost for each of this
compressions is dominated by the matrix multiplications $AG$ and
$A^{*} B$. The following Corollary summarizes the discussion.

\begin{corollary}
Generators for the sequence $r(2^{-\rho}T)^2, \dotsc,
r(2^{-\rho}T)^{2^\rho}$ can be computed in $\OOM(\rho s^2 n \log n)$,
provided that each intermediate displacement
$\nabla(r(2^{-\rho}T)^2)$, $\dotsc$, $\nabla(r(2^{-\rho}T)^{2^\rho})$
has numerical rank $\OOM(s)$.
\end{corollary}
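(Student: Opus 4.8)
The plan is to realize the sequence $r(2^{-\rho}T)^2, \dotsc, r(2^{-\rho}T)^{2^\rho}$ by $\rho$ rounds of ``square, then compress'', and to carry, as an invariant over the rounds, the statement that the generator currently held has length $\OOM(s)$. I would set $A_0 \defby r(2^{-\rho}T)$ and, for $1\le k\le\rho$, let $A_k$ be the matrix produced at the end of the $k$th round, an approximation of $r(2^{-\rho}T)^{2^k}$. The base case is immediate: by Theorem~\ref{thm:drank_r(T)} the matrix $A_0$ is Toeplitz-like of displacement rank at most $2s+1 = \OOM(s)$, and a generator of that length is produced by the methods of Section~\ref{sec:rat_eval} or Section~\ref{sec:pf_expansion}.

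For the inductive step I would assume a generator $(G,B)$ of length $\ell = \OOM(s)$ for $A_{k-1}$ is available and apply Theorem~\ref{thm:product_generators} with both factors equal to $A_{k-1}$. This yields an exact generator for $A_{k-1}^2$ of length $2\ell+1 = \OOM(s)$, whose computation is dominated by forming the batched products $(Z-I)A_{k-1}(Z-I)^{-1}\smb G & e_1\sme$ and $(Z-I)A_{k-1}^{*}(Z-I)^{-1}\smb B & e_1\sme$. Applying $(Z-I)^{\pm 1}$ to a vector costs $\OOM(n)$, a matrix-vector product with $A_{k-1}$ or $A_{k-1}^{*}$ costs $\OOM(\ell n\log n)$ via~\eqref{eq:disptoeplitz}, and there are $\OOM(\ell)$ columns, so this step runs in $\OOM(\ell^2 n\log n) = \OOM(s^2 n\log n)$. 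Next I would feed the length-$(2\ell+1)$ generator to Algorithm~\ref{alg:gencompress}: since the hypothesis guarantees $\nabla\big(r(2^{-\rho}T)^{2^k}\big)$ has numerical rank $\OOM(s)$, so does $\nabla(A_{k-1}^2)$, hence the truncated SVD keeps only $\tilde r = \OOM(s)$ singular values and, by Theorem~\ref{thm:compresstoeplitz}, the compressed generator $(G_k,B_k)$ — of length $\OOM(s)$, restoring the invariant — represents a matrix $A_k$ whose distance to $A_{k-1}^2$ is controlled by $n$ times the discarded singular value. The cost of this compression is $\OOM(\ell^2 n)$ for the thin QR factorizations, $\OOM(\ell^3)$ for the small SVD of $R_G R_B^{*}$, and $\OOM(\ell^2 n)$ to assemble $\tilde G,\tilde B$, i.e.\ $\OOM(s^2 n + s^3)$, which is dominated by the squaring cost.

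Summing the $\OOM(s^2 n\log n)$ cost of each of the $\rho$ rounds then gives the claimed total $\OOM(\rho s^2 n\log n)$. I expect the genuinely delicate step — the one the complexity bookkeeping cannot bypass — to be the transfer of the numerical-rank hypothesis from the exact powers $r(2^{-\rho}T)^{2^k}$ to the iterates $A_k$ that are actually computed: a single compression perturbs the matrix by $\OOM(n)$ times the truncation threshold, and a squaring step roughly doubles an existing perturbation of a matrix of norm $\OOM(1)$, so one must pick the per-step threshold small enough (polynomially in $n$ and $\rho$) that the accumulated perturbation stays below the target tolerance; only then does Weyl's inequality, combined with the norm equivalence of Lemma~\ref{lem:norm_bound}, let me conclude that $\nabla(A_k)$ and $\nabla\big(r(2^{-\rho}T)^{2^k}\big)$ have comparable numerical rank, so that the ``length $\OOM(s)$'' invariant, and with it the complexity estimate, holds at every round.
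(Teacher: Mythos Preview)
Your proposal is correct and follows essentially the same approach as the paper: the corollary there is stated as a summary of the preceding discussion in Section~\ref{sec:squaring}, which describes exactly the ``square via Theorem~\ref{thm:product_generators}, then compress via Algorithm~\ref{alg:gencompress}'' loop you formalize, with the same per-round cost bookkeeping. Your final paragraph on transferring the numerical-rank hypothesis from the exact powers to the computed iterates is a legitimate concern that the paper does not address explicitly; it simply reads the hypothesis as applying to whatever matrices arise in the squaring phase.
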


\subsection{Reconstruction of Toeplitz-like matrices}
\label{sec:reconstruction}

As mentioned in Section~\ref{sec:toeplitz}, a Toeplitz-like matrix can
be reconstructed from a generator based on~\eqref{eq:reconstruction}.
We note next that this operation can be implemented efficiently.
\begin{lemma}
    \label{lem:alg_reconstruction}
Let $A \in \C^{n\times n}$ be a Toeplitz-like matrix, and 
$(G,B)$ a generator for $T$ of length $r$.  Then $A$ can be
computed from $(G,B)$ in $\OOM(rn^2)$ operations.
\end{lemma}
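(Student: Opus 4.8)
The plan is to exploit the Stein equation~\eqref{eq:stein} satisfied by $A$, rewritten entrywise as a two-term recurrence along diagonals. Setting $M = GB^* \in \C^{n\times n}$, the identity $A - ZAZ^* = M$ reads, componentwise,
\[
 A_{ij} = M_{ij} + A_{i-1,j-1},
\]
with the convention that the term $A_{i-1,j-1}$ is dropped whenever $i=1$ or $j=1$, since $ZAZ^*$ has vanishing first row and column and otherwise equals $A$ shifted down and to the right. Unrolling the recurrence gives $A_{ij} = \sum_{\ell\ge 0} M_{i-\ell,\,j-\ell}$, the sum running while both indices stay in $\{1,\dots,n\}$; this is precisely the reconstruction~\eqref{eq:reconstruction}--\eqref{eq:disptoeplitz}, now organised so that it can be evaluated cheaply.

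The first step is to form $M = GB^*$ explicitly. Each of its $n^2$ entries is an inner product of two vectors of length $r$ (the corresponding row of $G$ with the conjugate of the corresponding row of $B$), so this costs $\OOM(r n^2)$ operations. The second step fills in $A$ diagonal by diagonal: for each of the $2n-1$ diagonals $\{(i,j) : i-j = d\}$, one sweep of the recurrence $A_{ij} = M_{ij} + A_{i-1,j-1}$ produces all its entries in order with $\OOM(1)$ work per entry, for a total of $\OOM(n^2)$. Summing the two contributions yields the claimed $\OOM(r n^2)$.

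No step presents a genuine difficulty; the only pitfall to avoid is reconstructing $A$ by forming the $r$ products $L(g_j)U(b_j^*)$ in~\eqref{eq:disptoeplitz} via dense matrix multiplication, which would cost $\OOM(r n^3)$ (or $\OOM(r n^2 \log n)$ using FFT-based triangular Toeplitz products). The diagonal recurrence above --- equivalently, the observation that each product $C = L(g)U(b^*)$ itself satisfies $C_{ij} = C_{i-1,j-1} + g_i \bar b_j$ --- is what reduces the reconstruction beyond the unavoidable cost of forming $GB^*$ to $\OOM(n^2)$.
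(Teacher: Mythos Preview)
Your argument is correct and coincides with the paper's own proof: first form $M=GB^*$ in $\OOM(rn^2)$, then observe that each (sub/super)diagonal of $A$ is the cumulative sum of the corresponding diagonal of $M$, which is exactly your recurrence $A_{ij}=M_{ij}+A_{i-1,j-1}$ and costs $\OOM(n^2)$.
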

\begin{proof}
The number of operations for computing $D = GB^*$ is $\OOM(rn^2)$.
Then the expression
\begin{equation*}
    A = \sum_{k=0}^{n-1} Z^k D (Z^*)^k
\end{equation*}
can be evaluated in $\OOM(n^2)$ operations by noting that the $k$th
subdiagonal (superdiagonal) of $A$ is just the vector of cumulative
sums of the $k$th subdiagonal (superdiagonal) of $D$.
\end{proof}

\begin{remark} If only the diagonal of $A$ is of interest then the
    proof of Lemma~\ref{lem:alg_reconstruction} shows that this
    diagonal can be reconstructed by forming the cumulative sum of the
    vector $d = \diag(GB^*)$. The arithmetic cost for forming $d$ and thus the entire cost of extracting the diagonal of $A$ is $\OOM(nr)$.
    In fact any
    banded section of $A$ can be reconstructed by only computing the
    corresponding banded section of $GB^*$.
\end{remark}

\subsection{A remark on the use of the FFT}

Many of the operations discussed in this section involve or even
reduce to matrix-vector multiplication with Toeplitz and Toeplitz-like
matrices, cf.\@ Secs.~\ref{sec:toep_norm}, \ref{sec:polyval},
\ref{sec:rat_eval}, \ref{sec:squaring}.  The complexity of computing a
matrix-vector product $Ax$ for a Toeplitz-like matrix $A \in \C^{n,n}$
of displacement rank $r$ is $r n \log n$.  Although carrying out these
multiplications using the FFT is \emph{asymptotically} faster than
standard matrix-vector multiplication, an actual computational
advantage is gained only for sufficiently large matrix dimension $n$.

If that is not the case, it is preferable to resort to standard
matrix-vector multiplication.   Since $A$ is of displacement rank $r$,
the reconstruction of $A$ from a generator can be done in $\OOM(rn^2)$
operations, as described in Sec.~\ref{sec:reconstruction}.
Consequently, a single matrix-vector product can be computed using
standard multiplication in $\OOM(rn^2 + n^2)$ operations, and $\ell$
matrix-vector products with $A$ can be evaluated in $\OOM( (r+\ell)
n^2)$ operations.  Note that in this latter case, the cost for FFT based
multiplication is $\OOM(\ell r n \log n)$.

\section{Scaling and squaring algorithms for Toeplitz matrices}
\label{sec:s_and_s}

In Section~\ref{sec:rationalapproximation} we have shown that rational
approximations to the matrix exponential of a Toeplitz matrix $T$
enjoy low displacement rank, provided that $T$ is
negative real or sectorial.   We will now use scaling and
squaring algorithms that take advantage of this property.  Based on
the techniques presented in Section~\ref{sec:algtools}, the resulting
algorithms will require $\OOM(n^2)$ operations for computing
$\exp(T)$, which is optimal since the output is also of size $n^2$.

Denote by $r_{k,m}(z) = \frac{p_{k,m}(z)}{q_{k,m}(z)}$ the
$(k,m)$-Pad\'e approximant to the exponential function, meaning that
the numerator polynomial is of degree $k$, and the denominator
polynomial of degree $m$. Scaling and squaring algorithms take
advantage of the fact that Pad\'e approximations are very accurate
close to origin.  An input matrix $A$ is thus scaled by a power of
two, so that $\norm{2^{-\rho}A} \lessapprox 1$, and then the Pad\'e
approximant $r_{k,m}(2^{-\rho}A)$ is computed.  Finally, an
approximation to $\exp(A)$ is obtained by squaring the result repeatedly,
viz.
\begin{equation*}
    \exp(A) \approx  r_{k,m}(2^{-\rho}A)^{2^{\rho}},
\end{equation*}
using the identity $\exp(A) = \exp(\sigma^{-1}A)^\sigma$, $\sigma \in
\C\setminus\{0\}$.

Different strategies for choosing the scaling parameter $\rho$ and the
Pad\'e degree $(k,m)$ yield different methods.  We will discuss two
recently proposed scaling and squaring methods.  The first one,
described by Higham~\cite{Higham2009}, is based on a diagonal Pad\'e
approximation of degree at most 13 and makes no assumption on the
spectrum of the input matrix.  The second one by G\"uttel and
Nakatsukasa~\cite{Guttel2016} employs a subdiagonal Pad\'e
approximation of much smaller degree, and is particularly useful if
the imaginary parts of the rightmost eigenvalues do not vary too much.
Both scaling and
squaring methods have been shown to behave in a forward stable manner
for normal matrices.

\subsection{A diagonal scaling and squaring method}
\label{sec:diag_expm}

The scaling and squaring method designed by Higham~\cite{Higham2009}
was until recently the default method to compute the matrix
exponential in {\textsc Matlab}, available via the command {\ttfamily
expm}.  It scales the input matrix $A$ so
that $\norm{2^{-\rho}A}_1 \lessapprox 5.4$, and approximates $\exp(A)$ using
a diagonal Pad\'e approximation, i.e., $k=m$ in the notation from
above.  The approximation degree is at most $13$, or less for matrices
that need not be scaled.  This parameter choice is designed such that
the approximation error can be interpreted as a backward error $E$ (in
any consistent matrix norm)
\begin{equation} \label{eq:expm_bwb}
    r_{m,m}(2^{-\rho}A)^{2^\rho} = \exp(A + E), \;
    \norm{E} \le u \norm{A},
\end{equation}
where $u$ denotes the unit roundoff in double precision; see~\cite[Thm.~2.1]{Higham2009}.  Note that no assumption on the
spectral properties of $A$ have been made.  The matrix $E$ can be
shown to commute with $A$, and hence one obtains immediately
\begin{align} \label{eq:expm_fwb}
\begin{split}
    \unorm{r_{m,m}(2^{-\rho}A)^{2^\rho} - \exp(A)}
    &\le \unorm{\exp(A) ( \exp(E) - I )} \\
    &\le \unorm{\exp(A)} \unorm{E} \unorm{\exp(E)}
    \le \unorm{\exp(A)} u \norm{A} e^{u \norm{A}},
\end{split}
\end{align}
which bounds the forward error of the approximation.  At the same
time, Higham shows that the matrix $q_{m,m}(A)$ is well conditioned
under this parameter regime.

We now show for the case of real spectra that the numerical rank is
bounded throughout the squaring phase
(cf.~\eqref{eqn:intermed_ranks}).  By effecting a shift $T \leftarrow
T - \mu I$ we may assume that the spectrum is actually negative real,
which simplifies the notation in what follows.
\begin{lemma}
    Let $T$ be a diagonalizable Toeplitz matrix with spectrum in
    $(-\infty, 0]$.  Set $\tau = u \norm{T} \exp(u\norm{T})$, and 
    \begin{equation*}
        s = \left\lceil \frac{\log(\tilde{C}) - \log(\tau)}{\log(V)}
        \right\rceil,
    \end{equation*}
    with $\tilde{C}$ and $V$ as in Corollary~\ref{cor:approxhnd} (we
    may safely assume that $s \ge 0$).
    Then $r_{m,m}(2^{-\rho} A)^{2^\sigma}$ and all the intermediate
    matrices in the squaring phase have $2\tau$-displacement rank
    $2s+1$.
\end{lemma}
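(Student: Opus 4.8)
The plan is to bound the numerical displacement rank of each matrix $r_{m,m}(2^{-\rho}A)^{2^k}$ arising in the squaring phase, for $0 \le k \le \rho$, by combining the backward error characterization~\eqref{eq:expm_bwb} of Higham's method with Corollary~\ref{cor:approx_exp}. First I would observe that, since the backward error bound $\|E\| \le u\|T\|$ and the commuting property hold not only for the full matrix but for each intermediate scaling, we have
\begin{equation*}
  r_{m,m}(2^{-\rho}T)^{2^k} = \exp\bigl(2^{k-\rho}T + E_k\bigr), \qquad \|E_k\| \le u\,\|2^{k-\rho}T\| \le u\,\|T\|,
\end{equation*}
where $E_k$ commutes with $T$. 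Strictly speaking this identity is applied to the scaled matrix $2^{-\rho}T$ viewed itself as the input, squared $k$ times; since $2^{-\rho}\|T\|$ satisfies Higham's norm constraint a fortiori, the bound~\eqref{eq:expm_bwb} is in force. Using the forward error estimate~\eqref{eq:expm_fwb} (applied with $A$ replaced by $2^{k-\rho}T$) then gives
\begin{equation*}
  \bigl\| r_{m,m}(2^{-\rho}T)^{2^k} - \exp(2^{k-\rho}T) \bigr\|_2 \le \|\exp(2^{k-\rho}T)\|_2\, u\,\|T\|\, e^{u\|T\|}.
\end{equation*}

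Next I would control the right-hand side uniformly in $k$. Since the spectrum of $T$ lies in $(-\infty,0]$, the same is true for $2^{k-\rho}T$, so $\|\exp(2^{k-\rho}T)\|_2 \le \kappa(X)$ where $X$ diagonalizes $T$ (the eigenvalues of $\exp(2^{k-\rho}T)$ are bounded by $1$). Hence each intermediate matrix is within distance $\tau' := \kappa(X)\,u\,\|T\|\,e^{u\|T\|}$ of $\exp(2^{k-\rho}T)$. Now I apply Corollary~\ref{cor:approx_exp} to the Toeplitz matrix $2^{k-\rho}T$ (which is still diagonalizable with real nonpositive spectrum, and has the same eigenvector matrix $X$, hence the same constant $\tilde C$ in Corollary~\ref{cor:approxhnd}): with $B = r_{m,m}(2^{-\rho}T)^{2^k}$ and the chosen $s$ satisfying $\tilde C V^{-s} \le \tau$, we conclude that $B$ has $2\tau$-displacement rank $2s+1$. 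The definition of $\tau$ in the lemma statement, $\tau = u\|T\|\exp(u\|T\|)$, together with the $\tilde C = \kappa(X)e^\mu C$ from Corollary~\ref{cor:approxhnd} (here $\mu = 0$ after the shift), absorbs the extra $\kappa(X)$ factor into $\tilde C$, so the choice of $s$ in the lemma is exactly what makes $\tilde C V^{-s} \le \tau$ hold.

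The only genuinely delicate point — and the step I expect to be the main obstacle — is justifying that Higham's backward error identity~\eqref{eq:expm_bwb} really does apply to every intermediate power $r_{m,m}(2^{-\rho}T)^{2^k}$ and not merely to the final result $k=\rho$. This requires noting that $r_{m,m}(2^{-\rho}T)^{2^k}$ is itself the output of the scaling-and-squaring procedure applied to the matrix $2^{k-\rho}T$ with scaling exponent $\rho-k$ and the same Padé degree $m$, and that the norm constraint $\|2^{-\rho}T\|_1 \lessapprox 5.4$ guarantees the degree-$m$ Padé approximant is accurate enough in the backward sense for that smaller matrix too. Once this is in place, the rest is the bookkeeping sketched above: uniform bound on $\|\exp(2^{k-\rho}T)\|_2$, application of Corollary~\ref{cor:approx_exp}, and matching the constant. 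I would also remark that the shift $T \leftarrow T - \mu I$ mentioned before the lemma only changes $\exp(T)$ by the scalar factor $e^{-\mu}$ and does not affect displacement rank, so assuming the spectrum is in $(-\infty,0]$ is without loss of generality.
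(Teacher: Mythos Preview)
Your proposal is correct and follows essentially the same approach as the paper's proof: regard each intermediate $r_{m,m}(2^{-\rho}T)^{2^k}$ as the output of scaling-and-squaring applied to the input $2^{k-\rho}T$ with scaling power $\rho-k$, invoke the backward error bound~\eqref{eq:expm_bwb}, convert it to a forward error via~\eqref{eq:expm_fwb}, and then apply Corollary~\ref{cor:approx_exp}. You are in fact slightly more explicit than the paper in tracking the factor $\|\exp(2^{k-\rho}T)\|_2 \le \kappa(X)$, which the paper's displayed bound silently omits.
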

\begin{proof}
We abbreviate $r \defby r_{m,m}$. The squaring phase involves the quantities
\begin{equation*}
    r(2^{-\sigma} T),
    r(2^{-\sigma} T)^{2^1},
    r(2^{-\sigma} T)^{2^2},
    \dotsc,
    r(2^{-\sigma} T)^{2^\sigma},
\end{equation*}
and we proceed by showing that each of these powers is close to the matrix
exponential of the matrix $T_k := 2^{-k} T$. Regarding $T_k$ as the \emph{input matrix} to
{\ttfamily expmt}, the
scaling power selected by the algorithm design is $\sigma - k$, and
the approximant $r$ is the same for all $k$. So
we obtain from~\eqref{eq:expm_bwb} for $0 \le k \le \sigma$
\begin{equation*}
    r(2^{-\sigma+k} 2^{-k}T)^{2^k} = r(2^{-\sigma}T)^{2^k}
    = \exp(T_k + E_k), \quad
    \norm{E_k} \le 2^{-k} u \norm{T}.
\end{equation*}

A forward error bound for the approximation of the intermediate
exponentials, that is, for each squaring iteration $k$, follows
from~\eqref{eq:expm_fwb}, 
\begin{align*}
    \norm{r(2^{-\sigma} T)^{2^k} - \exp(T_k)}
    & = \norm{ \exp(T_k + E_k) - \exp(T_k) }\\
    & \le u 2^{-k} \norm{T} \exp( u 2^{-k} \norm{T} )
    \le \tau.
\end{align*}
Since all the matrices $T_k$ (trivially) have negative real
spectra, the result now follows from
Corollary~\ref{cor:approx_exp}.
\end{proof}
Note that the non-normality of $T$ is reflected in the
constant $\tilde{C}$, via the condition number of an eigenbasis for
$T$.  If $T$ arises from a discretization process,
the lemma shows that the $2\tau$-numerical displacement rank remains bounded
as long as this condition number is bounded.  For highly non-normal
matrices, the usefulness of our result is limited.  We remark, however,
that the behaviour of scaling and squaring, in particular its
stability, is not
well understood in this case.

We now leave the regime of negative real spectra, but continue to
assume that $T$ is not highly non-normal, that is, $T$ can be
diagonalized by a reasonably well-conditioned matrix.  The preceding
discussion in fact applies to \emph{any} Toeplitz matrix $T$ whose
spectrum is contained in a subset of the complex plane, where the
error of the rational best approximation to $e^z$ decays exponentially
in the approximation degree (the involved constants change, however).
Another well known example for this situation are sectorial matrices
(see Section~\ref{sec:approxexponential}).

In the absence of such a rational best approximation result, which are in
general quite difficult to obtain, the displacement rank of $\exp(T)$, as well
as the intermediates in the squaring phase, are not \emph{bounded a priori}
and independently of the scaling power $\rho$.  However the
displacemnt ranks may still remain small, see
Section~\ref{sec:random} for an example.

\begin{algorithm}[t]
\caption{{\ttfamily expmt} -- Diagonal scaling \& squaring
from~\cite{Higham2009} for Toeplitz matrices}
\label{alg:expmt}
\begin{algorithmic}[1]
\REQUIRE Toeplitz matrix $T\in \C^{n\times n}$, given by its first column
$c$ and row $r$.
\ENSURE Generator $(G,B)$ such that $\toepop(G,B) \approx \exp(T)$
\STATE{Compute $\norm{T}_1$} \hfill
    \COMMENT{$\OOM(n)$, Sec.~\ref{sec:toep_norm}}
\STATE{Chose scaling parameter $\rho$ and Pad\'e approximant
    $r_m(z) = \tfrac{p_m(z)}{q_m(z)}$} \hfill \COMMENT{\cite{Higham2009}}
\STATE{Scale $c \leftarrow 2^{-\rho} c$, $r \leftarrow 2^{-\rho} r$} \hfill
    \COMMENT{$\OOM(n)$}
\STATE{$(G_p, B_p) \leftarrow$ generator for $p_m(T)$} \hfill
    \COMMENT{$\OOM(mn \log n)$, Sec.~\ref{sec:polyval}}
\STATE{$(G_q, B_q) \leftarrow$ generator for $q_m(T)$} \hfill
    \COMMENT{$\OOM(mn \log n)$, Sec.~\ref{sec:polyval}}
\STATE{$(G,B) \leftarrow$ generator for $r_m(T) = q_m(T)^{-1} p_m(T)$} \hfill
    \COMMENT{$\OOM(m^2n^2)$, Sec.~\ref{sec:rat_eval}}
\FOR{$k=1$ \TO $\rho$} \label{line:square_beg}
\STATE  $(\tilde{G}, \tilde{B}) \leftarrow$ generator for $\toepop(G,B)^2$  \hfill
    \COMMENT{$\OOM(m^2 n \log n)$, Sec.~\ref{sec:squaring}}
\STATE $(G,B) \leftarrow$ compress $(\tilde{G}, \tilde{B})$ \hfill
    \COMMENT{$\OOM(m^2 n)$, Alg.~\ref{alg:gencompress}}
\ENDFOR \label{line:square_end}
\STATE \COMMENT{optionally} reconstruct $\toepop(G,B)$ \hfill
    \COMMENT{$\OOM(mn^2)$, Sec.~\ref{sec:reconstruction}}
\end{algorithmic}
\end{algorithm}

Assuming that the numerical displacement ranks during the squaring
phase are bounded, a practical algorithm of quadratic complexity is
obtained by replacing the unstructured matrix computations used
in~\cite{Higham2009} by their structured counterparts explained in
Sec.~\ref{sec:algtools}.  The resulting method is shown in
Algorithm~\ref{alg:expmt}.

We close the discussion by noting that Algorithm~\ref{alg:expmt}
\emph{almost} achieves our goal of designing a method of quadratic
complexity for the Toeplitz matrix exponential.  While the
approximation degree $m$ is bounded by $13$, the scaling power $\rho$
still grows logarithmically with $\norm{A}_1$, and consequently the
number of squaring iterations is not bounded independently of the
numerical values of $A$.  From a practical point of view
Algorithm~\ref{alg:expmt} still behaves like an algorithm of quadratic
complexity.

\subsection{A subdiagonal scaling and squaring method}
\label{sec:sexpmt}

The second scaling and squaring method we adapted for the Toeplitz case is
described and analyzed in~\cite{Guttel2016}.  In contrast to Higham's design,
it (typically) employs a \emph{subdiagonal} Pad\'e approximation (hence
``{\ttfamily sexpm}''), which is appropriate if the spectrum is located on the
negative real line, or close to it.  If the input matrix $T$ does not have this
property, say, the eigenvalues are only real, this subdiagonal approximation
may be applied to the shifted matrix
\begin{equation*}
    T - \lambda_{\text{max}} I,
\end{equation*}
where $\lambda_{\text{max}}$ denotes the largest eigenvalue of $T$.
(The approximation to $\exp(T)$ is then recovered by multiplication
with $e^{\lambda_{\text{max}}}$).

Put more generally, it suffices that the rightmost eigenvalues of $T$ do not have
widely varying imaginary parts for this subdiagonal approximation to be very
accurate. We refer to~\cite{Guttel2016} for a complete discussion of this
shifting technique, and the quality of the obtained approximation.

Compared to
{\ttfamily expm}, {\ttfamily sexpm} has several attractive features:
\begin{enumerate}

    \item For the Pad\'e degree $(k,m)$ we have $k,m \le 5$, resulting in
        computational cost savings over the diagonal approximation from the
        previous section.

    \item The Pad\'e approximant can be stably evaluated as a partial fraction
        expansion.  Hence the rational approximation $q_{k,m}(T)^{-1} p_{k,m}$
        involves only solves with Toeplitz matrices instead of Toeplitz-like
        matrices.

    \item The number of scaling iterations $\rho$ is bounded by four,
        \emph{independently} of $A$, implying that the generator length can
        increase at most by a factor of $2^4 = 16$ during the squaring phase.
        (In light of the results from Section~\ref{sec:approxexponential},
        this factor likely still is a gross overestimate, however).  Hence, the
        displacement rank of the approximation to the exponential is bounded
        independently of $A$ as well.  In other words, the potential
        displacement rank growth as discussed in Section~\ref{sec:squaring} is
        not an issue for this method.

\end{enumerate}

If $A \in \C^{n,n}$ is normal, the approximation $B$ of $\exp(A)$
obtained through {\ttfamily sexpm} satisfies
\begin{equation*}
    \norm{B - \exp(A)}_2 \le u \norm{A}_2,
\end{equation*}
and the authors in fact show that their method is a forward stable
method~\cite[Thm.~4.1]{Guttel2016}.  The adaption to the Toeplitz
case, coined {\ttfamily sexpmt}, is shown in Algorithm~\ref{alg:sexpmt}.  Since the
number of squaring iterations is bounded independently of $n$ and $\norm{A}_2$,
it follows that the complexity of Algorithm~\ref{alg:sexpmt}
is $\OOM(n^2)$.

\begin{algorithm}[t]
\caption{{\ttfamily sexpmt} -- Subdiagonal scaling \& squaring
from~\cite{Guttel2016} for Toeplitz matrices}
\label{alg:sexpmt}
\begin{algorithmic}[1]
\REQUIRE Toeplitz matrix $T\in \C^{n\times n}$ with the spectral properties described
    in Section~\ref{sec:sexpmt}, given by its first column
$c$ and row $r$.
\ENSURE Generator $(G,B)$ such that $\toepop(G,B) \approx \exp(T)$
\STATE{Estimate $\norm{T}_2$} \hfill
    \COMMENT{$\OOM(n \log n)$, Sec.~\ref{sec:toep_norm}}
\STATE{Chose scaling $2^\rho$ and Pad\'e approximant
    $r_{k,m}(z) = \sum_{i=1}^m \tfrac{\beta_i}{z - \alpha_j} + p(z)$} \hfill
    \COMMENT{\cite{Guttel2016}}
\STATE{Scale $c \leftarrow 2^{-\rho} c$, $r \leftarrow 2^{-\rho} r$} \hfill
    \COMMENT{$\OOM(n)$}
\STATE Initialize $G = [], B = []$
\FOR{$i=1$ \TO $m$}
\STATE Compute generator $(G_i,B_i)$ for $\beta_i (T - \alpha_iI )^{-1}$ \hfill
    \COMMENT{$\OOM(n^2)$, Sec.~\ref{sec:pf_expansion}}
\STATE $G \leftarrow [G, G_i]$, $B \leftarrow [B, B_i]$
\ENDFOR
\STATE Compute generator for $p(T)$, append to $(G,B)$ \hfill
    \COMMENT{$\OOM(\deg(p) n \log n)$, Sec.~\ref{sec:polyval}}
\FOR{$k=1$ \TO $\rho$}
\STATE  $(\tilde{G}, \tilde{B}) \leftarrow$ generator for $\toepop(G,B)^2$  \hfill
    \COMMENT{$\OOM(m^2 n \log n)$, Sec.~\ref{sec:squaring}}
\STATE $(G,B) \leftarrow$ compress $(\tilde{G}, \tilde{B})$ \hfill
    \COMMENT{$\OOM(m^2 n)$, Alg.~\ref{alg:gencompress}}
\ENDFOR
\STATE \COMMENT{optionally} reconstruct $\toepop(G,B)$ \hfill
    \COMMENT{$\OOM(mn^2)$, Sec.~\ref{sec:reconstruction}}
\end{algorithmic}
\end{algorithm}

\section{Numerical experiments}

We have implemented Algorithms~\ref{alg:expmt} and~\ref{alg:sexpmt} in
\textsc{Matlab}. For solving the Toeplitz-like systems as described in
Section~\ref{sec:tl_solve}, we are using the
drsolve\footnote{\url{http://bugs.unica.it/~gppe/soft/\#drsolve}}
package~\cite{Arico2010}.  All experiments were conducted on a standard
Linux box using a single computational thread.  The {\ttfamily expm}
function of \textsc{Matlab} that we used for various comparisons is
described in~\cite{Higham2008}.

\subsection{Exact error on small matrices} \label{sec:tinycollection}

\begin{figure}[t] \begin{center}
\includegraphics[width=0.49\textwidth]{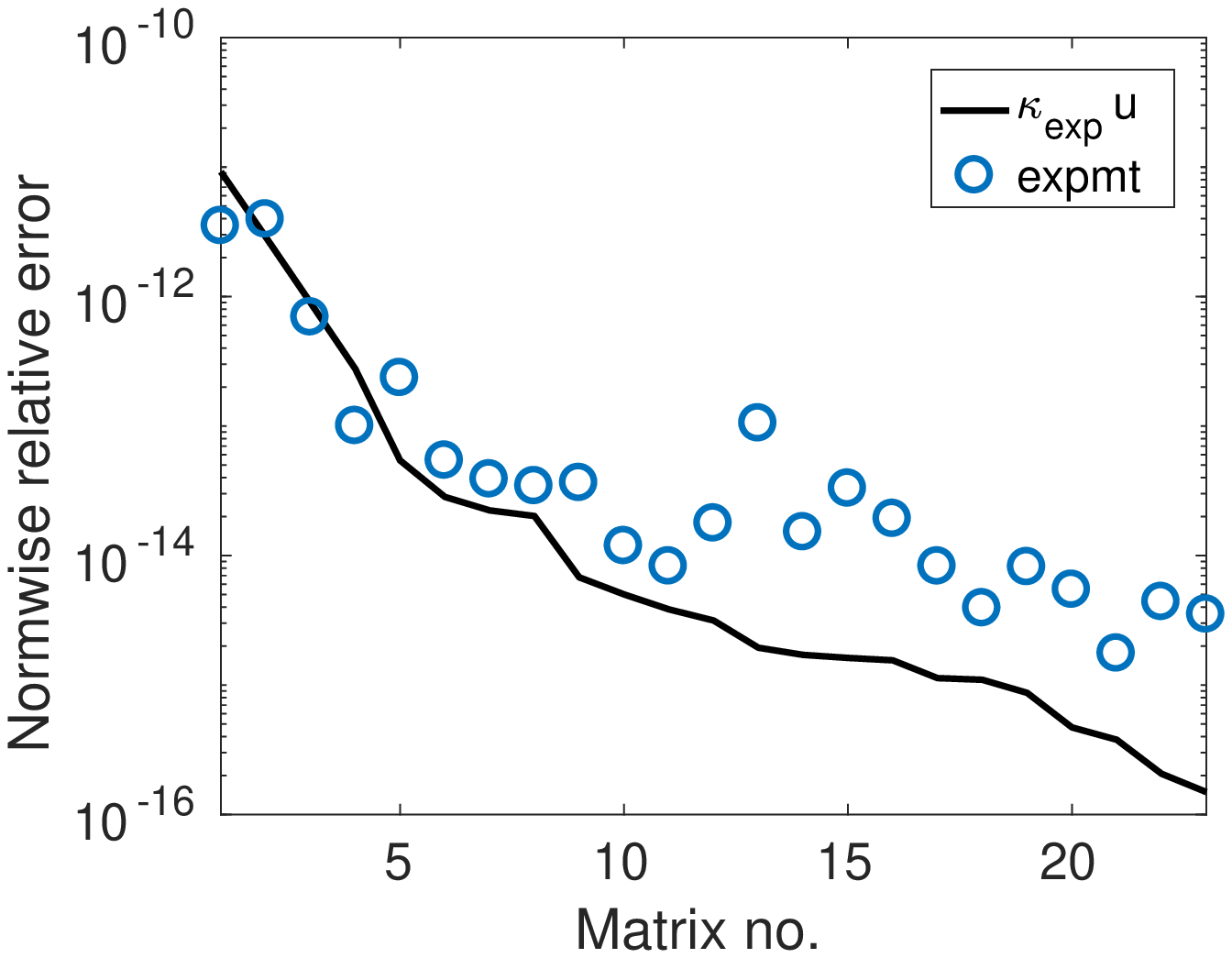}
\hfill
\includegraphics[width=0.49\textwidth]{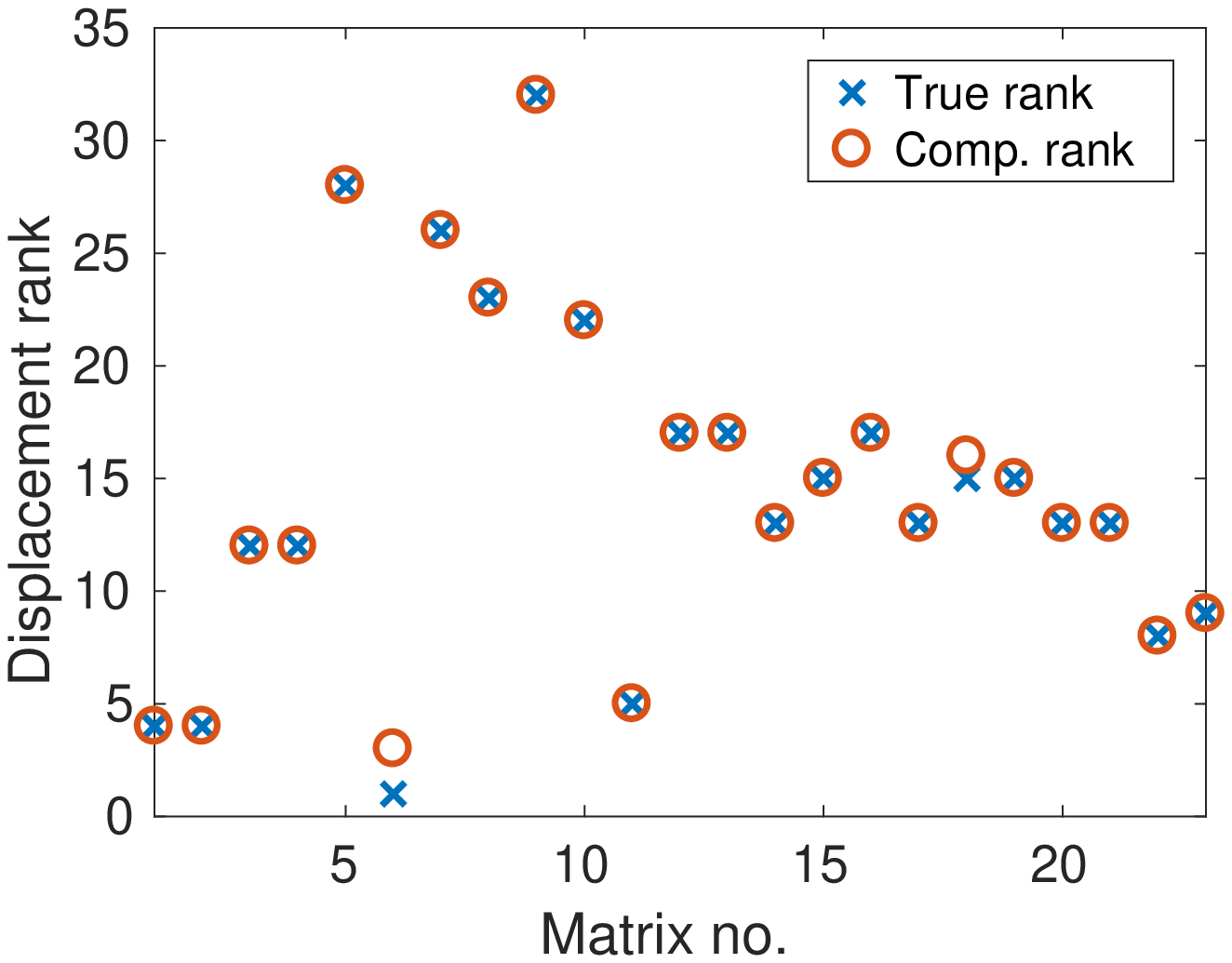}
\end{center} \caption{\emph{Left}: Normwise relative error for the
approximation of $\exp(T)$ via Algorithm~\ref{alg:expmt} for the
matrices listed in Section~\ref{sec:tinycollection}.  The solid black
line indicates the condition numbers (times machine precision).
\emph{Right}: Displacement rank of $\exp(T)$ and the displacement rank
of the approximation.  All matrices have size
$32\times32$.\label{fig:tinycollection}} \end{figure}

As a first test we compute the exact normwise error of the
approximation of $\exp(T)$ via Algorithm~\ref{alg:expmt} on a diverse
set of \emph{small} Toeplitz matrices, from the following sources:
\begin{itemize}

\item The $16$ Toeplitz matrices available via the {\ttfamily
smtgallery} command of the structured matrix
toolbox\footnote{\url{http://bugs.unica.it/~gppe/soft/smt/}}~\cite{Redivo2012}.

\item Seven matrices from~\cite[sec.~5]{Lee2010}.  Specifically, we
generated matrices according to Examples 1 and 2
from~\cite{Lee2010} for time steps $1$, $10$ and $100$ as well
as one instance of Example 3 from~\cite{Lee2010} with time step
$1$. The last example refers to the Merton model, which is
considered further in Section~\ref{sec:merton}.

\end{itemize}

Figure~\ref{fig:tinycollection} shows the normwise relative errors
\begin{equation*} \frac{\norm{\exp(T) -
\expmt(T)}_F}{\norm{\exp(T)}_F}, \end{equation*} where $\expmt(T)$
denotes the computed approximation to $\exp(T)$ obtained from
Algorithm~\ref{alg:expmt}.  The ``exact'' $\exp(T)$ was computed using
\textsc{Matlab}'s variable precision arithmetic with $150$ digits.  Further, we
show for each matrix in the set an approximation to the relative
condition number of the exponential condition
number~\cite[chap.~10]{Higham2008} (black line).  The errors of a
backward stable method would realize errors close to this line, and we
see that the errors of $\expmt$ are roughly bounded by ten times this
quantity.

\subsection{Efficiency of {\ttfamily expmt} outside the sectorial regime}
\label{sec:random}

Let $T$ be a Toeplitz matrix.  In Section~\ref{sec:diag_expm} we
concluded that that the displacement rank of $\exp(T)$, and ranks of the
intermediate approximations~\eqref{eqn:intermed_ranks}, are bounded a
priori only under certain conditions on the spectrum of $T$, e.g., real or
sectorial.  Further, in Example~\ref{ex:oscillation} we showed that
one cannot expect an accurate, low displacement rank approximation of
$\exp(T)$ if all the eigenvalues of $T$ are located on, or close to, the
imaginary axis.

\begin{figure}
    \includegraphics[width=0.49\textwidth]{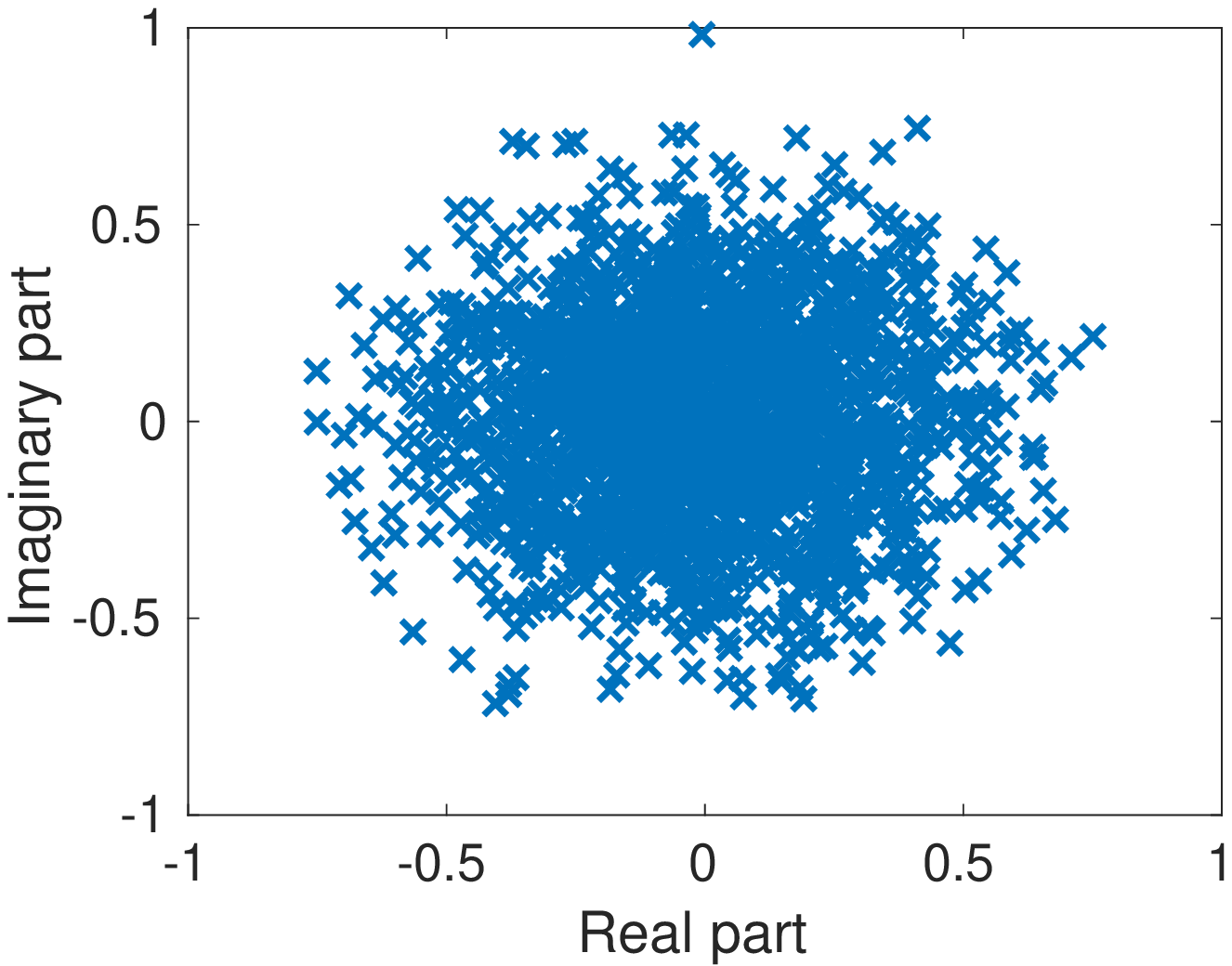}
    \includegraphics[width=0.49\textwidth]{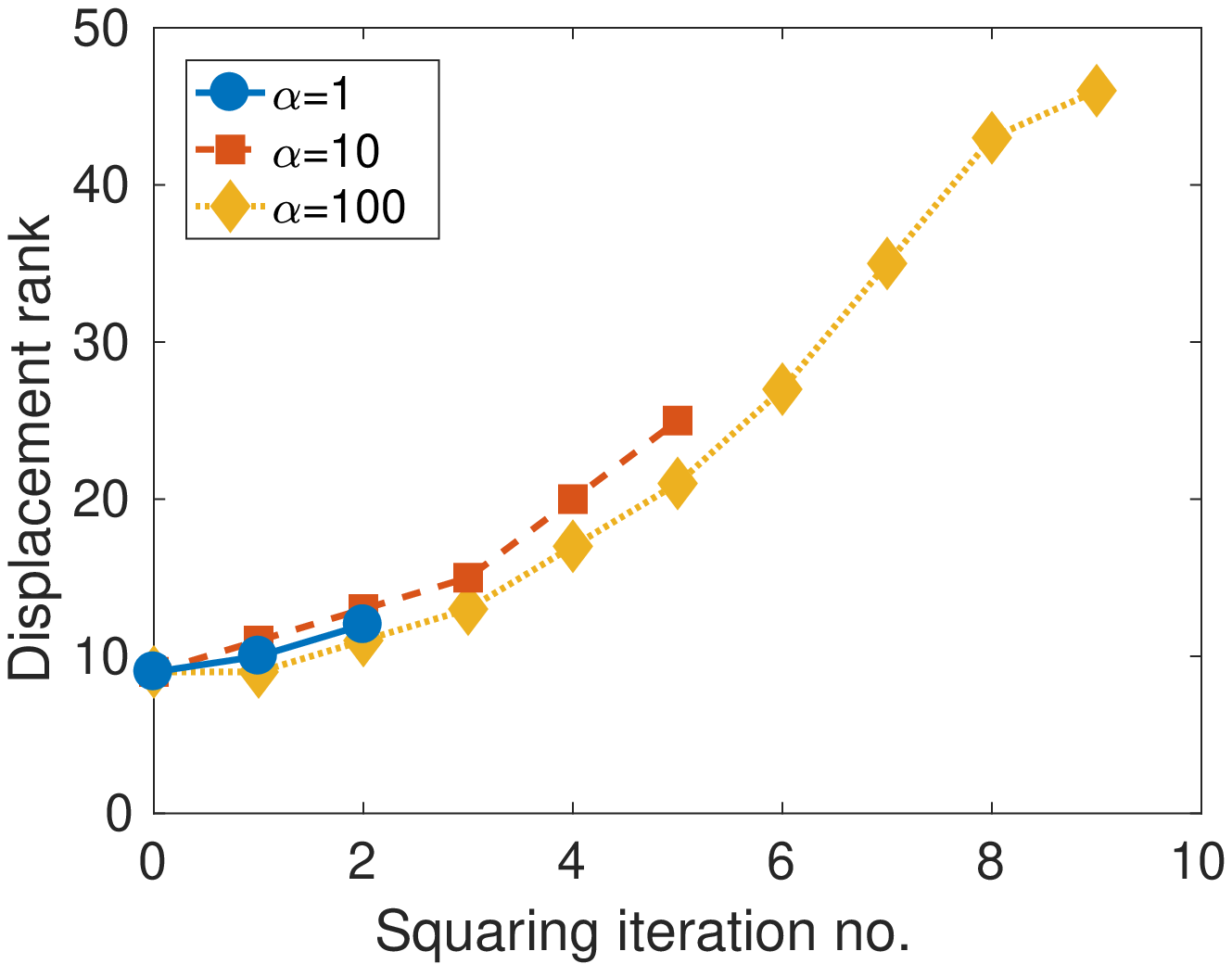}
    \caption{Algorithm~\ref{alg:expmt} applied to $\alpha T$, where
    $T$ is a random matrix and $\alpha \in \{1,10,100\}$.
    \emph{Left}: Spectrum of $T$ in the complex plane. \emph{Right}:
    Displacement rank of the intermediate approximations during the
    squaring phase.  See Section~\ref{sec:random} for a
    discussion.\label{fig:random}}
\end{figure}

This does not imply that Algorithm~\ref{alg:expmt} is necessarily
inefficient if applied to a matrix that is not sectorial.  We
illustrate this setting in Figure~\ref{fig:random}.  There we consider
a Toeplitz matrix $T$ as in~\eqref{eq:toeplitz} where the real and
imaginary parts of each $t_{k}$, $-n+1 \le k \le n-1$ ($n=2000$), have
been drawn from $\mathcal{N}(0,1)$, followed by a scaling so that
$\norm{T}_2 = 1$.  The spectrum of $T$ is shown on the left, and it is
evident that $T$ is not sectorial.

We applied Algorithm~\ref{alg:expmt} to $\alpha T$ for $\alpha \in
\{1, 10, 100\}$, in order to provoke an increasing number of squaring
iterations.  The displacement ranks of the intermediate approximations
over the course of the squaring iterations,
lines~\ref{line:square_beg}--\ref{line:square_end}, are shown on the
right (iteration $0$ corresponds to the initial rational
approximation).  We observe that the displacement rank grows very
slowly in all of the three cases under consideration.  In particular,
the displacement rank does \emph{not double} at each squaring
iteration, which corresponds to the worst case bound discussed in
Section~\ref{sec:squaring}.  We remark that for $\alpha=100$ the run time of
{\ttfamily expmt} was no more than $5$s, while {\ttfamily expm} took more
than $40$s.

Finally we revisit Example~\ref{ex:oscillation}. While
Algorithm~\ref{alg:expmt} will exhibit $\OOM(n^3)$ run time scaling
when applied to matrices of this type, it is still faster than the
corresponding standard {\ttfamily expm} algorithm, because the only
costly operations are the very last squaring steps.  In contrast,
\emph{all} the squaring iterations, as well as the computations
involved in evaluating the rational function, are operations of cubic
complexity in
{\ttfamily expm}.

A comparison of the run times for computing the exponentials in
Example~\ref{ex:oscillation} is given in the following table.

\begin{center}
\begin{tabular}{l|rrr}
    $\alpha$ & {\ttfamily expm} & {\ttfamily expmt} & {\ttfamily expmt}-GEMM\\
    \hline
    1        &  4.55s &  1.96s & 2.01s\\
    10       &  8.04s &  2.92s & 2.98s\\
    100      & 13.15s &  3.28s & 3.41s\\
    1000     & 19.47s & 12.30s & 5.77s\\
\end{tabular}
\end{center}

Comparing the timings of {\ttfamily expm} and {\ttfamily exmpt}, we
find that Algorithm~\ref{alg:expmt} is always faster than
\textsc{Matlab}'s built-in matrix exponential function.  The last
column (``{\ttfamily expmt}-GEMM'') shows the run times for the
following variation of {\ttfamily expmt}:  As soon as the displacement
rank exceeds a certain threshold during the squaring phase (here we
chose $n/6$), we use standard matrix-matrix multiplication instead of
the generator based multiplication (see Section~\ref{sec:squaring}).
With this trivial twist enabled, the run time gains over {\ttfamily
expm} remain quite pronounced even if the exponential is far from
having low numerical displacement rank.

\subsection{Option pricing using the Merton model} \label{sec:merton}

We now turn to the evaluation of
option prices in the Merton model, for one single underlying
asset~\cite{Merton1976}.  There, in contrast to the Black-Scholes
model, the expected return of the asset evolves as a mixture of
continuous and jump processes.  The option value $\omega(\xi, t)$ on
$(-\infty, \infty) \times [0, T]$ satisfies the partial integro-differential equation (PIDE) \begin{equation}
\label{eq:merton} \omega_t = \frac{\nu^2}{2} \omega_{\xi\xi} + \left(
r - \lambda \kappa - \frac{\nu^2}{2}\right) \omega_\xi
    - (r + \lambda) \omega + \lambda \int_{-\infty}^\infty \omega(\xi
      + \eta, t) \phi(\eta) d\eta, \end{equation}
where $T$ denotes time to maturity, $\nu \ge 0$ is the volatility, $r$ is the risk-free interest
rate, $\lambda \ge 0$ is the arrival intensity of a Poisson process, $\phi$ is 
the normal distribution with mean $\mu$ and standard deviation $\sigma$, and 
$\kappa = e^{\mu+\sigma^2/2} - 1$. 

The discretization of~\eqref{eq:merton} first truncates the infinite domain $(-\infty, \infty) \times [0, T]$
to $(-\xi_{\min}, \xi_{\max}) \times [0, T]$. Then central differences
and the rectangle method are used to discretize the differential and
integral terms in~\eqref{eq:merton}, respectively. Because the
coefficients do not depend on $\xi$ and the integral kernel is shift
invariant, the discretization yields a nonsymmetric, sectorial Toeplitz matrix
$T$. We refrain from giving details and refer to the excellent
summary given in~\cite[Example~3]{Lee2010}. 

In all experiments, we used parameters identical to the ones used in~\cite{Lee2010}: $\xi_{\min} = -2$, $\xi_{\max} = 2$, $K = 100$, $\nu = 0.25$, $r = 0.05$,
$\lambda = 0.1$, $\mu = −0.9$, $\sigma = 0.45$, as well as a full time step $1$. 

\begin{figure}[t]
\includegraphics[width=0.49\textwidth]{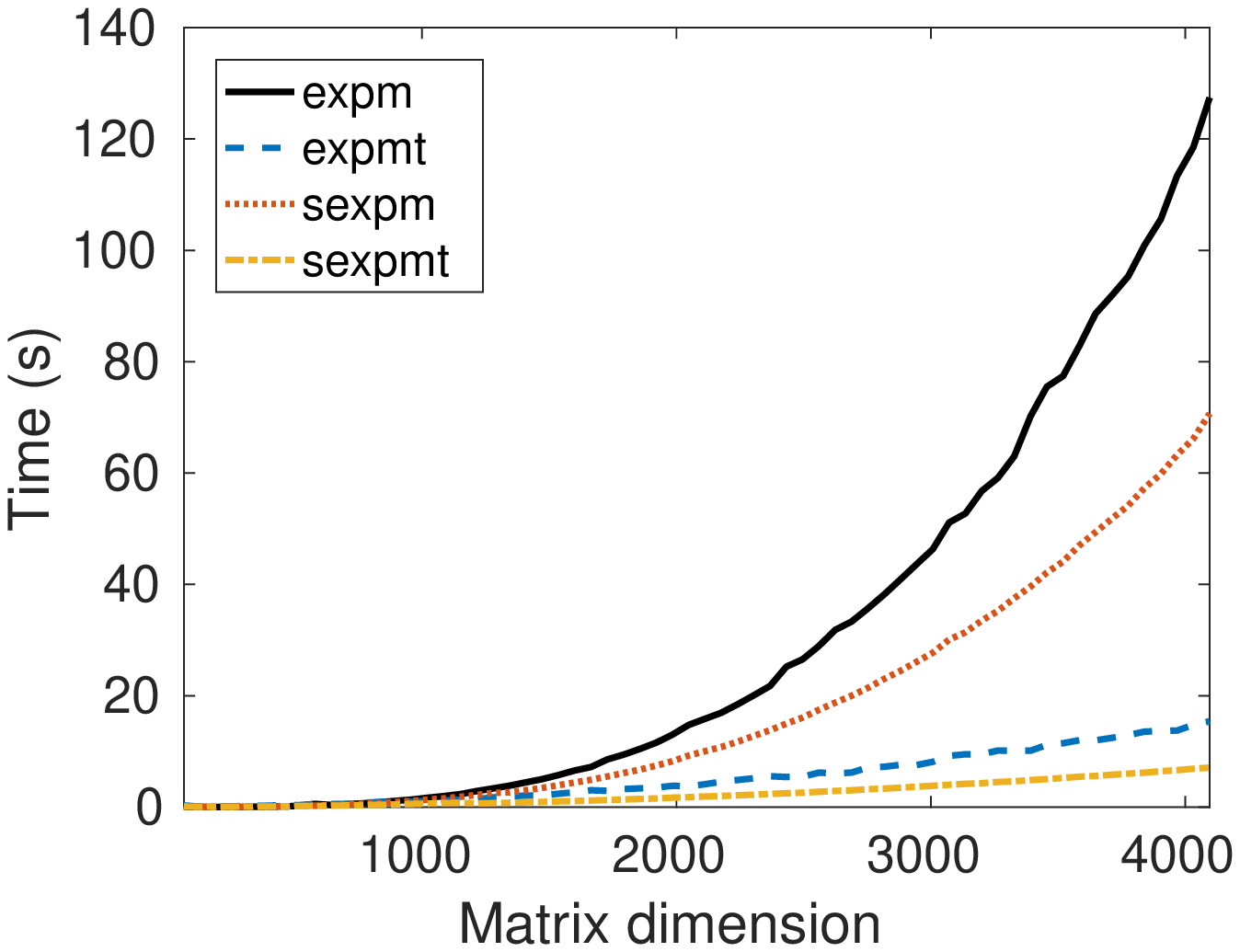} \hfill
\includegraphics[width=0.49\textwidth]{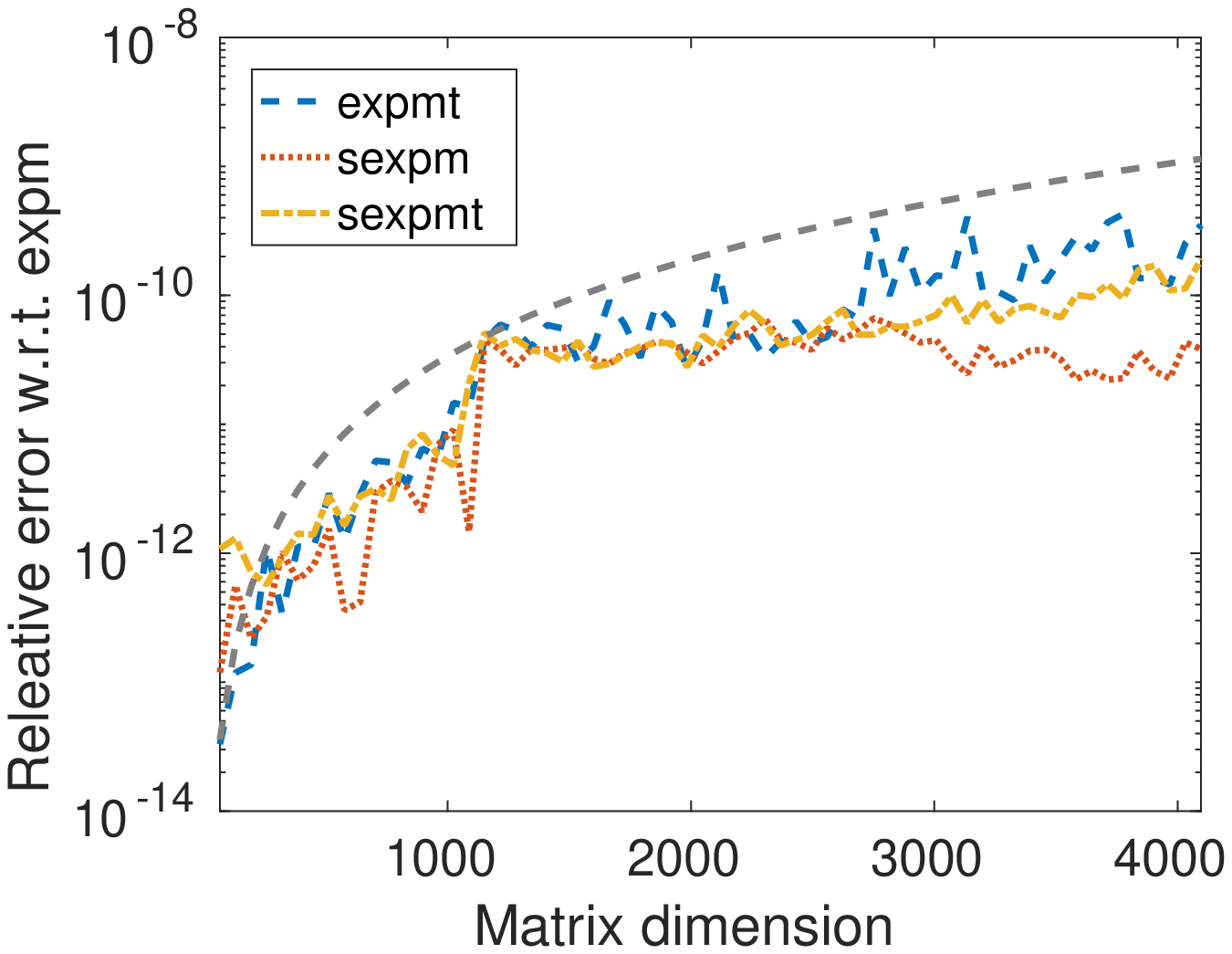}
\caption{Experimental comparison of approximations to $\exp(A_n)$ for
the Merton model (see Section~\ref{sec:merton}).  \emph{Left:} Run
    time comparison
for the computation of the full exponential approximation.
\emph{Right:} Relative error with respect to {\ttfamily expm}.  The
dashed gray line shows $u \norm{A_n}_F$, a lower bound for the
exponential condition number (times machine precision).
\label{fig:sexpmt_merton}}
\end{figure}

Figure~\ref{fig:sexpmt_merton} (left) shows the run time for the four
matrix exponential approximations {\ttfamily expm}, {\ttfamily expmt},
{\ttfamily sexpm}, and {\ttfamily sexpmt}.  From the complexity
analysis in Section~\ref{sec:algtools} we know that the run times of
{\ttfamily expmt} and {\ttfamily sexpmt} scale quadratically in $n$
({\ttfamily expm} and {\ttfamily sexpm} scale cubically), and the plot
shows this qualitative different behaviour.  The run time scaling
exponents inferred from the measured times $t_1, t_2$ at dimensions
$n_1 = 2048$ and $n_2=4096$ through $\log(t_2/t_1) / \log(n_2 / n_1)$,
is about $2.20$ for {\ttfamily expmt} and $2.00$ for {\ttfamily
sexpmt}.  From the shown data we also see that a saving in run time by
using our fast algorithm over the standard ones is realized already
for matrix sizes between $n=1000$ and $n=1500$, indicating that the
constants hidden in the big-$\OOM$ complexity bounds for our
algorithms are not too large.

We now discuss the accuracy of the obtained approximations.  Since the
computation of the matrix exponential using variable precision
arithmetic is too expensive for the matrix sizes we are considering
here, we assess the accuracy of {\ttfamily expmt}, {\ttfamily sexpm},
and {\ttfamily sexpmt} with reference to {\ttfamily expm}.
Figure~\ref{fig:sexpmt_merton} (right), shows the relative distance
\begin{equation*} \frac{\norm{B - \expm(T)}_F}{\norm{\expm(T)}_F},
\end{equation*} where $B$ is the approximation we want to compare.  In
addition, we show $u\norm{T}_F$ (dashed line), which is a lower bound
for the exponential condition number (times machine precision).  Since
the relative error w.r.t.~{\ttfamily expm} is roughly bounded by this
quantity, we conclude that our adapted scaling and squaring methods
behave in a forward stable manner in this example.

\begin{figure} \begin{center}
\includegraphics[width=0.49\textwidth]{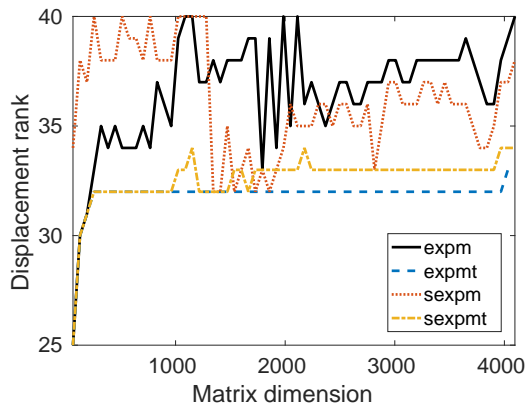}
\end{center} \caption{Displacement ranks of the exponentials computed
for the Merton model.\label{fig:merton_ranks}} \end{figure}

\begin{figure} \begin{center}
\includegraphics[width=0.49\textwidth]{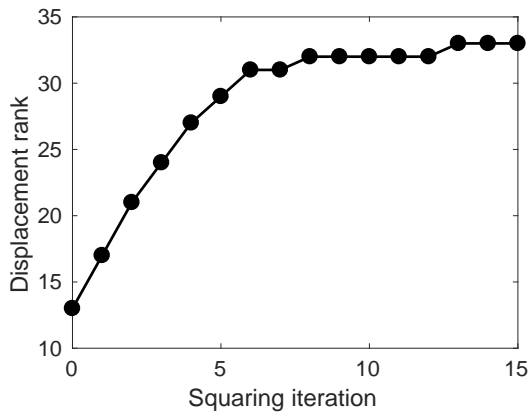}
\end{center} \caption{Evolution of the displacement ranks in the
squaring phase of Alg.~\ref{alg:expmt} for the Merton model
discretized with $n=4096$ points.  \label{fig:squaring_ranks}}
\end{figure}

In Figure~\ref{fig:merton_ranks} we show the displacement ranks of the
approximations to the matrix exponentials.  For {\ttfamily expmt} and
{\ttfamily sexpmt}, this rank corresponds to the length of the
generator obtained by the corresponding method after the squaring
phase.  For {\ttfamily expm} and {\ttfamily sexpm}, the shown rank is
the numerical rank of the displacements, as determined by
\textsc{Matlab}'s {\ttfamily rank} function.  As suggested by the
discussion in Section~\ref{sec:rationalapproximation}, all these ranks
are close to each other, and in particular quite small.  Finally,
Figure~\ref{fig:squaring_ranks} shows how the displacement rank
evolves during the squaring phase of {\ttfamily expmt} ($n =
4096$).

\section{Conclusions and future work}

We have shown that the full matrix exponential of a Toeplitz matrix can be
computed efficiently using scaling and squaring algorithms.  A key result that
enables this efficient computation is the low displacement rank of rational
functions for Toeplitz matrices.  Combined with classical results for rational
best approximations of the exponential function, it asserts that the Toeplitz
matrix exponential itself enjoys provable low displacement rank if its spectrum
is real or sectorial, for example.  By carefully adapting all the matrix
computations in the general scaling and squaring framework, we obtain
algorithms of quadratic complexity of the input size for computing $\exp(T)$
for a Toeplitz matrix $T$.  Since the output size of the matrix exponential is
quadratic as well, our algorithms hence achieve \emph{optimal} complexity.

We also demonstrated by means of an example that a spectrum clustered along a
long stretch of the imaginary axis does not allow for a low displacement rank
approximation of the matrix exponential.  The apparent reason is the
periodicity of the exponential function on this set, which does not allow for a
low degree rational approximation.  It would be of interest to investigate this
setting more rigorously.

In this work we have focused on analyzing the displacement rank of
polynomials, rational functions and the matrix exponential itself.
Two important aspects have received less attention than they probably
deserve.  One is the design of the scaling and squaring logic itself, for
which we relied on the works of Higham~\cite{Higham2009}, as well as
G{\"u}ttel and Nakatsukasa~\cite{Guttel2016}.  An important
design goal for these methods is a small number of (unstructured)
matrix operations of cubic complexity such as inversion or
matrix-matrix multiplication.  However, as our careful description in
Section~\ref{sec:algtools} shows, minimizing these operations is of
much less importance in the Toeplitz case, as long as the overall
quadratic complexity is maintained.  It would thus be of interest to
design a scaling and squaring method specifically for the case of
Toeplitz matrices.  We also did not attempt to analyze the forward
stability in floating point arithmetic for our adapted methods in
detail, although such an analysis is certainly of interest.

Finally, our results show that for Toeplitz matrices it is even
possible to implement scaling and squaring algorithms of
\emph{subquadratic complexity}, provided that only the generators of
$\exp(T)$ are requested, and not the full matrix exponential.  Recall
that the generators are already sufficient for applying the
exponential to a vector.  If, for example, the Toepliz inversions in
Alg.~\ref{alg:sexpmt} are carried out by superfast solvers
(e.g.,~\cite{Kailath1999,Olshevsky2001a,Xia2012}), then these
generators can be computed in $\OOM(n \log n)$.

\bibliographystyle{siamplain}
\bibliography{expmt}

\begin{thebibliography}{10}

\bibitem{Arico2010}
{\sc A.~Aric{\`o} and G.~Rodriguez}, {\em A fast solver for linear systems with
  displacement structure}, Numer. Algorithms, 55 (2010), pp.~529--556,
  \url{http://dx.doi.org/10.1007/s11075-010-9421-x}.

\bibitem{Bai2000}
{\sc Z.~Bai, J.~Demmel, J.~Dongarra, A.~Ruhe, and H.~van~der Vorst}, eds., {\em
  Templates for the solution of algebraic eigenvalue problems}, vol.~11 of
  Software, Environments, and Tools, Society for Industrial and Applied
  Mathematics (SIAM), Philadelphia, PA, 2000,
  \url{http://dx.doi.org/10.1137/1.9780898719581}.

\bibitem{Bini2015}
{\sc D.~A. Bini, S.~Dendievel, G.~Latouche, and B.~Meini}, {\em Computing the
  exponential of large block-triangular block-{T}oeplitz matrices encountered
  in fluid queues}, Linear Algebra Appl., 502 (2016), pp.~387--419,
  \url{http://dx.doi.org/10.1016/j.laa.2015.03.035}.

\bibitem{Chandrasekaran1998}
{\sc S.~Chandrasekaran and A.~H. Sayed}, {\em A fast stable solver for
  nonsymmetric {T}oeplitz and quasi-{T}oeplitz systems of linear equations},
  SIAM J. Matrix Anal. Appl., 19 (1998), pp.~107--139,
  \url{http://dx.doi.org/10.1137/S0895479895296458}.

\bibitem{Duffy2006}
{\sc D.~J. Duffy}, {\em Finite difference methods in financial engineering},
  Wiley Finance Series, John Wiley \& Sons, Ltd., Chichester, 2006,
  \url{http://dx.doi.org/10.1002/9781118673447}.
\newblock A partial differential equation approach, With 1 CD-ROM (Windows,
  Macintosh and UNIX).

\bibitem{Gohberg1995}
{\sc I.~Gohberg, T.~Kailath, and V.~Olshevsky}, {\em Fast {G}aussian
  elimination with partial pivoting for matrices with displacement structure},
  Math. Comp., 64 (1995), pp.~1557--1576,
  \url{http://dx.doi.org/10.2307/2153371}.

\bibitem{Gonchar1987}
{\sc A.~A. Gonchar and E.~A. Rakhmanov}, {\em Equilibrium distributions and the
  rate of rational approximation of analytic functions}, Mat. Sb. (N.S.),
  134(176) (1987), pp.~306--352, 447.

\bibitem{Gu1998}
{\sc M.~Gu}, {\em Stable and efficient algorithms for structured systems of
  linear equations}, SIAM J. Matrix Anal. Appl., 19 (1998), pp.~279--306
  (electronic), \url{http://dx.doi.org/10.1137/S0895479895291273}.

\bibitem{Guttel2013a}
{\sc S.~G{\"u}ttel}, {\em Rational {K}rylov approximation of matrix functions:
  numerical methods and optimal pole selection}, GAMM-Mitt., 36 (2013),
  pp.~8--31, \url{http://dx.doi.org/10.1002/gamm.201310002}.

\bibitem{Guttel2016}
{\sc S.~G{\"u}ttel and Y.~Nakatsukasa}, {\em Scaled and squared subdiagonal
  {P}ad\'e approximation for the matrix exponential}, SIAM J. Matrix Anal.
  Appl., 37 (2016), pp.~145--170, \url{http://dx.doi.org/10.1137/15M1027553}.

\bibitem{Heinig1995}
{\sc G.~Heinig}, {\em Inversion of generalized {C}auchy matrices and other
  classes of structured matrices}, in Linear algebra for signal processing
  ({M}inneapolis, {MN}, 1992), vol.~69 of IMA Vol. Math. Appl., Springer, New
  York, 1995, pp.~63--81, \url{http://dx.doi.org/10.1007/978-1-4612-4228-4_5}.

\bibitem{Heinig1984}
{\sc G.~Heinig and K.~Rost}, {\em Algebraic methods for {T}oeplitz-like
  matrices and operators}, vol.~13 of Operator Theory: Advances and
  Applications, Birkh\"auser Verlag, Basel, 1984,
  \url{http://dx.doi.org/10.1007/978-3-0348-6241-7}.

\bibitem{Higham2008}
{\sc N.~J. Higham}, {\em Functions of matrices}, Society for Industrial and
  Applied Mathematics (SIAM), Philadelphia, PA, 2008,
  \url{http://dx.doi.org/10.1137/1.9780898717778}.
\newblock Theory and computation.

\bibitem{Higham2009}
{\sc N.~J. Higham}, {\em The scaling and squaring method for the matrix
  exponential revisited}, SIAM Rev., 51 (2009), pp.~747--764,
  \url{http://dx.doi.org/10.1137/090768539}.

\bibitem{Hochbruck1997}
{\sc M.~Hochbruck and C.~Lubich}, {\em On {K}rylov subspace approximations to
  the matrix exponential operator}, SIAM J. Numer. Anal., 34 (1997),
  pp.~1911--1925, \url{http://dx.doi.org/10.1137/S0036142995280572}.

\bibitem{Huckle1998}
{\sc T.~Huckle}, {\em Computations with {G}ohberg-{S}emencul-type formulas for
  {T}oeplitz matrices}, Linear Algebra Appl., 273 (1998), pp.~169--198,
  \url{http://dx.doi.org/10.1016/S0024-3795(97)00372-8}.

\bibitem{Kailath1994}
{\sc T.~Kailath and J.~Chun}, {\em Generalized displacement structure for
  block-{T}oeplitz, {T}oeplitz-block, and {T}oeplitz-derived matrices}, SIAM J.
  Matrix Anal. Appl., 15 (1994), pp.~114--128,
  \url{http://dx.doi.org/10.1137/S0895479889169042}.

\bibitem{Kailath1995}
{\sc T.~Kailath and A.~H. Sayed}, {\em Displacement structure: theory and
  applications}, SIAM Rev., 37 (1995), pp.~297--386,
  \url{http://dx.doi.org/10.1137/1037082}.

\bibitem{Kailath1999}
{\sc T.~Kailath and A.~H. Sayed}, eds., {\em Fast reliable algorithms for
  matrices with structure}, Society for Industrial and Applied Mathematics
  (SIAM), Philadelphia, PA, 1999,
  \url{http://dx.doi.org/10.1137/1.9781611971354}.

\bibitem{Lee2010}
{\sc S.~T. Lee, H.-K. Pang, and H.-W. Sun}, {\em Shift-invert {A}rnoldi
  approximation to the {T}oeplitz matrix exponential}, SIAM J. Sci. Comput., 32
  (2010), pp.~774--792, \url{http://dx.doi.org/10.1137/090758064}.

\bibitem{LopezFernandez2006}
{\sc M.~L{\'o}pez-Fern{\'a}ndez, C.~Palencia, and A.~Sch{\"a}dle}, {\em A
  spectral order method for inverting sectorial {L}aplace transforms}, SIAM J.
  Numer. Anal., 44 (2006), pp.~1332--1350 (electronic),
  \url{http://dx.doi.org/10.1137/050629653}.

\bibitem{Merton1976}
{\sc R.~C. Merton}, {\em Option pricing when underlying stock returns are
  discontinuous}, Journal of Financial Economics, 3 (1976), pp.~125--144,
  \url{http://dx.doi.org/10.1016/0304-405X(76)90022-2}.

\bibitem{Ng2000}
{\sc M.~K. Ng}, {\em Preconditioned {L}anczos methods for the minimum
  eigenvalue of a symmetric positive definite {T}oeplitz matrix}, SIAM J. Sci.
  Comput., 21 (2000), pp.~1973--1986 (electronic),
  \url{http://dx.doi.org/10.1137/S1064827597330169}.

\bibitem{Olshevsky2001a}
{\sc V.~Olshevsky}, ed., {\em Structured matrices in mathematics, computer
  science, and engineering. {I}}, vol.~280 of Contemporary Mathematics,
  American Mathematical Society, Providence, RI, 2001.

\bibitem{Olshevsky2001b}
{\sc V.~Olshevsky}, ed., {\em Structured matrices in mathematics, computer
  science, and engineering. {II}}, vol.~281 of Contemporary Mathematics,
  American Mathematical Society, Providence, RI, 2001.

\bibitem{Pan1993}
{\sc V.~Pan}, {\em Decreasing the displacement rank of a matrix}, SIAM J.
  Matrix Anal. Appl., 14 (1993), pp.~118--121,
  \url{http://dx.doi.org/10.1137/0614010}.

\bibitem{Redivo2012}
{\sc M.~Redivo-Zaglia and G.~Rodriguez}, {\em {smt}: {M}atlab toolbox for
  structured matrices}, Numer. Algorithms, 59 (2012), pp.~639--659,
  \url{http://dx.doi.org/10.1007/s11075-011-9527-9}.

\bibitem{Sachs2008}
{\sc E.~W. Sachs and A.~K. Strauss}, {\em Efficient solution of a partial
  integro-differential equation in finance}, Appl. Numer. Math., 58 (2008),
  pp.~1687--1703, \url{http://dx.doi.org/10.1016/j.apnum.2007.11.002}.

\bibitem{Sayed1995}
{\sc A.~H. Sayed and T.~Kailath}, {\em A look-ahead block {S}chur algorithm for
  {T}oeplitz-like matrices}, SIAM J. Matrix Anal. Appl., 16 (1995),
  pp.~388--414, \url{http://dx.doi.org/10.1137/S0895479892232649}.

\bibitem{Trefethen2014}
{\sc L.~N. Trefethen and J.~A.~C. Weideman}, {\em The exponentially convergent
  trapezoidal rule}, SIAM Rev., 56 (2014), pp.~385--458,
  \url{http://dx.doi.org/10.1137/130932132}.

\bibitem{Xia2012}
{\sc J.~Xia, Y.~Xi, and M.~Gu}, {\em A superfast structured solver for
  {T}oeplitz linear systems via randomized sampling}, SIAM J. Matrix Anal.
  Appl., 33 (2012), pp.~837--858, \url{http://dx.doi.org/10.1137/110831982}.

\end{thebibliography}

\end{document}